\newtheorem{thm}[subsection]{Theorem}
\newtheorem{defn}[subsection]{Definition}
\newtheorem{prop}[subsection]{Proposition}
\newtheorem{cor}[subsection]{Corollary}
\newtheorem{lemma}[subsection]{Lemma}
\newtheorem{ques}[subsection]{Question}
\theoremstyle{definition}  
\newtheorem{exercise}[subsection]{Exercise}
\newtheorem{remark}[subsection]{Remark}
\newcommand{\dfn}{\textbf} 
\newcommand{\mdfn}[1]{\dfn{\mathversion{bold}#1}} 
\newcommand{\field}[1]  {\mathbb #1} 
\newcommand{\R}         {\field R}
\newcommand{\HH}        {\field H}
\newcommand{\OO}        {\field O}
\newcommand{\C}         {\field C}
\DeclareMathOperator{\Aut}{Aut}
\DeclareMathOperator{\tr}{tr}
\DeclareMathOperator{\Eig}{Eig}
\newcommand{\norm}[1]{\left| #1 \right|}       
\newcommand{\rea}[1]{|{#1}|}             
\newcommand{\map}{\rightarrow}
\newcommand{\ceck}[1]{\Cech(#1)}         
\newcommand{\oceck}[1]{\Cech^{o}(#1)}    
\newcommand{\oreal}[1]{\rea{\oceck{U}}}  
\newcommand{\creal}[1]{\rea{\ceck{U}}}   
\newcommand{\Cech}{\check{C}}
\newcommand{\llangle}{\langle\langle}
\newcommand{\rrangle}{\rangle\rangle}
\renewcommand{\Re}{\text{Re}}
\renewcommand{\Im}{\text{Im}}
\numberwithin{equation}{subsection}
\begin{document}

\title[Eigentheory of Cayley-Dickson algebras]
{Eigentheory of Cayley-Dickson algebras}

\author[D.~K.~Biss]{Daniel K. Biss}
\author[J.~D.~Christensen]{J.~Daniel Christensen}
\author[D.~Dugger]{Daniel Dugger}
\author[D.~C.~Isaksen]{Daniel C. Isaksen}

\address{Department of Mathematics\\
University of Chicago\\Chicago, IL 60637, USA}

\address{Department of Mathematics\\ 
University of Western Ontario\\
London, Ontario N6A 5B7\\
Canada}

\address{Department of Mathematics\\ University of Oregon\\ Eugene, OR
97403, USA}

\address{Department of Mathematics\\ Wayne State University\\
Detroit, MI 48202, USA}

\email{daniel@math.uchicago.edu}

\email{jdc@uwo.ca}

\email{ddugger@math.uoregon.edu}

\email{isaksen@math.wayne.edu}

\begin{abstract}
We show how eigentheory clarifies many algebraic properties of
Cayley-Dickson algebras.  These notes are intended as background material
for those who are studying this eigentheory more closely.
\end{abstract}

\thanks{MSC: 17A99, 17D99}

\maketitle


\section{Introduction}

Cayley-Dickson algebras are non-associative finite-dimensional
$\R$-algebras that generalize the real numbers, the complex
numbers, the quaternions, and the octonions.  This paper is part of 
a sequence, including \cite{DDD} and \cite{DDDD}, that explores
some detailed algebraic properties of these algebras.

Classically, the first four Cayley-Dickson algebras, i.e.,
$\R$, $\C$, $\HH$, and $\OO$, are viewed as well-behaved, while
the larger Cayley-Dickson algebras are considered to be pathological.
There are several different ways of making this distinction.  One
difference is that the first four algebras do not possess zero-divisors,
while the higher algebras do have zero-divisors.  One of our primary
long-term goals is to understand the zero-divisors in as much detail
as possible; the papers \cite{DDD} and \cite{DDDD} more directly
address this question.  Our motivation for studying zero-divisors
is the potential for useful applications in topology; see
\cite{Co} for more details.

A different but related 
important property of the first four Cayley-Dickson algebras
is that they are {\em alternative}.  This means that $a \cdot ax = a^2 x$
for all $a$ and all $x$.  This is obvious for the associative algebras
$\R$, $\C$, and $\HH$.  It is also true for the octonions.  One 
important consequence of this fact is that it allows for the construction
of the projective line and plane over $\OO$ \cite{B}.

Alternativity fails in the higher Cayley-Dickson algebras; there exist
$a$ and $x$ such that $a \cdot ax$ does not equal $a^2 x$.  
Because alternativity
is so fundamental to the lower Cayley-Dickson algebras, it makes sense
to explore exactly how alternativity fails.  

For various technical reasons that will be apparent later, it turns out
to be inconvenient to consider the operator $L_a^2$, where
$L_a$ is left multiplication by $a$.  Rather, it is preferable to
study the operator $M_a = \frac{1}{\norm{a}^2} L_{a^*} L_a$, 
where $\norm{a}$ is the norm
of $a$ and $a^*$ is the conjugate of $a$.  We will show that $M_a$
is diagonalizable over $\R$.  Moreover, its eigenvalues are non-negative.

Thus we are led to consider the eigentheory of $M_a$.  Given $a$,
we desire to describe the eigenvalues and eigenspaces of $M_a$ in
as much detail as possible.  

This approach to Cayley-Dickson algebras was begun in \cite{MG}.  
However, for completeness, we have reproved
everything that we need here.  

Although the elegance of our results about the eigentheory of $M_a$ speaks
for itself, we give a few reasons why this viewpoint on Cayley-Dickson
algebras is useful.  First, it is possible to completely
classify all subalgebras of the 16-dimensional Cayley-Dickson algebra.
We do not include a proof here because these subalgebras
are classified in \cite{CD}.
On the subject of subalgebras of Cayley-Dickson algebras, 
the article \cite{Ca} is worth noting.

Second, eigentheory supplies one possible solution to the
{\em cancellation problem}.  Namely, given $a$ and $b$, is it possible
to find $x$ such that $ax = b$?  The problem is a technical but
essential idea in \cite[Section 6]{DDDD}.

With alternativity, one can multiply
this equation by $a^*$ on the left and compute that $\norm{a}^2 x = a^*b$.  
Since $\norm{a}^2$ 
is a non-zero real number for any non-zero $a$, this determines
$x$ explicitly.

Now we explain how to solve the equation $ax = b$ without alternativity.
Write $x = \sum x_i$
and $b = \sum b_i$,
where $b_i$ and $x_i$ belong to the $\lambda_i$-eigenspace
of $M_a$.  Multiply on the left by $a^*$ to obtain $a^* \cdot ax = a^*b$,
which can be rewritten as $\norm{a}^2 \sum \lambda_i x_i = \sum a^* b_i$.
As long as none of the eigenvalues $\lambda_i$ are zero, each $x_i$ 
equals $\frac{1}{\lambda_i \norm{a}^2}a^* b_i$,
and therefore $x$ can be recovered.
We expect problems with cancellation
when one of the eigenvalues is zero; this corresponds to the fact that
if $a$ is a zero-divisor, then the cancellation problem might have no
solution or might have non-unique solutions.

We would like to draw the reader's attention to a number of open
questions in Section \ref{sctn:ques}.

\subsection{Conventions}
This paper is not intended to stand independently.  
In particular,
we rely heavily on background from \cite{DDD}.
Section \ref{sctn:CD}
reviews the main points that we will use.

\section{Cayley-Dickson algebras}
\label{sctn:CD}

The \mdfn{Cayley-Dickson algebras} are a sequence of non-associative
$\R$-algebras with involution.  
See \cite{DDD} for a full explanation of the basic properties of
Cayley-Dickson algebras.

These algebras are defined inductively.
We start by defining \mdfn{$A_0$} to be $\R$.  Given $A_{n-1}$, the algebra
\mdfn{$A_n$} is defined additively to be $A_{n-1} \times A_{n-1}$.
Conjugation in $A_n$ is defined by
\[
(a,b)^* = (a^*,-b),
\]
and multiplication is defined by
\[
(a,b)(c,d) = (ac - d^*b, da + bc^*).
\]

One can verify directly from the definitions that $A_1$ is isomorphic
to $\C$; $A_2$ is isomorphic to $\HH$; and $A_3$ is isomorphic to
the octonions $\OO$.  The next algebra $A_4$ is 16-dimensional; it is
sometimes called the {\em hexadecanions} or the {\em sedenions}.

We implicitly view $A_{n-1}$ as the subalgebra $A_{n-1} \times 0$ of $A_n$.

\subsection{Complex structure}

The element \mdfn{$i_n$} $ =(0,1)$ of $A_n$ 
enjoys many special properties.  One of the primary themes of our long-term
project is to fully exploit these special properties.

Let \mdfn{$\C_n$} be 
the $\R$-linear span of $1 = (1,0)$ and $i_n$.  It is a subalgebra of $A_n$
that is isomorphic to $\C$.
An easy consequence of \cite[Lem.~5.5]{DDD} is that $a^*(ai_n) = (a^* a)i_n$
for all $a$ in $A_n$.

\begin{lemma}[DDD, Prop.~5.3]
\label{lem:C-vs}
Under left multiplication, $A_n$ is a $\C_n$-vector space.  In particular,
if $\alpha$ and $\beta$ belong to $\C_n$ and $x$ belongs to $A_n$, then
$\alpha(\beta x) = (\alpha \beta) x$.
\end{lemma}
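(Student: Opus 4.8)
The plan is to check the $\C_n$-module axioms one at a time and observe that all but one of them come for free from the ring (indeed $\R$-algebra) structure on $A_n$. Multiplication in $A_n$ is $\R$-bilinear, so for each fixed scalar $\alpha\in\C_n$ left multiplication $L_\alpha\colon x\mapsto \alpha x$ is $\R$-linear, and $\alpha\mapsto L_\alpha$ is itself additive; combined with the fact that $1=(1,0)$ is the two-sided identity, this immediately gives additivity in the vector variable, additivity in the scalar variable, and the unit axiom $1\cdot x = x$. The only axiom with any content is associativity of the scalar action, $\alpha(\beta x) = (\alpha\beta)x$, and this is where I would focus the argument.

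To prove $\alpha(\beta x) = (\alpha\beta)x$ for $\alpha,\beta\in\C_n$ and $x\in A_n$, the key observation is that both sides are $\R$-bilinear in the pair $(\alpha,\beta)$, so it suffices to verify the identity when $\alpha$ and $\beta$ each range over the $\R$-basis $\{1,i_n\}$ of $\C_n$. The three cases in which $\alpha$ or $\beta$ equals $1$ are trivial, so everything reduces to the single identity $i_n(i_n x) = (i_n^2)\,x$.

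For this last point I would compute both sides directly from the inductive formulas. Write $x=(p,q)$ with $p,q\in A_{n-1}$. Since $i_n=(0,1)$, the multiplication rule $(a,b)(c,d)=(ac-d^*b,\,da+bc^*)$ gives $i_n x = (-q^*, p^*)$, and applying it once more gives $i_n(i_n x) = (-p,-q) = -x$, using that conjugation on $A_{n-1}$ is an involution. The same rule gives $i_n^2 = (-1,0) = -1$ in $A_n$, so $(i_n^2)\,x = -x$ as well; the two sides agree, and we are done.

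I do not expect a genuine obstacle here: once the bilinearity reduction is in place the argument is essentially a one-line computation. The only things to watch are the bookkeeping in the twisted product formula and the fact that $*$ squares to the identity on each $A_{n-1}$, both of which are straightforward inductions on $n$.
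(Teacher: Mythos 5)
The paper itself does not prove this lemma; it is stated with the citation [DDD, Prop.~5.3] and no argument is given, so there is no in-paper proof to compare against. Your proof is correct and self-contained. The reduction by $\R$-bilinearity of both sides of $\alpha(\beta x)=(\alpha\beta)x$ to the single identity $i_n(i_n x)=i_n^2\,x$ is exactly the right move, and the direct computation from the doubling formula $(a,b)(c,d)=(ac-d^*b,\,da+bc^*)$ checks out: with $x=(p,q)$ one gets $i_n x=(-q^*,p^*)$, then $i_n(i_n x)=(-p^{**},-q^{**})=(-p,-q)=-x$, while $i_n^2=(-1,0)=-1$, so both sides are $-x$. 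The two background facts you invoke --- $\R$-bilinearity of the product on $A_n$ and that $*$ is an involution --- are both immediate inductions from the defining formulas and are part of the standard setup recalled in Section~\ref{sctn:CD}, so there is no gap. The only stylistic remark is that, since $L_\alpha$ and $R_\alpha$ are $\R$-linear, the module axioms other than scalar associativity are automatic as you say, so spelling them out is optional; the substance of the lemma really is the one computation $i_n(i_n x)=-x$.
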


As a consequence, the expression $\alpha \beta x$ is unambiguous;
we will usually simplify notation in this way.

The \mdfn{real part $\Re(x)$} of an element $x$ of $A_n$ is defined to
be $\frac{1}{2}(x + x^*)$, while the 
\mdfn{imaginary part $\Im(x)$} is defined to be $x - \Re(x)$.

The algebra $A_n$ becomes a positive-definite real inner product space
when we define \mdfn{$\langle a, b \rangle_{\R}$} $= \Re(ab^*)$
\cite[Prop.~3.2]{DDD}.
Also, $A_n$ becomes a positive-definite Hermitian inner product space
when we define \mdfn{$\langle a, b \rangle_{\C}$}
to be the orthogonal projection of $ab^*$ onto the subspace $\C_n$ of $A_n$
\cite[Prop.~6.3]{DDD}.
We say that two elements $a$ and $b$ are \mdfn{$\C$-orthogonal}
if $\langle a, b \rangle_{\C} = 0$.

For any $a$ in $A_n$, let \mdfn{$L_a$} and \mdfn{$R_a$} be
the linear maps $A_n \map A_n$ given by left and right multiplication
by $a$ respectively.

\begin{lemma}[M1, Lem.~1.3, DDD, Lem.~3.4]
\label{lem:adjoint}
Let $a$ be any element of $A_n$.  With respect to the real inner product
on $A_n$, 
the adjoint of $L_a$ is $L_{a^*}$, and the
adjoint of $R_a$ is $R_{a^*}$.
\end{lemma}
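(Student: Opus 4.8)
The plan is to verify the adjoint relations directly from the defining formula for the real inner product, $\langle a,b\rangle_\R=\Re(ab^*)$, together with two elementary facts about conjugation in $A_n$: that conjugation is an anti-involution, i.e. $(xy)^*=y^*x^*$ and $x^{**}=x$, and that $\Re(x)=\Re(x^*)$. Both of these are standard consequences of the inductive definition of $A_n$; I would cite \cite{DDD} for them, or note that they follow from an easy induction on $n$. The key computational identity I want is $\Re(xy)=\Re(yx)$ for all $x,y\in A_n$, which follows since $\Re(xy)=\Re((xy)^*)=\Re(y^*x^*)$ and one checks $\Re(y^*x^*)=\Re(yx)$ after expanding into real and imaginary parts (the imaginary part of $y^*x^*$ is minus the imaginary part of $xy$ read in reverse, but the real parts agree). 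So the first step is to assemble these background facts.

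Next I would compute the adjoint of $L_a$. For $x,y\in A_n$ we have
\[
\langle L_a x, y\rangle_\R = \langle ax, y\rangle_\R = \Re\bigl((ax)y^*\bigr).
\]
Using $\Re(uv)=\Re(vu)$ with $u=a$ and $v=xy^*$, this equals $\Re\bigl((xy^*)a\bigr)$; then reassociating — here one must be slightly careful, since $A_n$ is non-associative, but $\Re$ only sees the real part and the triple-product identities needed are exactly the flexibility/alternativity-type relations that hold in $A_n$ at the level of real parts (again from \cite{DDD}) — one rewrites $\Re\bigl((xy^*)a\bigr)=\Re\bigl(x(y^*a)\bigr)=\Re\bigl(x(a^*y)^*\bigr)=\langle x, a^*y\rangle_\R=\langle x, L_{a^*}y\rangle_\R$. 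This shows $L_a^*=L_{a^*}$. The argument for $R_a$ is parallel: $\langle R_a x,y\rangle_\R=\Re\bigl((xa)y^*\bigr)$, and after the same real-part manipulations one extracts $\langle x, ya^*\rangle_\R=\langle x, R_{a^*}y\rangle_\R$.

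The step I expect to be the main obstacle is the handling of non-associativity: every reassociation above ($\Re((xy^*)a)=\Re(x(y^*a))$, etc.) needs justification, since $(xy^*)a\ne x(y^*a)$ in general for $n\ge 4$. The resolution is that the quantity $\Re(x\cdot yz)-\Re(xy\cdot z)$ vanishes for all $x,y,z$ in any $A_n$ — equivalently, $\langle xy,z\rangle_\R=\langle x,zy^*\rangle_\R$ is an honest identity — and this is precisely the content of \cite[Lem.~3.4]{DDD}, which is already cited in the statement. So in practice the cleanest route is to take that associator-of-real-parts identity (or the equivalent "moving factors across the inner product" rule) as the single input from \cite{DDD}, and then both adjoint formulas drop out in two lines each. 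I would therefore organize the final write-up as: (1) recall the identity $\langle xy,z\rangle_\R=\langle x,zy^*\rangle_\R$ and its mirror $\langle yx,z\rangle_\R=\langle x,y^*z\rangle_\R$ from \cite{DDD}; (2) set $y=a$ (resp. read off the mirror form) to conclude $L_a^*=L_{a^*}$ and $R_a^*=R_{a^*}$ immediately.
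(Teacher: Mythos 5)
The paper does not prove this lemma: the bracketed attribution \emph{is} the proof, a citation to \cite[Lem.~1.3]{M1} and \cite[Lem.~3.4]{DDD}, and the text moves on immediately. Your ultimate decision to cite \cite[Lem.~3.4]{DDD} therefore matches the paper's treatment exactly.

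However, the direct computation you sketch beforehand is circular, and it is worth seeing precisely why. Every reassociation you perform, e.g.\ $\Re\bigl((xy^*)a\bigr)=\Re\bigl(x(y^*a)\bigr)$, appeals to the identity $\Re\bigl((xy)z\bigr)=\Re\bigl(x(yz)\bigr)$. But that identity is \emph{equivalent} to the lemma: $\Re\bigl((xy)z\bigr)=\langle xy,z^*\rangle_\R$ and $\Re\bigl(x(yz)\bigr)=\langle x,(yz)^*\rangle_\R=\langle x,z^*y^*\rangle_\R$, so real-part associativity is exactly $R_y^*=R_{y^*}$, and (after a cyclic rotation) $L_y^*=L_{y^*}$ as well. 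Your proposed ``step (1)'' — recalling $\langle xy,z\rangle_\R=\langle x,zy^*\rangle_\R$ and $\langle yx,z\rangle_\R=\langle x,y^*z\rangle_\R$ from \cite{DDD} — is just the lemma with $a$ renamed $y$, so ``step (2): set $y=a$'' is a renaming, not an argument. There is also a small misapplication at the outset: from $\Re\bigl((ax)y^*\bigr)$ you cannot apply $\Re(uv)=\Re(vu)$ with $u=a$, $v=xy^*$, since the expression is not $\Re\bigl(a(xy^*)\bigr)$; you would first have to reassociate, which is again the content of the lemma. If a genuine from-scratch proof is wanted rather than a citation, the standard route is induction on $n$: write $a=(a_1,a_2)$, $x=(x_1,x_2)$, $y=(y_1,y_2)$, expand $\langle ax,y\rangle_\R$ and $\langle x,a^*y\rangle_\R$ using the doubling formula $(a_1,a_2)(x_1,x_2)=(a_1x_1-x_2^*a_2,\,x_2a_1+a_2x_1^*)$, and match terms using the inductive hypothesis for \emph{both} $L$ and $R$ in $A_{n-1}$ together with the fact that conjugation is an isometry; this proves the $L$ and $R$ statements simultaneously and is presumably the argument in \cite[Lem.~3.4]{DDD}.
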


We will need the following slightly technical result.

\begin{lemma}
\label{lem:ortho1}
Let $x$ and $y$ be elements of $A_n$ such that $y$ is imaginary.  Then 
$x$ and $xy$ are orthogonal.
\end{lemma}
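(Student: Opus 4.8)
The plan is to reduce the claim to a computation with the real inner product $\langle\,,\,\rangle_{\R}$ and then use the adjoint property of $L_x$ from Lemma~\ref{lem:adjoint}. We want to show $\langle x, xy\rangle_{\R} = 0$. Since $xy = L_x(y)$, the adjoint property gives $\langle x, xy\rangle_{\R} = \langle x, L_x y\rangle_{\R} = \langle L_{x^*} x, y\rangle_{\R} = \langle x^* x, y\rangle_{\R}$. So everything comes down to showing that $x^* x$ is orthogonal to every imaginary element $y$, which is equivalent to saying $x^* x$ is real, i.e., $x^* x \in A_0 = \R$.

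The key fact, then, is that $x^* x$ is always a (non-negative) real number — indeed $x^* x = \norm{x}^2$. This is a standard property of Cayley-Dickson algebras, presumably recorded in \cite{DDD}; if I needed to prove it from the recursive definition, I would compute $(a,b)^*(a,b) = (a^*,-b)(a,b) = (a^* a - b^* b, ba^* - ba^*)$, wait, more carefully using the multiplication formula $(a,b)(c,d) = (ac - d^* b, da + b c^*)$ with $(c,d) = (a,b)$: the first coordinate is $a^* a - b^*(-b) = a^* a + b^* b$ and the second coordinate is $ba^* + (-b)(a^*)^{**}$... I would track the signs carefully, but the upshot by induction on $n$ is that $x^* x$ lies in $A_0 = \R$ and equals $\norm{x}^2 \ge 0$. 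Granting this, $x^* x$ is real, hence its imaginary part is zero, hence $\langle x^* x, y\rangle_{\R} = \Re\bigl((x^* x)\, y^*\bigr) = (x^* x)\,\Re(y^*) = (x^* x)\cdot\bigl(-\Re(y)\bigr) = 0$ since $y$ is imaginary. Chaining back through the adjoint computation gives $\langle x, xy\rangle_{\R} = 0$.

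The main obstacle, such as it is, is bookkeeping: making sure the adjoint of $L_x$ is $L_{x^*}$ is applied in the correct direction (it is, by Lemma~\ref{lem:adjoint}), and confirming that $x^* x$ really is real rather than merely having real part equal to $\norm{x}^2$ — the latter would not suffice, since we need $\Im(x^* x) = 0$ outright. I expect this to be clean because the realness of $x^* x$ is a genuinely elementary feature of the Cayley-Dickson construction, available from the cited background. One alternative, if one preferred to avoid invoking $x^* x \in \R$, would be to expand $\langle x, xy\rangle_{\R} = \Re\bigl(x (xy)^*\bigr) = \Re\bigl(x (y^* x^*)\bigr)$ and manipulate directly, but this runs into non-associativity and seems strictly more painful than the adjoint route, so I would not pursue it.
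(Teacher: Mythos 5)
Your proof is correct and follows essentially the same route as the paper's: apply the adjoint property from Lemma~\ref{lem:adjoint} to rewrite $\langle x, xy\rangle_{\R}$ as $\langle x^*x, y\rangle_{\R}$, then observe that this vanishes because $x^*x$ is real while $y$ is imaginary. The side fact you worried about, that $x^*x = \norm{x}^2$ is a genuine real number (not merely an element with real part $\norm{x}^2$), is exactly what the paper cites from \cite[Lem.~3.6]{DDD}, so your reliance on it is well-founded.
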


\begin{proof}
We wish to show that $\langle x, xy \rangle_{\R}$ is zero.
By Lemma \ref{lem:adjoint}, this equals
$\langle x^* x, y \rangle_{\R}$, which is zero because
$x^* x$ is real while $y$ is imaginary.
\end{proof}

We will frequently consider the subspace
\mdfn{$\C_n^\perp$} of $A_n$; it is the orthogonal complement of $\C_n$
(with respect either to the real or to the Hermitian inner product).
Note that $\C_n^\perp$ is a $\C_n$-vector space; in other words,
if $a$ belongs to $\C_n^\perp$ and $\alpha$ belongs to $\C_n$,
then $\alpha a$ also belongs to $\C_n^\perp$ \cite[Lem.~3.8]{DDD}.

\begin{lemma}[DDD, Lem.~6.4 and 6.5]
\label{lem:C-conj-linear}
If $a$ belongs to $\C_n^\perp$, then $L_a$ is $\C_n$-conjugate-linear
in the sense that $L_a(\alpha x) = \alpha^* L_a(x)$ for any 
$x$ in $A_n$ and any $\alpha$ in $\C_n$.
Moreover, $L_a$ is anti-Hermitian in the sense that
$\langle L_a x, y \rangle_{\C} = -\langle x, L_a y \rangle_{\C}^*$.
\end{lemma}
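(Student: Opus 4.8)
The claim is Lemma~\ref{lem:C-conj-linear}: for $a \in \C_n^\perp$, the operator $L_a$ is $\C_n$-conjugate-linear and anti-Hermitian with respect to $\langle\,,\,\rangle_\C$. Since the lemma is attributed to \cite[Lem.~6.4 and 6.5]{DDD}, the plan is to give a self-contained argument from the facts already assembled in this excerpt. It suffices to prove conjugate-linearity for the single element $\alpha = i_n$; the general case $\alpha = s + t i_n$ with $s,t \in \R$ then follows by $\R$-linearity of $L_a$ and of conjugation on $\C_n$. So the first step is to establish $a(i_n x) = i_n^* (a x) = -i_n(ax)$ for all $x \in A_n$ and all $a \in \C_n^\perp$.

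For that step I would work at the level of the defining multiplication formula on $A_n = A_{n-1}\times A_{n-1}$, using $i_n = (0,1)$. Writing $a = (a_1,a_2)$ and $x = (x_1,x_2)$, one computes $i_n x = (0,1)(x_1,x_2) = (-x_2^*, x_1)$ and then expands both $a(i_n x)$ and $i_n(ax)$ via $(a,b)(c,d) = (ac - d^*b, da + bc^*)$. The condition $a \in \C_n^\perp$ should enter through a constraint on $(a_1,a_2)$ — being orthogonal to both $1=(1,0)$ and $i_n=(0,1)$ forces $\Re(a_1) = 0$ and (using $\langle a, i_n\rangle_\R = \Re(a\, i_n^*) = \Re((a_1,a_2)(0,-1))$) a condition that makes $a_1$ imaginary. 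Alternatively, and more cleanly, I would avoid coordinates: note that $\C_n^\perp = \{a : a^* = -a \text{ and } a \perp i_n\}$, i.e.\ $a$ is imaginary and $\C$-orthogonal to $1$; then use the already-quoted identity $a^*(a i_n) = (a^*a) i_n$ together with bilinear-identity manipulations (polarization of that identity in $a$) to pin down how $L_a$ interacts with $L_{i_n}$. The cleanest route is probably: for $a,b \in \C_n^\perp$, polarize $a^*(ai_n)=(a^*a)i_n$ to get $a^*(b i_n) + b^*(a i_n) = (a^*b + b^*a) i_n$, and combine with Lemma~\ref{lem:adjoint} to extract $a(i_n x) = -i_n(a x)$.

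Granting conjugate-linearity, the anti-Hermitian property follows formally. By definition $\langle u, v\rangle_\C$ is the projection of $uv^*$ onto $\C_n$, and $\langle v,u\rangle_\C = \langle u,v\rangle_\C^*$ since $(uv^*)^* = v u^*$ and conjugation on $A_n$ restricts to complex conjugation on $\C_n$. Now compute, using Lemma~\ref{lem:adjoint} for the \emph{real} inner product together with the fact that $\langle\,,\,\rangle_\C$ is the $\C_n$-component and $\langle\,,\,\rangle_\R = \Re\langle\,,\,\rangle_\C$: we have $\langle L_a x, y\rangle_\C = \langle x, L_{a^*} y\rangle_\C = \langle x, L_{-a} y\rangle_\C = -\langle x, L_a y\rangle_\C$ — wait, that would give anti-Hermitian without a conjugate, so the adjoint identity for $\langle\,,\,\rangle_\C$ cannot simply be $L_a \mapsto L_{a^*}$; rather one must track that passing $L_a$ across the Hermitian form introduces a conjugate because $L_a$ is conjugate-linear. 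Concretely: $\langle L_a x,\alpha y\rangle_\C = \langle L_a x, y\rangle_\C \cdot \alpha^*$ while $\langle x, L_a(\alpha y)\rangle_\C = \langle x, \alpha^* L_a y\rangle_\C = \alpha \langle x, L_a y\rangle_\C$ — checking consistency of both sides under scaling shows the correct relation is $\langle L_a x, y\rangle_\C = -\langle x, L_a y\rangle_\C^*$, which is exactly the asserted anti-Hermitian identity. I would verify it by writing $\langle L_a x, y\rangle_\C$ in terms of the real form on $\C_n$-coordinates and applying Lemma~\ref{lem:adjoint} componentwise.

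The main obstacle is the first step — showing $a(i_n x) = -i_n(ax)$ for $a\in\C_n^\perp$ — since everything else is formal bookkeeping with inner products. That identity is genuinely a computation in the Cayley–Dickson multiplication (it is false for general $a$; alternativity-type identities are exactly what fail in high dimensions), so the argument must use the hypothesis $a \perp \C_n$ essentially. I expect the slickest proof uses the quoted consequence of \cite[Lem.~5.5]{DDD}, namely $a^*(ai_n)=(a^*a)i_n$, polarized and combined with the adjointness of $L_a$; if that does not close cleanly, falling back to the explicit coordinate formula $(a,b)(c,d) = (ac-d^*b, da+bc^*)$ with $a=(a_1,a_2)$, $\Re(a_1)=0$, will finish it by direct expansion.
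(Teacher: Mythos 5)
First, a framing point: the paper gives no proof of this lemma at all --- it is cited as \cite[Lem.~6.4 and 6.5]{DDD} --- so there is no ``paper proof'' to compare against, only the question of whether your attempt would actually close.

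Your overall plan is the right one: reduce conjugate-linearity to the single anticommutation $L_a L_{i_n} = -L_{i_n}L_a$ for $a\in\C_n^\perp$, then obtain the anti-Hermitian identity formally. But the central step is never actually proved, and your preferred route has a genuine gap. The polarization of $a^*(ai_n)=(a^*a)i_n$ in $a$ gives
\[
a^*(b\,i_n)+b^*(a\,i_n)=(a^*b+b^*a)\,i_n,
\]
which constrains the operators $L_{a^*}L_b$ only when evaluated at the single vector $i_n$; it is an identity among elements of $A_n$, not among operators. Substituting $b=x$ produces $a^*(x\,i_n)+x^*(a\,i_n)=(a^*x+x^*a)i_n$, which involves $x\,i_n$ and $x^*$ rather than $i_n x$ and $x$, and there is no visible way to convert this into $a(i_n x)=-i_n(ax)$ for arbitrary $x$ without already knowing the very fact you are trying to prove. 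Lemma~\ref{lem:adjoint} pairs vectors, it does not turn a pointwise identity into an operator identity, so ``combine with Lemma~\ref{lem:adjoint} to extract $a(i_n x)=-i_n(ax)$'' is not a step but a wish.

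The fallback coordinate computation, by contrast, does work and is the honest proof here, so you should carry it out rather than defer it. Note two things. (i) The membership $a=(a_1,a_2)\in\C_n^\perp$ unwinds cleanly: orthogonality to $1$ forces $a_1$ imaginary, and $a\,i_n^*=(a_2,-a_1)$ shows orthogonality to $i_n$ forces $a_2$ imaginary; those two imaginarity conditions are exactly what make the direct expansion of $a(i_n x)$ and $-i_n(ax)$ agree. (ii) Your stated value $i_n x=(-x_2^*,x_1)$ is wrong; from $(a,b)(c,d)=(ac-d^*b,\ da+bc^*)$ one gets $i_n x=(0,1)(x_1,x_2)=(-x_2^*,\ x_1^*)$, and the missing conjugate is needed for the terms to cancel. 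Finally, your anti-Hermitian paragraph is a plan rather than a derivation; the clean version is to write $\langle u,v\rangle_\C=\langle u,v\rangle_\R+\langle u,i_nv\rangle_\R\,i_n$, use the $\R$-adjoint $L_a\mapsto L_{a^*}=-L_a$ together with the now-proved anticommutation $a(i_n y)=-i_n(ay)$, and compare directly with $-\langle x,L_ay\rangle_\C^*$; the signs then match without any ``consistency under scaling'' heuristics.
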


Similar results hold for $R_a$.  See also \cite[Lem.~2.3]{MG} for
a different version of the claim about conjugate-linearity.

The conjugate-linearity of $L_a$ is fundamental to many later calculations.
To emphasize this point, we provide a few exercises.

\begin{exercise}
\label{ex:multiply}
Suppose that $a$ and $b$ belong to $\C_n^\perp$, while $\alpha$
belongs to $\C_n$.  Show that:
\begin{enumerate}
\item
$\alpha a = a \alpha^*$.  
\item
$a \cdot \alpha b = \alpha^* \cdot ab$.
\item
$\alpha a \cdot b = ab \cdot \alpha$.
\end{enumerate}
\end{exercise}

\begin{exercise}
\label{ex:multiply2}
Let $a$ and $b$ belong to $\C_n^\perp$, and let $\alpha$ and $\beta$
belong to $\C_n$.  Suppose also that $a$ and $b$ are $\C$-orthogonal.
Prove that
\[
\alpha a \cdot \beta b = \alpha^* \beta^* \cdot ab.
\]
In this limited sense, multiplication is bi-conjugate-linear.
\end{exercise}

\subsection{Norms}

Norms of elements in $A_n$ are defined with respect to either the
real or Hermitian inner
product: $\norm{a} = \sqrt{\langle a, a\rangle_{\R}} =
\sqrt{\langle a, a\rangle_{\C}} = \sqrt{ aa^*}$; this makes sense
because $aa^*$ is always a non-negative real number \cite[Lem.~3.6]{DDD}.
Note also that $\norm{a} = \norm{a^*}$ for all $a$.

\begin{lemma}
\label{lem:C-norm}
If $a$ belongs to $\C_n^\perp$ and $\alpha$ belongs to $\C_n$,
then $\norm{\alpha a} = \norm{\alpha} \norm{a}$.
\end{lemma}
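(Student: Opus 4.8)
The plan is to compute $\norm{\alpha a}^2$ via the real inner product, use the adjoint formula of Lemma~\ref{lem:adjoint} to move one factor of $\alpha$ across, and then recognize the leftover scalar as $\norm{\alpha}^2$. Concretely, I would start from $\norm{\alpha a}^2 = \langle L_\alpha a, L_\alpha a\rangle_{\R}$ and use that the adjoint of $L_\alpha$ is $L_{\alpha^*}$ to rewrite this as $\langle a, L_{\alpha^*} L_\alpha a\rangle_{\R} = \langle a, \alpha^*(\alpha a)\rangle_{\R}$.

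Next I would apply Lemma~\ref{lem:C-vs}: since $\alpha$ and $\alpha^*$ both lie in $\C_n$, we have $\alpha^*(\alpha a) = (\alpha^*\alpha) a$. The key observation is then that $\alpha^*\alpha$ is the nonnegative real number $\norm{\alpha}^2$; indeed $\C_n$ is commutative, being isomorphic to $\C$, so $\alpha^*\alpha = \alpha\alpha^* = \norm{\alpha}^2$ by the definition of the norm. Since this scalar is real, it pulls out of the real-bilinear inner product, giving $\norm{\alpha a}^2 = \norm{\alpha}^2 \langle a, a\rangle_{\R} = \norm{\alpha}^2 \norm{a}^2$, and taking square roots finishes the argument.

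I do not expect a genuine obstacle here. The only points that need care are that the re-bracketing $\alpha^*(\alpha a) = (\alpha^*\alpha)a$ is exactly what Lemma~\ref{lem:C-vs} licenses (one must not silently invoke general associativity), and that $\alpha^*\alpha$ is literally the real scalar $\norm{\alpha}^2$, so that it may be extracted from $\langle-,-\rangle_{\R}$. It is worth noting that this argument does not use the hypothesis $a \in \C_n^\perp$ at all; if one wanted a proof that does, one could instead write $\alpha = s + t i_n$ with $s,t \in \R$, use Exercise~\ref{ex:multiply}(1) and Lemma~\ref{lem:ortho1} to see that $a$ and $i_n a$ are orthogonal, check $\norm{i_n a} = \norm{a}$ by the same adjoint computation, and conclude by the Pythagorean theorem that $\norm{\alpha a}^2 = (s^2+t^2)\norm{a}^2 = \norm{\alpha}^2\norm{a}^2$.
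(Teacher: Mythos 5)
Your proof is correct, and it takes a slightly different route from the paper's. The paper works with the Hermitian inner product $\langle-,-\rangle_{\C}$ and cites Lemmas~\ref{lem:C-vs} and \ref{lem:C-conj-linear}, computing $\langle\alpha a,\alpha a\rangle_{\C} = \langle\alpha^*\alpha a,a\rangle_{\C} = \norm{\alpha}^2\norm{a}^2$. You instead use the real inner product together with the adjoint identity $L_\alpha^* = L_{\alpha^*}$ from Lemma~\ref{lem:adjoint}, and then Lemma~\ref{lem:C-vs} to collapse $\alpha^*(\alpha a)$ to $(\alpha^*\alpha)a = \norm{\alpha}^2 a$. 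Both arguments are really the same manipulation, but your version has the pleasant feature (which you correctly point out) that it never uses the hypothesis $a\in\C_n^\perp$: Lemma~\ref{lem:adjoint} and Lemma~\ref{lem:C-vs} are stated for arbitrary elements of $A_n$, so your argument proves $\norm{\alpha a} = \norm{\alpha}\,\norm{a}$ for every $a\in A_n$ and $\alpha\in\C_n$. The paper's route, by contrast, goes through the conjugate-linear/anti-Hermitian structure of Lemma~\ref{lem:C-conj-linear}, which is only available when $a\in\C_n^\perp$. Your alternative Pythagorean argument via Exercise~\ref{ex:multiply}(1) and Lemma~\ref{lem:ortho1} is also sound, and is a nice concrete way to see the same fact when $a\in\C_n^\perp$, though it is more work than either main proof.
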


\begin{proof}
By Lemmas \ref{lem:C-vs} and \ref{lem:C-conj-linear},
\[
\langle \alpha a, \alpha a \rangle_{\C} = 
\langle \alpha^* \alpha a, a \rangle_{\C} =
\langle \norm{\alpha}^2 a, a \rangle_{\C} =
\norm{\alpha}^2 \norm{a}^2.
\]
\end{proof}

\begin{lemma}
\label{lem:norm-comm}
For any $x$ and $y$ in $A_n$, 
$\norm{xy} = \norm{xy^*}$ and $\norm{xy} = \norm{yx}$.
\end{lemma}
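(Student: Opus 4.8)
The plan is to reduce both claimed identities to the single fact that $\norm{z}^2 = zz^* = z^*z$ for all $z$ in $A_n$, together with the identity $\norm{z} = \norm{z^*}$ and the product-conjugation rule $(xy)^* = y^* x^*$ from \cite{DDD}. For the first identity $\norm{xy} = \norm{xy^*}$, I would compute $\norm{xy}^2 = (xy)(xy)^*$ and then use the identity $\norm{wz}^2 = \norm{w}^2\norm{z}^2$ when one of the factors lies in $\C_n$ or, more robustly, fall back on the Moufang-type identity $(xy)(y^*x^*)$. The cleanest route: by definition $\norm{xy}^2 = \langle xy, xy\rangle_{\R} = \langle x, (xy)y^*\rangle_{\R}$ is not quite right because right multiplication is not associative; instead use Lemma \ref{lem:adjoint} in the form $\langle xy, xy\rangle_{\R} = \langle x \cdot y, x \cdot y\rangle_{\R}$ and rewrite via the left-adjoint: $\langle L_x y, L_x y\rangle_{\R} = \langle L_{x^*}L_x y, y\rangle_{\R}$. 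So $\norm{xy}^2 = \langle L_{x^*}L_x y, y\rangle_{\R}$ and similarly $\norm{xy^*}^2 = \langle L_{x^*}L_x y^*, y^*\rangle_{\R}$. Thus it suffices to show that the self-adjoint operator $T = L_{x^*}L_x$ satisfies $\langle Ty, y\rangle_{\R} = \langle Ty^*, y^*\rangle_{\R}$ for all $y$, i.e. that conjugation $y \mapsto y^*$ preserves the quadratic form associated to $T$.

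For that, I would decompose $y = \Re(y) + \Im(y)$, so $y^* = \Re(y) - \Im(y)$, and expand $\langle Ty, y\rangle_{\R} - \langle Ty^*, y^*\rangle_{\R} = 2\langle T\Re(y), \Im(y)\rangle_{\R} + 2\langle T\Im(y), \Re(y)\rangle_{\R} = 4\langle T\Re(y), \Im(y)\rangle_{\R}$ using self-adjointness of $T$. Since $\Re(y)$ is a real multiple of $1$, $T\Re(y) = \Re(y)\cdot x^* x$, which is again real (as $x^* x = \norm{x}^2$), hence orthogonal to the imaginary element $\Im(y)$. Therefore the difference vanishes, giving $\norm{xy} = \norm{xy^*}$.

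For the second identity $\norm{xy} = \norm{yx}$, I would apply the first identity twice together with conjugation of products: $\norm{yx} = \norm{(yx)^*} = \norm{x^* y^*}$, and then $\norm{x^* y^*} = \norm{x^* y}$ by the first identity applied to the pair $(x^*, y^*)$ (replacing $y^*$ by its conjugate $y$). Now I still need $\norm{x^* y} = \norm{xy}$; here I would conjugate again, $\norm{x^* y} = \norm{(x^* y)^*} = \norm{y^* x}$, which does not obviously close the loop — so the honest approach is to prove $\norm{xy} = \norm{yx}$ directly from $\norm{xy}^2 = \langle L_{x^*}L_x y, y\rangle_{\R}$ and the parallel formula $\norm{yx}^2 = \langle R_{x^*}R_x y, y\rangle_{\R}$ (using the adjoint of $R_x$ from Lemma \ref{lem:adjoint}), then reducing to showing $\tr$-type agreement or, better, invoking $\norm{xy}^2 + \norm{xy^*}^2 = 2\norm{x}^2\norm{y}^2$, a polarization identity that holds in all $A_n$ by \cite[Lem.~3.6 and surrounding]{DDD}; combined with the first identity this gives $\norm{xy}^2 = \norm{x}^2\norm{y}^2$ when $y \in \C_n^\perp \cup \C_n$, and the general case follows by splitting $y$ and checking the cross term vanishes by Lemma \ref{lem:ortho1}. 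The main obstacle is exactly this: establishing the norm-multiplicativity-on-one-side bookkeeping cleanly without circularity, and making sure each appeal to "the adjoint of $R_x$ is $R_{x^*}$" is used in the correctly associated product, since $A_n$ is non-associative and expressions like $(yx)x^*$ must not be silently reassociated.
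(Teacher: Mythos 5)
Your argument for the first identity $\norm{xy}=\norm{xy^*}$ is correct and is essentially the paper's argument in different packaging: both decompose $y$ into real and imaginary parts and reduce to the orthogonality of $x\Re(y)$ and $x\Im(y)$. The paper states this orthogonality via Lemma~\ref{lem:ortho1} and then uses the Pythagorean theorem directly on $\norm{xy}^2=\norm{x\Re(y)}^2+\norm{x\Im(y)}^2$; you route the same fact through the quadratic form $\langle L_{x^*}L_x y, y\rangle_\R$ and self-adjointness. Either is fine.

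Your treatment of the second identity $\norm{xy}=\norm{yx}$ has two problems, one a near-miss and one a genuine error. The near-miss: you start the conjugation-plus-first-identity chain from the wrong end, arrive at $\norm{yx}=\norm{x^*y}$, observe it doesn't close, and abandon the approach. It does close if you start from $\norm{xy}$ instead: $\norm{xy}=\norm{xy^*}$ by the first part, $=\norm{(xy^*)^*}=\norm{yx^*}$ since conjugation preserves norms and $(xy^*)^*=yx^*$, and $=\norm{yx}$ by the first part again (applied to the pair $y,x^*$). That is exactly the paper's argument. The genuine error is your fallback: the identity $\norm{xy}^2+\norm{xy^*}^2=2\norm{x}^2\norm{y}^2$ is \emph{false} in $A_n$ for $n\geq 4$. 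Combined with the first part of the lemma it would force $\norm{xy}^2=\norm{x}^2\norm{y}^2$, i.e.\ that $A_n$ is a composition algebra with no zero-divisors, contradicting (for instance) Proposition~\ref{prop:A4-zd-spec}. You cannot lean on any norm-multiplicativity in $A_n$; the entire eigentheory of $M_a$ developed in this paper is precisely an accounting of how far $\norm{ax}$ deviates from $\norm{a}\norm{x}$.
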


\begin{proof}
Since $y + y^*$ is real and $y - y^*$ is imaginary, 
Lemma \ref{lem:ortho1} implies that $\frac{1}{2}x(y+y^*)$ and 
$\frac{1}{2}x(y-y^*)$ are
orthogonal.  But $xy = \frac{1}{2}x(y+y^*) + \frac{1}{2}x(y-y^*)$ and
$xy^* = \frac{1}{2} x(y+y^*) - \frac{1}{2}x(y-y^*)$, so
\[
\norm{xy}^2 = \norm{\frac{1}{2}x(y+y^*)}^2 + \norm{\frac{1}{2}x(y-y^*)}^2 = 
\norm{xy^*}^2.
\]
This establishes the first part of the lemma.

For the second part, recall that $(xy^*)^* = yx^*$.  Then
\[
\norm{xy} = \norm{xy^*} = \norm{yx^*} = \norm{yx},
\]
where the first and third equalities are the first part of the lemma and the
second equality is the fact that conjugation preserves norms.
\end{proof}

\subsection{Standard basis}
\label{subsctn:alt}

The algebra $A_n$ is equipped with an inductively defined 
\mdfn{standard $\R$-basis} \cite[Defn.~2.10]{DDD}.  
The standard $\R$-basis is
orthonormal.

\begin{defn}
An element $a$ of $A_n$ is \mdfn{alternative} if $a \cdot ax = a^2 x$
for all $x$.  An algebra is said to be alternative if all of its
elements are alternative.
\end{defn}

The Cayley-Dickson algebra $A_n$ is alternative if and only if $n \leq 3$.

\begin{lemma}[DDD, Lem.~4.4]
\label{lem:basis-alt}
Standard basis elements are alternative.
\end{lemma}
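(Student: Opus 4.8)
The statement to prove is that $e\cdot ex = e^2 x$ for every standard basis element $e$ of every $A_n$ and every $x\in A_n$. The plan is an induction on $n$, but with a strengthened hypothesis: I will show simultaneously that every standard basis element $e$ of $A_n$ satisfies both $e(ex)=e^2 x$ (left-alternativity) and $(xe)e=xe^2$ (right-alternativity) for all $x$. The base case $A_0=\R$ is immediate, and in any $A_n$ the basis element $1$ is trivially two-sided alternative. For the inductive step I will use two structural facts: by the definition of the standard basis, each standard basis element of $A_n$ has the form $(f,0)$ or $(0,f)$ for a standard basis element $f$ of $A_{n-1}$; and every standard basis element except $1$ is imaginary with square $-1$ (this last fact can be folded into the same induction, or quoted from \cite{DDD}).

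Fix such an $e$, write $x=(y,z)$ with $y,z\in A_{n-1}$, and expand using the multiplication law $(a,b)(c,d)=(ac-d^*b,\,da+bc^*)$ and the conjugation law $(a,b)^*=(a^*,-b)$. If $e=(f,0)$ then $ex=(fy,zf)$, so $e(ex)=\bigl(f(fy),\,(zf)f\bigr)$, while $e^2=(f^2,0)$ gives $e^2 x=(f^2 y,\,zf^2)$; these agree by left-alternativity of $f$ in the first coordinate and right-alternativity of $f$ in the second. Similarly $xe=(yf,zf^*)$ yields $(xe)e=\bigl((yf)f,(zf^*)f^*\bigr)$ and $xe^2=\bigl(yf^2,\,z(f^*)^2\bigr)$, which agree by right-alternativity of $f$ and of $f^*$; since $f^*=\pm f$ for basis elements, the latter reduces to the former. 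If instead $e=(0,f)$, then $ex=(-z^*f,\,fy^*)$ and a short computation gives $e(ex)=\bigl(-(yf^*)f,\,-f(f^*z)\bigr)$, while $e^2=-1$ so $e^2 x=-x$; substituting $f^*=-f$ and $f^2=-1$, the required equalities $(yf^*)f=y$ and $f(f^*z)=z$ follow from right- and left-alternativity of $f$ respectively. The check that $(xe)e=xe^2$ in this case is analogous and uses left-alternativity of $f$. In each of the four verifications the induction hypothesis supplies precisely the one- or two-sided alternativity of $f$ that is needed, so the induction closes; in particular the left-alternativity half is the assertion of the lemma.

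The one genuinely unavoidable subtlety is the need to carry two-sided alternativity through the induction: already verifying left-alternativity for $e=(f,0)$ requires right-alternativity of $f$ (to handle the second coordinate $(zf)f$), so an induction on left-alternativity alone does not go through. Beyond that the argument is just bookkeeping with the Cayley--Dickson formulas, the only real care being the signs produced by $f^*=-f$ and the use of $f^2=-1$ for imaginary basis elements; none of the inner-product or $\C_n$-linearity machinery from earlier in this section is needed.
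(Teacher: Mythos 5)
The paper does not give its own proof here; Lemma \ref{lem:basis-alt} is quoted verbatim from [DDD, Lem.~4.4], so there is nothing internal to compare against. Your proof, however, is correct and self-contained. I verified the four coordinate computations: for $e=(f,0)$ you correctly get $e(ex)=(f(fy),(zf)f)$ versus $e^2x=(f^2y,zf^2)$, and $(xe)e=((yf)f,(zf^*)f^*)$ versus $xe^2=(yf^2,z(f^*)^2)$; for $e=(0,f)$ you correctly get $e(ex)=(-(yf^*)f,-f(f^*z))$ and $(xe)e=(-f^*(fy),-f(f^*z))$ against $e^2x=-x$, and the sign bookkeeping with $f^*=-f$, $f^2=-1$ (for $f\ne 1$; trivially for $f=1$) closes each case.

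Your central structural observation is the genuinely important point: the second coordinate of $e(ex)$ for $e=(f,0)$ is $(zf)f$, so already the left-alternativity step forces you to know right-alternativity of $f$, and a naive induction on left-alternativity alone does not close. Strengthening to two-sided alternativity is the right fix. One alternative worth knowing: conjugating $a\cdot ax=a^2x$ and using $(uv)^*=v^*u^*$ shows that left-alternativity of $a$ is equivalent to right-alternativity of $a^*$; since standard basis elements satisfy $f^*=\pm f$, left- and right-alternativity of a basis element are automatically equivalent, so one could also run the induction on left-alternativity and deduce right-alternativity from this conjugation identity rather than carrying it as a separate hypothesis. Either route is fine; yours is more direct and arguably easier to audit.
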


\subsection{Subalgebras}
\label{subsctn:subalg}

A subalgebra of $A_n$ is an $\R$-linear subspace containing $1$ that is closed
under both multiplication and conjugation.

\begin{defn}
\label{defn:HH_a}
For any elements $a_1, a_2, \ldots, a_k$ in $A_n$, 
let \mdfn{$\llangle a_1, a_2, \ldots, a_k \rrangle$}
denote the smallest subalgebra of $A_n$ that contains the
elements $a_1, a_2, \ldots, a_k$.
\end{defn}

We will usually apply this construction to two elements $a$ and $i_n$.
If $a$ does not belong to $\C_n$, then the subalgebra 
$\llangle a, i_n \rrangle$ 
has an additive basis consisting of $1$, $a$, $i_n$, and $i_n a$
and is isomorphic to the quaternions
\cite[Lem.~5.6]{DDD}.

Because of non-associativity, some properties of generators of Cayley-Dickson
algebras are counter-intuitive.
For example, the algebra $A_3$ is generated by three elements but not by
any two elements.  On the other hand, 
$A_4$ is generated by a generic pair of elements.

\subsection{The octonions}
\label{subsctn:oct}

We recall some properties of $A_3$ and establish some notation.

In $A_3$, we write 
\mdfn{$i$} $= i_1$,
\mdfn{$j$} $= i_2$, \mdfn{$k$} $= ij$, and
\mdfn{$t$} $= i_3$ because it makes the notation less cumbersome.
The standard basis for $A_3$ is
\[
\{ 1, i, j, k, t, it, jt, kt \}.
\]

The automorphism group of $A_3$ is the 14-dimensional sporadic 
Lie group $G_2$ \cite[Sec.~4.1]{B} \cite[Sec.~7]{DDD}.  It acts transitively
on the imaginary elements of length 1.  In other words, up to
automorphism, all imaginary unit vectors are the same.
In fact, $\Aut(A_3)$ acts transitively on ordered pairs of
orthogonal imaginary elements of unit length. 
Even better,
$\Aut(A_3)$ acts transitively on ordered triples $(x,y,z)$ of
pairwise orthogonal imaginary elements of unit length such that $z$ is also
orthogonal to $xy$.  

The subalgebra $\C_3$ is additively generated by $1$ and $t$.  However,
up to automorphism, we may assume that $\C_3$ is generated by $1$
together with any non-zero imaginary element.  Similarly, up to 
automorphism, we may assume that any imaginary element of $A_3$
is orthogonal to $\C_3$.  Such assumptions may not be made in $A_n$
for $n \geq 4$ because the automorphism group of $A_n$ does 
not act transitively \cite{Br} \cite{ES}.


\section{Eigentheory}

\begin{defn}
Let $a$ be a non-zero element of $A_n$.  Define \mdfn{$M_a$} to be the
$\R$-linear map $\frac{1}{\norm{a}^2} L_{a^*} L_{a}$.  The \mdfn{eigenvalues
of $a$} are the eigenvalues of $M_a$.  Similarly, the eigenvectors of
$a$ are the eigenvectors of $M_a$.  Let \mdfn{$\Eig_{\lambda}(a)$}
be the $\lambda$-eigenspace of $a$.
\end{defn}

For any real scalar $r$, the eigenvalues and eigenvectors of $ra$
are the same as those of $a$.  Therefore, we will assume that
$\norm{a} = 1$ whenever it makes our results easier to state.

\begin{remark}
If $a$ is imaginary, then $a^* = -a$.  In this case,
$x$ is a $\lambda$-eigenvector of $a$ if and
only if $a \cdot ax = -\lambda\norm{a}^2 x$.  
\end{remark}

\begin{lemma}
\label{lem:M-conj}
For any $a$ in $A_n$, $M_a$ equals $M_{a^*}$.
\end{lemma}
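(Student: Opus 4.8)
The plan is to unwind the definitions and reduce the claim $M_a = M_{a^*}$ to a statement that follows from the adjointness lemma already in hand. Recall $M_a = \frac{1}{\norm{a}^2} L_{a^*} L_a$ and $M_{a^*} = \frac{1}{\norm{a^*}^2} L_{a} L_{a^*}$. Since $\norm{a} = \norm{a^*}$, the scalar coefficients agree, so it suffices to prove that $L_{a^*} L_a = L_a L_{a^*}$ as linear operators on $A_n$; equivalently, that $a^*(ax) = a(a^* x)$ for all $x \in A_n$.

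First I would observe that both $L_{a^*}L_a$ and $L_a L_{a^*}$ are self-adjoint with respect to the real inner product: by Lemma \ref{lem:adjoint} the adjoint of a composite $L_b L_c$ is $L_{c^*} L_{b^*}$, so $(L_{a^*} L_a)^* = L_{a^*} L_a$ and likewise $(L_a L_{a^*})^* = L_a L_{a^*}$. So each operator is symmetric. Next I would compute the symmetric and antisymmetric parts of $L_a$ relative to conjugation: writing $a = \Re(a) + \Im(a)$ with $\Re(a)$ real (hence central, acting as a scalar) and $a^* = \Re(a) - \Im(a)$, we get $L_{a^*} L_a = (L_{\Re a} - L_{\Im a})(L_{\Re a} + L_{\Im a})$ and $L_a L_{a^*} = (L_{\Re a} + L_{\Im a})(L_{\Re a} - L_{\Im a})$. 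Since $L_{\Re a}$ is multiplication by the real scalar $\Re(a)$ it commutes with $L_{\Im a}$, so expanding both products gives $L_{\Re a}^2 - L_{\Im a}^2$ in each case. Hence $L_{a^*}L_a = L_a L_{a^*}$, and dividing by $\norm{a}^2 = \norm{a^*}^2$ gives $M_a = M_{a^*}$.

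The only point that needs a moment of care — and what I expect to be the one genuine obstacle — is the claim that $\Re(a)$ acts centrally, i.e. that $L_{\Re a}$ really is just multiplication by the scalar $\Re(a) \in \R \subseteq A_n$ and commutes with every $L_b$. This is where non-associativity could in principle bite, but it does not: $\R = A_0$ sits in $A_n$ as the subalgebra generated by $1$, and for a real scalar $r$ one has $r(bx) = b(rx) = (rb)x$ since scalar multiplication by $r$ is the $\R$-module structure on $A_n$ and all the maps in sight are $\R$-linear. So the identity $L_r L_b = L_b L_r = L_{rb}$ for $r$ real is just $\R$-linearity and needs no appeal to associativity of the algebra. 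Once that is noted, the computation above is purely formal.

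An alternative, even shorter route: both $L_{a^*}L_a$ and $L_aL_{a^*}$ are polynomials in the single operator $L_{\Im a}$ with real coefficients (each equals $\Re(a)^2\cdot\mathrm{Id} - L_{\Im a}^2$), so they automatically commute and in fact coincide. I would likely present the proof in this compressed form, remarking only that $\Re(a)$ is real and hence acts as a scalar.
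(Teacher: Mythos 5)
Your proof is correct and takes essentially the same approach as the paper: both decompose $a = \Re(a) + \Im(a)$, observe that the real part acts as a central scalar, and expand to see that $L_{a^*}L_a$ and $L_aL_{a^*}$ both equal $L_{\Re a}^2 - L_{\Im a}^2$. The self-adjointness remark is harmless but unnecessary; the scalar-commutation observation is the whole content, and you justify it carefully where the paper just says ``compute directly.''
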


\begin{proof}
Because $\norm{a} = \norm{a^*}$,
the claim is that $a^* \cdot ax = a \cdot a^*x$ 
for all $a$ and $x$ in $A_n$.  
To check this,
write $a=r+a'$ where $r$ is real and $a'$ is imaginary.  Compute directly
that
\[
(r+a') \cdot (r-a')x = (r-a') \cdot (r+a')x
\]
for all $x$ in $A_n$.  
\end{proof}

\begin{remark}
\label{rem:theta}
To make sense of the notation in the following proposition, 
note that any unit vector in $A_n$
can be written in the form $a \cos \theta + \beta \sin \theta$,
where $a$ and $\beta$ are both unit vectors with $a$ in $\C_n^\perp$
and $\beta$ in $\C_n$.  Generically, $a$ and $\beta$ are unique
up to multiplication by $-1$, and $\theta$ is unique up to the obvious
redundancies of trigonometry.
\end{remark}

\begin{lemma}
\label{lem:M-decomp}
Let $a$ be a unit vector in $\C_n^\perp$, and let $\beta$
be a unit vector in $\C_n$.  Then $M_{a \cos \theta + \beta \sin \theta}$
equals $I \sin^2 \theta + M_{a}\cos^2 \theta$.
\end{lemma}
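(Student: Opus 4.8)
The plan is to compute $M_{c}$ directly, where $c = a\cos\theta + \beta\sin\theta$, by expanding $L_{c^*}L_c$ and exploiting the splitting of $A_n$ into real and imaginary parts together with the conjugate-linearity of left multiplication by elements of $\C_n^\perp$. First I would note that $c$ is already a unit vector by Lemma \ref{lem:C-norm} (since $a$ and $\beta$ are $\C$-orthogonal unit vectors with $a\in\C_n^\perp$ and $\beta\in\C_n$, one checks $\norm{c}^2 = \cos^2\theta + \sin^2\theta = 1$), so $M_c = L_{c^*}L_c$. Write $c^* = a^*\cos\theta + \beta^*\sin\theta = -a\cos\theta + \beta^*\sin\theta$, using that $a$ is imaginary (being a nonzero element of $\C_n^\perp$) and $\beta^*$ for the conjugate of $\beta\in\C_n$.

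Next I would expand the composite $L_{c^*}L_c$ applied to an arbitrary $x\in A_n$, getting four terms:
\[
c^*(cx) = \cos^2\theta\,\bigl(-a(ax)\bigr) + \sin\theta\cos\theta\,\bigl(\beta^*(ax) - a(\beta x)\bigr) + \sin^2\theta\,\beta^*(\beta x).
\]
The two extreme terms are easy: $\beta^*(\beta x) = (\beta^*\beta)x = \norm{\beta}^2 x = x$ by Lemma \ref{lem:C-vs}, so the $\sin^2\theta$ term contributes $I\sin^2\theta$; and $-a(ax) = a^*(ax) = \norm{a}^2 M_a x = M_a x$, so the $\cos^2\theta$ term contributes $M_a\cos^2\theta$. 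The crux is therefore to show the cross term $\beta^*(ax) - a(\beta x)$ vanishes identically. Here is where I would invoke Lemma \ref{lem:C-conj-linear}: since $a\in\C_n^\perp$, left multiplication by $a$ is $\C_n$-conjugate-linear, so $a(\beta x) = a(L_\beta'... )$ — more precisely, writing $\beta x$ is a $\C_n$-scalar multiple situation, one has $a(\beta x)$; but the cleanest route is the identity from Exercise \ref{ex:multiply}(2)-style manipulation, or directly: $L_a(\alpha y) = \alpha^* L_a(y)$ with $\alpha = \beta$ gives... wait, that needs $y$ such that $\beta y = \beta x$. Instead I would argue: for any $x$, decompose and use that $L_a$ is $\C_n$-conjugate-linear to move $\beta$ across $a$, namely $a(\beta x)$ versus $\beta^*(ax)$; conjugate-linearity says precisely $L_a(\beta x)$ relates to $\beta^*$ times $L_a x$ only when $\beta x$ has the form "$\C_n$-scalar times something $L_a$ is applied to", which it doesn't in general, so the honest statement is: apply Lemma \ref{lem:C-conj-linear} to see $a(\beta x) = \beta^*(a x)$ fails in general but the needed cancellation is subtler.

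Let me restate the real plan for the cross term: I would prove $\beta^*(ax) = a(\beta x)$ for all $x$ directly. Since $\C_n$ is two-dimensional, it suffices to check this for $\beta = 1$ (trivial) and $\beta = i_n$; and since the claim is $\R$-bilinear in a suitable sense, by scaling we may take $\beta = i_n$, so $\beta^* = -i_n$. Then the claim becomes $-i_n(ax) = a(i_n x)$, i.e. $a(i_n x) = -i_n(ax)$, which is exactly the $\C_n$-conjugate-linearity of $L_a$ from Lemma \ref{lem:C-conj-linear} applied with $\alpha = i_n$ (so $\alpha^* = -i_n$) and the element $x$: $L_a(i_n x) = i_n^* L_a(x) = -i_n(ax)$. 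Hence the cross term is $\sin\theta\cos\theta(\beta^*(ax) - a(\beta x)) = 0$ after reducing to $\beta = i_n$ by linearity. Assembling the three surviving pieces gives $M_c = I\sin^2\theta + M_a\cos^2\theta$, as claimed. The main obstacle is simply making the reduction to $\beta = i_n$ clean — one must be slightly careful that the cross-term expression $\beta^*(ax) - a(\beta x)$ is genuinely $\R$-linear in $\beta$ (it is, since conjugation is $\R$-linear and $A_n$ is a $\C_n$-vector space under left multiplication by Lemma \ref{lem:C-vs}), after which Lemma \ref{lem:C-conj-linear} finishes it immediately.
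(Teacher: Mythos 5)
Your proposal is correct and follows essentially the same route as the paper: expand $L_{c^*}L_c$, simplify the $\sin^2\theta$ and $\cos^2\theta$ terms using $\beta^*\beta=\norm{\beta}^2$ and $a^*=-a$, and cancel the cross term using the $\C_n$-conjugate-linearity of $L_a$. The only thing to flag is that you talked yourself into an unnecessary detour on the cross term: Lemma \ref{lem:C-conj-linear} with $\alpha=\beta$ and the same $x$ gives $a(\beta x)=L_a(\beta x)=\beta^*L_a(x)=\beta^*(ax)$ verbatim for any $\beta\in\C_n$, so no reduction to $\beta=i_n$ is needed.
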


\begin{proof}
First note that the conjugate of $a \cos \theta + \beta \sin \theta$
is $-a \cos \theta + \beta^* \sin \theta$.  Distribute to compute that
\begin{align*}
(-a\cos \theta + \beta^* \sin \theta) \cdot (a\cos \theta + \beta\sin \theta)x
    = & &\\
\beta^*\beta x \sin^2 \theta 
-a \cdot \beta x\cos \theta \sin \theta
+ \beta^* \cdot ax\cos \theta \sin \theta 
-a \cdot ax\cos^2 \theta. & &
\end{align*}
Using that $\beta^* \beta = \norm{\beta}^2$ and 
that $a \cdot \beta x = \beta^*\cdot ax$ by Lemma \ref{lem:C-conj-linear}, 
this simplifies to 
\[
x\sin^2 \theta + a^* \cdot ax\cos^2 \theta.
\]
\end{proof}

\begin{lemma}
\label{lem:eig-i_n}
For any $a$ in $A_n$, the map $M_a$ is $\C_n$-linear.  In particular,
every eigenspace of $a$ is a $\C_n$-vector space.
\end{lemma}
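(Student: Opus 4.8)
Lemma~\ref{lem:eig-i_n} asserts two things: that $M_a$ is $\C_n$-linear, and that each eigenspace of $a$ is therefore a $\C_n$-subspace. The second clause is a formal consequence of the first: if $M_a(\alpha x) = \alpha M_a(x)$ for all $\alpha \in \C_n$, and $M_a x = \lambda x$, then $M_a(\alpha x) = \alpha \lambda x = \lambda(\alpha x)$, so $\Eig_\lambda(a)$ is closed under multiplication by $\C_n$; since it is obviously an $\R$-subspace, it is a $\C_n$-subspace. So the whole content is the $\C_n$-linearity of $M_a = \frac{1}{\norm{a}^2} L_{a^*}L_a$, i.e.\ the identity $a^*\cdot(a\cdot(\alpha x)) = \alpha\,(a^*\cdot(a x))$ for all $\alpha \in \C_n$ and $x \in A_n$.

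The plan is to reduce to the case where $a$ lies in $\C_n^\perp$, using the decomposition already set up in Remark~\ref{rem:theta} and Lemma~\ref{lem:M-decomp}. First I would note that it suffices to prove $\C_n$-linearity when $\norm{a}=1$, since rescaling $a$ does not change $M_a$. Write $a = a'\cos\theta + \beta\sin\theta$ with $a'$ a unit vector in $\C_n^\perp$ and $\beta$ a unit vector in $\C_n$ (the degenerate cases $a \in \C_n$ and $a \in \C_n^\perp$ being limiting instances). By Lemma~\ref{lem:M-decomp}, $M_a = I\sin^2\theta + M_{a'}\cos^2\theta$. The identity $I$ is trivially $\C_n$-linear by Lemma~\ref{lem:C-vs}, so $M_a$ is $\C_n$-linear as soon as $M_{a'}$ is. Thus it remains to treat the case $a \in \C_n^\perp$.

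For $a \in \C_n^\perp$, the key input is Lemma~\ref{lem:C-conj-linear}: $L_a(\alpha x) = \alpha^* L_a(x)$ for $\alpha \in \C_n$. Since $a^* = -a$ lies in $\C_n^\perp$ as well, $L_{a^*}$ is also $\C_n$-conjugate-linear. Therefore
\[
M_a(\alpha x) = \frac{1}{\norm{a}^2} L_{a^*}\bigl(L_a(\alpha x)\bigr)
             = \frac{1}{\norm{a}^2} L_{a^*}\bigl(\alpha^* L_a x\bigr)
             = \frac{1}{\norm{a}^2}\,(\alpha^*)^*\, L_{a^*}(L_a x)
             = \alpha\, M_a(x),
\]
where the third equality uses $\C_n$-conjugate-linearity of $L_{a^*}$ and the fact that conjugation on $\C_n$ is an involution. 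This proves $\C_n$-linearity of $M_a$ in the remaining case, and the eigenspace statement follows by the formal argument above.

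The only potential obstacle is making the reduction to $a \in \C_n^\perp$ fully rigorous, in particular handling the degenerate cases ($a \in \C_n$, where $M_a = I$ directly by Lemma~\ref{lem:C-vs}, or $a$ a pure scalar, which is covered) without circular reliance on Lemma~\ref{lem:M-decomp}. In practice Lemma~\ref{lem:M-decomp}'s statement already covers all $\theta$, so this is a non-issue; one could also bypass the decomposition entirely by proving the identity $a^*\cdot(a\cdot(\alpha x)) = \alpha\,(a^*\cdot(a x))$ directly, splitting $a = r + a'$ into real and $\C_n^\perp$ parts and expanding, but the argument via Lemmas~\ref{lem:M-decomp} and~\ref{lem:C-conj-linear} is cleaner and reuses machinery already established.
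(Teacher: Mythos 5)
Your proof is correct and follows essentially the same route as the paper's: reduce to $\norm{a}=1$, use Lemma~\ref{lem:M-decomp} to reduce to $a\in\C_n^\perp$, then observe via Lemma~\ref{lem:C-conj-linear} that $M_a$ is a composite of two $\C_n$-conjugate-linear maps and hence $\C_n$-linear. You spell out a few more details (the eigenspace consequence, the explicit conjugate-linearity computation), but the underlying argument is identical.
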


\begin{proof}
We may assume that $a$ is a unit vector.  Lemma \ref{lem:M-decomp}
allows us to assume that $a$ is imaginary.
Then Lemma \ref{lem:C-conj-linear} says that $M_a$ is the composition
of two conjugate-linear maps, which means that it is $\C_n$-linear.
\end{proof}

The next result is a technical lemma that will be used in many of our 
calculations.  

\begin{lemma}
\label{lem:M-L-adj}
If $a$, $x$, and $y$ belong to $A_n$, then
\[
\langle L_a x, L_a y \rangle_{\R} = \norm{a}^2 \langle M_a x, y \rangle_{\R} =
\norm{a}^2 \langle x, M_a y \rangle_{\R}.
\]
\end{lemma}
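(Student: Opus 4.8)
The plan is to unwind the definition of $M_a$ and apply the adjointness of $L_a$ from Lemma~\ref{lem:adjoint}. By definition, $M_a = \frac{1}{\norm{a}^2} L_{a^*} L_a$, so $\norm{a}^2 \langle M_a x, y\rangle_{\R} = \langle L_{a^*} L_a x, y\rangle_{\R}$. Since the adjoint of $L_{a^*}$ is $L_a$ (again by Lemma~\ref{lem:adjoint}, applied with $a^*$ in place of $a$ and using $(a^*)^* = a$), this equals $\langle L_a x, L_a y\rangle_{\R}$, which is the first claimed equality.

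For the second equality, I would argue that $M_a$ is self-adjoint with respect to the real inner product. This is immediate: $M_a$ is $\frac{1}{\norm{a}^2}$ times the composite $L_{a^*} L_a$, and the adjoint of a composite $ST$ is $T^* S^*$, so the adjoint of $L_{a^*} L_a$ is $L_a^* L_{a^*}^* = L_{a^*} L_a$ by Lemma~\ref{lem:adjoint}. Hence $\langle M_a x, y\rangle_{\R} = \langle x, M_a y\rangle_{\R}$, and multiplying by $\norm{a}^2$ gives the second equality. Alternatively, one can observe that $\langle L_a x, L_a y\rangle_{\R} = \langle x, L_{a^*} L_a y\rangle_{\R} = \norm{a}^2 \langle x, M_a y\rangle_{\R}$ directly, which handles both equalities in one pass.

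There is no real obstacle here; the lemma is a formal consequence of Lemma~\ref{lem:adjoint} together with the elementary fact that adjoints reverse composites. The only point requiring the tiniest bit of care is making sure Lemma~\ref{lem:adjoint} is being invoked for the element $a^*$ as well as for $a$, so that $(L_{a^*})^* = L_{(a^*)^*} = L_a$; this is why the statement of Lemma~\ref{lem:adjoint} is phrased for an arbitrary element of $A_n$. I would present the proof as a short two-line computation exhibiting both equalities at once via the middle identity $\langle L_a x, L_a y\rangle_{\R}$, noting in passing that this shows $M_a$ is self-adjoint, a fact that will be used later (for instance in establishing diagonalizability and non-negativity of the eigenvalues).
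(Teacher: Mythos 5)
Your proof is correct and takes the same approach as the paper, which simply observes that the lemma follows immediately from the adjointness properties of Lemma~\ref{lem:adjoint}. Your write-up spells out the details (applying adjointness with $a^*$ in place of $a$, noting $M_a$ is self-adjoint), but the substance is identical.
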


\begin{proof}
This follows immediately from the adjointness properties of 
Lemma \ref{lem:adjoint}.
\end{proof}

\begin{lemma}
\label{lem:M_a-zd}
For any $a$ in $A_n$, the kernels of $M_a$ and $L_a$ are equal.  
In particular,
$a$ is a zero-divisor if and only if $0$ is an eigenvalue of $a$.
\end{lemma}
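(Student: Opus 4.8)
The plan is to show the two kernels coincide by a double inclusion, with the nontrivial direction handled by Lemma \ref{lem:M-L-adj}. The easy inclusion is $\ker L_a \subseteq \ker M_a$: if $L_a x = 0$, then $M_a x = \frac{1}{\norm{a}^2} L_{a^*}(L_a x) = 0$. For the reverse inclusion, suppose $M_a x = 0$. Applying Lemma \ref{lem:M-L-adj} with $y = x$ gives $\langle L_a x, L_a x \rangle_{\R} = \norm{a}^2 \langle M_a x, x \rangle_{\R} = 0$, and since the real inner product is positive-definite this forces $L_a x = 0$. Hence $\ker M_a \subseteq \ker L_a$, and the two kernels are equal.

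For the second sentence, recall that $a$ is a zero-divisor precisely when there is a nonzero $x$ with $ax = 0$, i.e.\ when $\ker L_a \neq 0$. By the first part this is equivalent to $\ker M_a \neq 0$, which says exactly that $0$ is an eigenvalue of $M_a$, i.e.\ of $a$. One should note that $a \neq 0$ is assumed throughout (so that $\norm{a}^2 \neq 0$ and $M_a$ is defined), and that a zero-divisor is by convention nonzero, so there is no degenerate case to worry about.

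I do not expect any real obstacle here: the entire argument rests on the positive-definiteness of $\langle\,,\,\rangle_{\R}$ together with the already-established identity in Lemma \ref{lem:M-L-adj}, which packages the adjointness of $L_a$ and $L_{a^*}$ from Lemma \ref{lem:adjoint}. The only point that requires a moment's care is making sure the reader sees that $L_a x = 0$ follows from $\langle L_a x, L_a x\rangle_{\R} = 0$ rather than some weaker statement — but this is immediate from positive-definiteness of the inner product on $A_n$.
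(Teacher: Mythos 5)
Your proof is correct and follows the paper's argument exactly: the easy inclusion by direct application of $L_{a^*}$, and the reverse inclusion via Lemma \ref{lem:M-L-adj} with $y=x$ together with positive-definiteness of $\langle\,,\,\rangle_{\R}$. The only difference is cosmetic — you spell out the zero-divisor equivalence and the nondegeneracy hypotheses a bit more explicitly than the paper does.
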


\begin{proof}
If $ax = 0$, then $a^* \cdot ax = 0$.

For the other direction, suppose that $a^* \cdot ax = 0$.  
This implies that $\langle M_a x, x \rangle_{\R}$ equals zero, so
Lemma \ref{lem:M-L-adj} implies that $\langle L_a x, L_a x \rangle_{\R}$
equals zero.  In other words, $\norm{ax}^2 = 0$,
so $ax = 0$.
\end{proof}

\begin{prop}
\label{prop:non-neg}
For every $a$ in $A_n$, $M_a$ is diagonalizable with
non-negative eigenvalues.
If $\lambda_1$ and $\lambda_2$ are distinct
eigenvalues of $a$, then $\Eig_{\lambda_1}(a)$ and
$\Eig_{\lambda_2}(a)$ are orthogonal.
\end{prop}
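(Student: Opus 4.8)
The plan is to recognize $M_a$ as a positive semi-definite self-adjoint operator on the finite-dimensional real inner product space $A_n$ and then invoke the real spectral theorem. Nothing beyond the lemmas already proved (chiefly Lemma \ref{lem:M-L-adj}) is needed.

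First I would observe that Lemma \ref{lem:M-L-adj} says exactly that $\langle M_a x, y \rangle_{\R} = \langle x, M_a y \rangle_{\R}$ for all $x, y$ in $A_n$ (the factor $\norm{a}^2$ is a fixed positive constant and cancels), so $M_a$ is self-adjoint with respect to the real inner product. The spectral theorem for self-adjoint operators on a finite-dimensional real inner product space then immediately yields that $M_a$ is diagonalizable, that all of its eigenvalues are real, and that eigenvectors belonging to distinct eigenvalues are orthogonal. For the orthogonality claim one can also just recall the standard one-line argument: if $M_a x = \lambda_1 x$ and $M_a y = \lambda_2 y$, then $\lambda_1 \langle x, y \rangle_{\R} = \langle M_a x, y \rangle_{\R} = \langle x, M_a y \rangle_{\R} = \lambda_2 \langle x, y \rangle_{\R}$, which forces $\langle x, y \rangle_{\R} = 0$ when $\lambda_1 \neq \lambda_2$, and hence $\Eig_{\lambda_1}(a) \perp \Eig_{\lambda_2}(a)$.

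For non-negativity of the eigenvalues I would use Lemma \ref{lem:M-L-adj} once more, this time with $y = x$: we get $\langle M_a x, x \rangle_{\R} = \frac{1}{\norm{a}^2} \langle L_a x, L_a x \rangle_{\R} = \frac{1}{\norm{a}^2} \norm{ax}^2 \geq 0$. Thus $M_a$ is positive semi-definite, so if $M_a x = \lambda x$ for a nonzero eigenvector $x$, then $\lambda \norm{x}^2 = \langle M_a x, x \rangle_{\R} \geq 0$ gives $\lambda \geq 0$.

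I do not expect any real obstacle: once Lemma \ref{lem:M-L-adj} is available the entire statement is a packaging of the real spectral theorem. The one point deserving a moment's care is making sure we work with the real inner product rather than the Hermitian one: by Lemma \ref{lem:eig-i_n} the operator $M_a$ is $\C_n$-linear, so one might be tempted to diagonalize it over $\C_n$, but the self-adjointness we need — and the one that Lemma \ref{lem:M-L-adj} provides — is with respect to $\langle -,- \rangle_{\R}$, and that is precisely what makes the argument go through.
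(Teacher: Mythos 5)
Your proof is correct and takes essentially the same approach as the paper: both establish that $M_a$ is self-adjoint with respect to $\langle -,-\rangle_{\R}$ (you cite Lemma~\ref{lem:M-L-adj}, the paper invokes Lemma~\ref{lem:adjoint}, but these are the same fact), invoke the real spectral theorem for diagonalizability and orthogonality of eigenspaces, and derive non-negativity from $\langle M_a x, x\rangle_{\R} = \frac{1}{\norm{a}^2}\norm{ax}^2 \geq 0$.
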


\begin{proof}
Recall from Lemma \ref{lem:adjoint} that
the adjoint of $L_a$ is $L_{a^*}$.  Therefore, $L_{a^*} L_{a}$ is symmetric;
this shows that $M_a$ is also symmetric.
The fundamental theorem of symmetric matrices says that
$M_a$ is diagonalizable.
The orthogonality of the eigenspaces is a standard
property of symmetric matrices.

To show that all of the eigenvalues are non-negative,
let $M_a x=\lambda x$ with $\lambda \neq 0$
and $x\neq 0$.  The value
$\langle x, x \rangle_{\R}$ is positive, and it
equals
\[
\frac{1}{\lambda} \langle \lambda x, x \rangle_{\R} =
\frac{1}{\lambda} \langle  M_a x, x \rangle_{\R} =
\frac{1}{\lambda} \langle  L_{a} x, L_{a} x \rangle_{\R}
\]
by Lemma \ref{lem:M-L-adj}.  Since $\langle L_a x, L_a x \rangle_{\R}$ is
also positive,
it follows that $\lambda$ must be positive.
\end{proof}

In practice, we will only study eigenvalues of elements of $A_n$
that are orthogonal to $\C_n$.
The result below explains that if we understand the
eigenvalues in this special case, then we understand them all.

Recall from Remark \ref{rem:theta} that any unit vector in $A_n$
can be written in the form $a \cos \theta + \beta \sin \theta$,
where $a$ is a unit vector in $\C_n^\perp$ and $\beta$ is a unit vector
in $\C_n$.

\begin{prop}
\label{prop:reduce-C-perp}
Let $a$ and $\beta$ be unit vectors in $A_n$ such that
$a$ belongs to $\C_n^\perp$ and $\beta$ belongs to $\C_n$.
Then
\[ \Eig_\lambda(a)=\Eig_{\sin^2 \theta +\lambda \cos^2\theta}
                       (a \cos \theta + \beta \sin \theta).
\]
In particular, $\lambda$ is an
eigenvalue of $a$ if and only if 
$\sin^2 \theta+\lambda \cos^2\theta$ is an
eigenvalue of $a\cos \theta + \beta \sin \theta$.  
\end{prop}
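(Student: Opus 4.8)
The plan is to reduce everything to the already-established Lemma~\ref{lem:M-decomp}. That lemma tells us $M_{a\cos\theta + \beta\sin\theta} = I\sin^2\theta + M_a\cos^2\theta$ as linear operators on $A_n$. Given this identity, the proposition is pure linear algebra: if $T$ is a linear operator and $S = I\sin^2\theta + T\cos^2\theta$, then $x$ lies in $\Eig_\lambda(T)$ exactly when $Tx = \lambda x$, which happens exactly when $Sx = (\sin^2\theta + \lambda\cos^2\theta)x$, i.e.\ exactly when $x \in \Eig_{\sin^2\theta + \lambda\cos^2\theta}(S)$. So the eigenspaces match up on the nose.

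Concretely, the steps I would carry out are: (1) invoke Remark~\ref{rem:theta} and Lemma~\ref{lem:M-decomp} to rewrite $M_{a\cos\theta + \beta\sin\theta}$ as $I\sin^2\theta + M_a\cos^2\theta$; (2) observe that for a fixed nonzero vector $x$, the equation $M_a x = \lambda x$ is equivalent to $(I\sin^2\theta + M_a\cos^2\theta)x = (\sin^2\theta + \lambda\cos^2\theta)x$ --- this is a one-line computation distributing the operator over the linear combination; (3) conclude that $\Eig_\lambda(a) = \Eig_{\sin^2\theta + \lambda\cos^2\theta}(a\cos\theta + \beta\sin\theta)$ as subspaces, since membership in each side is characterized by the same equation; and (4) remark that the ``in particular'' clause about eigenvalues follows immediately, since $\lambda$ is an eigenvalue of $a$ iff $\Eig_\lambda(a) \ne 0$ iff the matched eigenspace is nonzero iff $\sin^2\theta + \lambda\cos^2\theta$ is an eigenvalue of $a\cos\theta + \beta\sin\theta$.

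There is essentially no obstacle here: Lemma~\ref{lem:M-decomp} has done all the real work by computing the operator explicitly, and the remaining content is the trivial fact that an affine function $\lambda \mapsto \sin^2\theta + \lambda\cos^2\theta$ of the operator shifts and scales eigenvalues while fixing eigenvectors. The only point worth a moment's care is that one should note we may assume $a$ has unit norm as in the hypothesis (the statement already does), and that the equivalence of the two eigenvector equations does not require $\cos\theta \ne 0$ --- it is a biconditional regardless, since we are reading the implication in the direction ``$M_a x = \lambda x \implies Sx = (\sin^2\theta+\lambda\cos^2\theta)x$'' which always holds, and conversely when $\cos\theta \ne 0$ we can solve back, while when $\cos\theta = 0$ both sides of the claimed eigenspace equality are simply all of $A_n$ with eigenvalue $1 = \sin^2\theta$, so the identity still holds. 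I would mention this degenerate case briefly for completeness but not belabor it.
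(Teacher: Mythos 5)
Your main argument coincides exactly with the paper's: both reduce to Lemma~\ref{lem:M-decomp} and observe that the eigenspaces of $M_a$ and of $I\sin^2\theta + M_a\cos^2\theta$ are identical, with eigenvalues related by the affine map $\lambda \mapsto \sin^2\theta + \lambda\cos^2\theta$. The paper simply cites the lemma and leaves this to the reader.

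Your aside about $\cos\theta = 0$ is incorrect, however. In that case the right-hand side is $\Eig_1(\pm\beta)$, and since $\beta$ is a unit vector in $\C_n$ and $\C_n$ acts associatively (Lemma~\ref{lem:C-vs}), $M_{\pm\beta} = L_{\beta^*}L_\beta = I$, so $\Eig_1(\pm\beta) = A_n$. But the left-hand side $\Eig_\lambda(a)$ is \emph{not} all of $A_n$ in general: if $\lambda$ is not an eigenvalue of $a$ it is zero, and if $a$ is any non-alternative unit vector in $\C_n^\perp$ (which exists for $n \geq 4$) then $\Eig_\lambda(a)$ is a proper subspace for every $\lambda$. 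So the eigenspace identity genuinely fails at $\cos\theta = 0$; the equivalence you use in step (2) is only a one-way implication there, as you yourself note. The proposition tacitly assumes the decomposition of Remark~\ref{rem:theta} is non-degenerate (i.e.\ $\cos\theta \neq 0$), and your proof is fine with that restriction. You should drop or correct the claim that the degenerate case ``still holds,'' since it doesn't.
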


\begin{proof}
This follows immediately from Lemma \ref{lem:M-decomp}, which says
that $M_{a\cos \theta + \beta \sin \theta}$
equals $I \sin^2 \theta + M_{a}\cos^2 \theta$.
\end{proof}

\begin{remark}
\label{rem:reduce-C-perp}
Note that the case $\lambda=1$ is special in the above proposition,
giving that $\Eig_1(a)=\Eig_1(a\cos\theta+\beta\sin\theta)$.  
In other words, the $1$-eigenspace of an element of $A_n$
depends only on its orthogonal projection onto $\C_n^\perp$.
\end{remark}

\begin{remark}
\label{rem:zd}
Let $a$ and $\beta$ be unit vectors in $A_n$ such that
$a$ belongs to $\C_n^\perp$ and $\beta$ belongs to $\C_n$.
Propositions \ref{prop:non-neg} and \ref{prop:reduce-C-perp} show that 
the eigenvalues of $a\cos \theta + \beta\sin \theta$ are at least
$\sin^2 \theta$.
In particular, if $0$ is an eigenvalue of 
$a \cos \theta + \beta \sin \theta$, then $\sin \theta = 0$.
In other words, zero-divisors are always orthogonal to $\C_n$ 
\cite[Cor.~1.9]{M1} \cite[Lem.~9.5]{DDD}.
\end{remark}

Recall from Section \ref{subsctn:subalg} that $\llangle a,i_n\rrangle$ is the
subalgebra generated by $a$ and $i_n$.

\begin{prop}
\label{prop:eig-H_a}
For any $a$ in $A_n$, $\llangle a,i_n \rrangle$ is contained in $\Eig_1(a)$.
In particular, $1$ is an eigenvalue of every non-zero element of $A_n$.
\end{prop}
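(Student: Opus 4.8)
The plan is to prove the stronger statement that $M_a$ acts as the identity on all of $\llangle a,i_n\rrangle$. Unwinding the definition of $M_a = \tfrac{1}{\norm{a}^2}L_{a^*}L_a$, this amounts to showing that $a^*\cdot(ax) = \norm{a}^2 x$ for every $x$ in $\llangle a,i_n\rrangle$. The key point is that, although $A_n$ is badly non-associative for $n\ge 4$, the subalgebra $\llangle a,i_n\rrangle$ is tame: it equals $\C_n$ when $a\in\C_n$, and it is a copy of the quaternions when $a\notin\C_n$ (by \cite[Lem.~5.6]{DDD}). In either case products of its elements associate, so one should avoid invoking alternativity of $a$ in $A_n$ (which fails) and instead work entirely inside the subalgebra.

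First I would record the scalar identity $a^*a = \norm{a}^2$: applying $bb^* = \norm{b}^2$ to $b = a^*$ and using $(a^*)^* = a$ and $\norm{a^*} = \norm{a}$ gives $a^*a = \norm{a^*}^2 = \norm{a}^2$. Next, dispose of the case $a\in\C_n$: here $\llangle a,i_n\rrangle = \C_n$, and Lemma~\ref{lem:C-vs} says left multiplication by elements of $\C_n$ is associative, so for $x\in\C_n$ we get $a^*\cdot(ax) = (a^*a)x = \norm{a}^2 x$, i.e.\ $M_a x = x$.

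For $a\notin\C_n$, I would invoke the cited fact that $\llangle a,i_n\rrangle\cong\HH$. Since the multiplication on this subalgebra is the restriction of the one on $A_n$, and since subalgebras are closed under conjugation (so $a^*\in\llangle a,i_n\rrangle$), the elements $ax$, $a^*\cdot(ax)$ and $(a^*a)x$ all lie in $\llangle a,i_n\rrangle$; associativity there yields $a^*\cdot(ax) = (a^*a)x = \norm{a}^2 x$. Hence $M_a x = x$ for every $x\in\llangle a,i_n\rrangle$, which is the containment $\llangle a,i_n\rrangle\subseteq\Eig_1(a)$. The final ``in particular'' is then immediate: $1\in\llangle a,i_n\rrangle\subseteq\Eig_1(a)$, and $1\neq 0$, so $1$ is an eigenvalue of every non-zero $a$.

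I do not expect a genuine obstacle here; the proof is short. The only things to be careful about are the bookkeeping identity $a^*a = \norm{a}^2$ and the discipline of performing the triple-product manipulation inside the associative subalgebra (via Lemma~\ref{lem:C-vs} in the degenerate case), rather than by an illegal appeal to $a\cdot(ax) = a^2x$ in $A_n$.
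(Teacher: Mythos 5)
Your proof is correct and matches the paper's argument: both note that $\llangle a,i_n\rrangle$ is isomorphic to $\C$ or $\HH$ (via \cite[Lem.~5.6]{DDD}), hence associative, and then compute $a^*\cdot ax = (a^*a)x = \norm{a}^2 x$ for $x$ in the subalgebra. You merely spell out the case split and the identity $a^*a = \norm{a}^2$ a bit more explicitly than the paper does.
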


\begin{proof}
First note that $\llangle a,i_n \rrangle$ is isomorphic to 
either $\C$ or $\HH$; this follows from \cite[Lem.~5.6]{DDD}.
In either case, it is an associative subalgebra.
Therefore, $a^* \cdot a x = a^*a \cdot x = \norm{a}^2 x$ for any 
$x$ in $\llangle a, i_n \rrangle$.
\end{proof}

\begin{lemma}
\label{lem:M-beta-a}
For any $a$ in $\C_n^\perp$ and any $\beta$ in $\C_n$,
$M_a = M_{\beta a}$.
\end{lemma}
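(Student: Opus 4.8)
We want to show $M_a = M_{\beta a}$ for $a \in \C_n^\perp$ and $\beta \in \C_n$ a unit vector (the statement should be read with $\beta$ of unit length, since $M$ is normalized by the norm and $M_a$ only depends on the direction of $a$; alternatively one checks directly that $M_{ra} = M_a$ for real $r$, which is already noted after the definition of $M_a$). The plan is to reduce to a statement purely about left multiplications and then invoke the conjugate-linearity of $L_c$ for $c \in \C_n^\perp$ (Lemma \ref{lem:C-conj-linear}) together with the basic identities in Exercise \ref{ex:multiply}. Since $\beta a \in \C_n^\perp$ by \cite[Lem.~3.8]{DDD} (the subspace $\C_n^\perp$ is a $\C_n$-vector space), both $M_a$ and $M_{\beta a}$ are built from left multiplication by elements of $\C_n^\perp$, so Lemma \ref{lem:C-conj-linear} applies to every factor in sight.

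The key computation is to evaluate $(\beta a)^* \cdot ((\beta a) x)$ and show it equals $a^* \cdot (a x)$, after accounting for norms. First I would record that $(\beta a)^* = a^* \beta^*$ and, since $\beta \in \C_n$, also that $\beta^* \beta = \norm{\beta}^2$; and that $\norm{\beta a}^2 = \norm{\beta}^2 \norm{a}^2$ by Lemma \ref{lem:C-norm}. Now I would compute $M_{\beta a} x = \frac{1}{\norm{\beta}^2\norm{a}^2} L_{(\beta a)^*} L_{\beta a}(x)$ step by step. Using Lemma \ref{lem:C-conj-linear} on $L_a$ (which is $\C_n$-conjugate-linear) and the identities of Exercise \ref{ex:multiply}, one pushes the scalar $\beta$ past the $a$-multiplications: $(\beta a)x = \beta(ax) = \beta \cdot ax$, and then $(a^*\beta^*)\cdot(\beta \cdot ax)$; since $a^* \in \C_n^\perp$ as well, $L_{a^*}$ is conjugate-linear, and the $\beta$'s combine as $\beta^* \beta = \norm{\beta}^2$ up to the required conjugations. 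After the scalars collapse to $\norm{\beta}^2$, the prefactor $\frac{1}{\norm{\beta}^2\norm{a}^2}$ cancels one copy of $\norm{\beta}^2$, leaving exactly $\frac{1}{\norm{a}^2} L_{a^*}L_a(x) = M_a x$.

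The main obstacle is purely bookkeeping: keeping track of which conjugates $(\,\cdot\,)^*$ land on $\beta$ as it is shuttled through the two conjugate-linear operators $L_a$ and $L_{a^*}$, and confirming that the two conjugations compose to the identity on the $\beta$-scalar (so that $\beta^* \beta$ or $\beta \beta^*$ appears rather than $\beta^2$ or $(\beta^*)^2$). Once the order of operations is pinned down, every individual step is an instance of Lemma \ref{lem:C-conj-linear} or Exercise \ref{ex:multiply}, so there is no real difficulty — one just has to be careful that $\beta a$, $a$, $a^*$, and $a^*\beta^*$ are all genuinely in $\C_n^\perp$ (the first three are clear; the last equals $(\beta a)^*$, and $\C_n^\perp$ is closed under conjugation), which licenses the use of conjugate-linearity at each stage.
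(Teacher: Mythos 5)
Your overall strategy (a direct computation with left multiplications, collapsing the $\beta$-scalars via conjugate-linearity) is in the same spirit as the paper's proof, but the key identity you assert is false, and that falsity conceals a case split that the paper handles explicitly. You claim $(\beta a)x = \beta(ax)$. This holds when $x \in \llangle a, i_n\rrangle$, but fails elsewhere: when $x$ lies in $\C_n^\perp$ with $a$ and $x$ $\C$-orthogonal (so that $ax \perp \C_n$), Exercise \ref{ex:multiply}(3) gives $\beta a \cdot x = ax \cdot \beta$ and then Exercise \ref{ex:multiply}(1) applied to $ax \in \C_n^\perp$ gives $ax \cdot \beta = \beta^*(ax)$, so the correct identity is $(\beta a)x = \beta^*(ax)$. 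As a concrete check, take $n=3$, $a=i$, $\beta=t$, $x=j$: then $(ti)j = (0,k)$ while $t(ij) = (0,-k)$. If you carry your version through the rest of the computation, the scalars collect as $(\beta^*)^2$ rather than $\beta^*\beta$, so the argument would not close. Moreover, the identities in Exercise \ref{ex:multiply} are only stated for elements of $\C_n^\perp$; when $x \in \llangle a, i_n\rrangle$ they simply don't apply, so ``every step is an instance of Lemma \ref{lem:C-conj-linear} or Exercise \ref{ex:multiply}'' is not available on that part of $A_n$.

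The missing idea is the reduction that makes the hypotheses of those lemmas available. The paper first observes that $\llangle a, i_n\rrangle = \llangle \beta a, i_n\rrangle$, so by Proposition \ref{prop:eig-H_a} both $M_a$ and $M_{\beta a}$ act as the identity on that $4$-dimensional subspace. Since both operators are $\C_n$-linear (Lemma \ref{lem:eig-i_n}), it then suffices to check equality on $x \in \C_n^\perp$ that are $\C$-orthogonal to $a$; precisely there $ax$ is orthogonal to $\C_n$, the exercises apply, and the direct computation gives $(\beta a)^* \cdot (\beta a)x = \beta^*\beta(a^* \cdot ax)$. You need this two-part decomposition, together with the correct $\beta^*$ rather than $\beta$, for the argument to go through.
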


\begin{proof}
We may assume that $a$ and $\beta$ both have norm $1$.

First note that $\llangle a,i_n \rrangle$ equals 
$\llangle \beta a, i_n \rrangle$.  
By Proposition \ref{prop:eig-H_a}, $M_a$ and $M_{\beta a}$ are equal
on this 4-dimensional subspace.

Because $M_a$ and $M_{\beta a}$ are both $\C_n$-linear 
by Lemma \ref{lem:eig-i_n},
we only need to verify that $M_a(x) = M_{\beta a} (x)$ for
$x$ in $\C_n^\perp$ such that $a$ and $x$ are $\C$-orthogonal.
Compute
\[ 
(\beta a)^* \cdot (\beta a)x = \beta^* \beta (a^* \cdot ax) =
\norm{\beta}^2 a^* \cdot ax = a^* \cdot ax
\]
using Lemma \ref{lem:C-conj-linear}.  In this computation, 
we need that $ax$ is orthogonal to $\C_n$;
this is equivalent to the assumption that $a$ and $x$ are $\C$-orthogonal.
\end{proof}

\begin{prop}
\label{prop:eig-C_n}
Let $a$ and $\beta$ be non-zero vectors in $A_n$
such that $\beta$ belongs to $\C_n$.
Then $\Eig_\lambda(a)=\Eig_{\lambda}(\beta a)$ for any $\lambda$.
In particular, the eigenvalues of $a$ and $\beta a$ are the same.
\end{prop}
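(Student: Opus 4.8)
The plan is to reduce to the case already settled in Lemma~\ref{lem:M-beta-a}, namely $a\in\C_n^\perp$, by writing $a$ in the standard form of Remark~\ref{rem:theta} relative to $\C_n$ and then transporting everything through Proposition~\ref{prop:reduce-C-perp}. First I would observe that scaling is harmless: replacing $a$ by $ra$ ($r\in\R$ nonzero) leaves $\Eig_\lambda(a)$ unchanged, and replacing $\beta$ by $r\beta$ changes $\beta a$ only by a real scalar, so we may assume $\norm{a}=\norm{\beta}=1$. The degenerate case $a\in\C_n$ I would dispose of at once: then $\beta a\in\C_n$ as well, and Lemma~\ref{lem:M-decomp} (with $\theta=\pi/2$) gives $M_x=I$ for every unit vector $x\in\C_n$, so $M_a=I=M_{\beta a}$ and both sides of the asserted equality are $A_n$ when $\lambda=1$ and $0$ otherwise.

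So suppose $a\notin\C_n$ and write $a=a'\cos\theta+\gamma\sin\theta$ with $a'$ a unit vector in $\C_n^\perp$, $\gamma$ a unit vector in $\C_n$, and $\cos\theta\neq0$. The key point is that left multiplication by the unit vector $\beta\in\C_n$ respects this decomposition: $\beta a=(\beta a')\cos\theta+(\beta\gamma)\sin\theta$, where $\beta a'$ is again a unit vector in $\C_n^\perp$ (since $\C_n^\perp$ is a $\C_n$-subspace by \cite[Lem.~3.8]{DDD} and $\norm{\beta a'}=\norm{\beta}\norm{a'}=1$ by Lemma~\ref{lem:C-norm}) and $\beta\gamma$ is again a unit vector in $\C_n$ (as $\C_n\cong\C$ has multiplicative norm). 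Now Lemma~\ref{lem:M-beta-a} applies to the pair $a',\beta$ and yields $M_{a'}=M_{\beta a'}$, hence $\Eig_\lambda(a')=\Eig_\lambda(\beta a')$ for every real $\lambda$.

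To conclude, I would apply Proposition~\ref{prop:reduce-C-perp} to the two decompositions above: for every real $\lambda$,
\[
\Eig_{\sin^2\theta+\lambda\cos^2\theta}(a)=\Eig_\lambda(a')=\Eig_\lambda(\beta a')=\Eig_{\sin^2\theta+\lambda\cos^2\theta}(\beta a).
\]
Since $\cos\theta\neq0$, the affine map $\lambda\mapsto\sin^2\theta+\lambda\cos^2\theta$ is a bijection of $\R$, so the above identifies $\Eig_\mu(a)$ with $\Eig_\mu(\beta a)$ for every real $\mu$, which is the proposition.

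I expect the only delicate point to be bookkeeping in this reduction: one must check that $\beta a'$ and $\beta\gamma$ really land in $\C_n^\perp$ and $\C_n$ with the correct (unit) norms so that Proposition~\ref{prop:reduce-C-perp} genuinely applies to $\beta a$, and one must separate out the case $a\in\C_n$, where $\cos\theta=0$ and the bijection argument degenerates. Once these are handled, the rest is a direct substitution and no further computation is needed.
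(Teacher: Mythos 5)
Your proof is correct and follows the paper's own strategy: normalize, reduce to the case $a\in\C_n^\perp$ via Proposition~\ref{prop:reduce-C-perp}, then invoke Lemma~\ref{lem:M-beta-a}. The paper states the reduction step in a single sentence; you have spelled out why it works (left multiplication by $\beta$ carries the decomposition $a=a'\cos\theta+\gamma\sin\theta$ to a decomposition of $\beta a$ with the same $\theta$, and the affine map $\lambda\mapsto\sin^2\theta+\lambda\cos^2\theta$ is a bijection when $\cos\theta\neq0$), and you have also correctly isolated the degenerate case $a\in\C_n$, which the paper's terse phrasing passes over.
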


See also \cite[Cor.~3.6]{MG} for a related result in different
notation.

\begin{proof}
We may assume that $a$ and $\beta$ both have norm $1$.
Proposition \ref{prop:reduce-C-perp}
implies that the result holds for all $a$ if it holds
for $a$ in $\C_n^\perp$.  Therefore, we may assume that
$a$ is orthogonal to $\C_n$.
Then Lemma \ref{lem:M-beta-a} gives the desired result immediately.
\end{proof}

\begin{lemma}
\label{lem:bilin-form}
For all $x$ and $y$ in $A_n$, 
$\tr (L_{x^*} L_y)$ equals $2^n \langle x, y\rangle_{\R}$.
\end{lemma}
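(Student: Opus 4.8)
The plan is to prove this by reducing to the standard basis and exploiting bilinearity, or, more cleanly, by reducing to the diagonal case $x = y$ via a polarization argument. First I would observe that both sides of the claimed identity $\tr(L_{x^*}L_y) = 2^n \langle x, y\rangle_\R$ are $\R$-bilinear in the pair $(x,y)$: the left side because $x \mapsto x^*$ and $x \mapsto L_x$ are $\R$-linear and $\tr$ is linear, and the right side obviously. Hence it suffices to verify the identity when $x = e$ and $y = f$ range over the standard orthonormal $\R$-basis $\{e_1, \dots, e_{2^n}\}$ of $A_n$ from Section \ref{subsctn:alt}. Equivalently, by polarization it is enough to handle the case $x = y = e$ a single standard basis element (since the bilinear form $(x,y) \mapsto \tr(L_{x^*}L_y)$ will turn out to be symmetric — indeed $\tr(L_{x^*}L_y) = \tr(L_y L_{x^*}) = \tr((L_{y^*}L_x)^*) = \tr(L_{y^*}L_x)$ using Lemma \ref{lem:adjoint} and that trace is conjugation-invariant for real matrices — and a symmetric bilinear form is determined by its diagonal values together with, say, the values $\tr(L_{(e_i+e_j)^*}L_{e_i+e_j})$, all of which reduce to the diagonal case).

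For the diagonal case, let $e$ be a standard basis element. By Lemma \ref{lem:M-L-adj} (with $a = x = y = e$), for any $v$ we have $\langle L_e v, L_e v\rangle_\R = \norm{e}^2 \langle M_e v, v\rangle_\R = \langle M_e v, v\rangle_\R$, since $\norm{e} = 1$. Summing over an orthonormal basis $v = e_1, \dots, e_{2^n}$ gives $\tr(L_{e^*}L_e) = \sum_k \langle L_e e_k, L_e e_k\rangle_\R = \tr(M_e)$. Now by Lemma \ref{lem:basis-alt}, $e$ is alternative, so $e^* \cdot e v = \norm{e}^2 v = v$ for all $v$ — wait, alternativity gives $e \cdot e v = e^2 v$; combined with Lemma \ref{lem:M-conj}-style manipulations one sees $M_e = I$ on all of $A_n$ when $e$ is a basis element (this is essentially Proposition \ref{prop:eig-H_a} applied with $a = e$ together with alternativity forcing the $1$-eigenspace to be everything, or a direct check that $e^* \cdot e v = v$ using that $\llangle e, i_n\rrangle$-type arguments extend via alternativity). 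Hence $\tr(L_{e^*}L_e) = \tr(I) = 2^n = 2^n \langle e, e\rangle_\R$, as $\langle e, e\rangle_\R = \norm{e}^2 = 1$ and $\dim_\R A_n = 2^n$.

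The main obstacle I anticipate is justifying that $M_e = I$ for a standard basis element $e$ — i.e., that left multiplication by a basis element is an (orthogonal) isometry — cleanly from the cited results. Alternativity gives $e \cdot e v = e^2 v$, and for an imaginary basis element $e^2 = -\norm{e}^2 = -1$, so $e^*\cdot ev = -e \cdot ev = -e^2 v = v = M_e v$; for the real basis element $1$ this is trivial; so in fact $M_e = I$ in all cases. This is the one spot where the structure of the standard basis (each element being $\pm$-square-root of $\pm 1$, and alternative) really enters. An alternative route avoiding the basis entirely: take the trace in a \emph{fixed} orthonormal basis, use $\tr(L_{x^*}L_y) = \sum_k \langle L_{x^*}L_y e_k, e_k\rangle_\R = \sum_k \langle L_y e_k, L_x e_k\rangle_\R$ (Lemma \ref{lem:adjoint}), recognize this as the trace of the symmetric operator $\tfrac12(L_{x^*}L_y + L_{y^*}L_x)$, and evaluate on basis vectors; but this still ultimately needs the alternativity of the $e_k$, so the route through $M_e = I$ is the cleanest and I would present that.
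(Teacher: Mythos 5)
Your reduction step has a gap. Bilinearity correctly reduces the problem to pairs $(e_i,e_j)$ of standard basis elements, but your polarization argument then claims it is ``equivalent'' to treat only the diagonal case $x=y=e$ with $e$ a \emph{basis} element, and that is false: a symmetric bilinear form is determined by its quadratic form on \emph{all} vectors, not just on a basis. Your own parenthetical concedes that one also needs the values $\tr(L_{(e_i+e_j)^*}L_{e_i+e_j})$, but these do \emph{not} ``reduce to the diagonal case'' you handle, because $e_i+e_j$ is not a standard basis element and $M_{e_i+e_j}$ need not be the identity (indeed $e_i+e_j$ is generally not alternative). So after your computation you have established $\tr(L_{e^*}L_e)=2^n$ for basis elements $e$, but you have not shown the off-diagonal vanishing $\tr(L_{e_i^*}L_{e_j})=0$ for $i\neq j$, and nothing in the proposal supplies it.

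The paper sidesteps this by keeping $x$ and $y$ arbitrary throughout and instead letting the \emph{summation variable} be the basis element. Writing $\tr(L_{x^*}L_y)=\sum_z\langle z,L_{x^*}L_yz\rangle_{\R}$ with $z$ ranging over the standard basis, one applies adjointness (Lemma \ref{lem:adjoint}) twice: first to move $L_{x^*}$ across, giving $\sum_z\langle xz,yz\rangle_{\R}$, and then to move $R_z$ across, giving $\sum_z\langle xz\cdot z^*,y\rangle_{\R}$; alternativity of the basis element $z$ (Lemma \ref{lem:basis-alt}) collapses $xz\cdot z^*$ to $x$, and the sum of $2^n$ identical terms $\langle x,y\rangle_{\R}$ finishes it. Note that alternativity enters for $z$, not for $x$ or $y$, which is exactly what makes the argument work for arbitrary $x,y$ without any polarization. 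Your sketched ``alternative route'' at the end is essentially this computation; that is the one you should carry out, completing the step $\sum_k\langle ye_k,xe_k\rangle_{\R}=\sum_k\langle ye_k\cdot e_k^*,x\rangle_{\R}=2^n\langle x,y\rangle_{\R}$ rather than falling back to the polarization shortcut.
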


\begin{proof}
Recall the standard basis described in Section \ref{subsctn:alt}.
We want to compute
\[
\sum_z \langle z, L_{x^*} L_y z \rangle_{\R},
\]
where $z$ ranges over the standard basis.  Using the adjointness of
Lemma \ref{lem:adjoint}, compute that
\[
\sum_z \langle z, L_{x^*} L_y z \rangle_{\R} =
\sum_z \langle xz, y z \rangle_{\R} =
\sum_z \langle xz \cdot z^*, y \rangle_{\R} =
\sum_z \langle x, y \rangle_{\R} = 
2^n \langle x, y \rangle_{\R},
\]
where the third equality uses that $z$ is alternative 
by Lemma \ref{lem:basis-alt}.
\end{proof}

\begin{prop}
\label{prop:eig-sum}
For any $a$ in $A_n$,
the sum of the eigenvalues of $a$ is equal to $2^n$.
\end{prop}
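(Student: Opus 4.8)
The plan is to compute the sum of the eigenvalues of $a$ as the trace of $M_a$ and then invoke Lemma \ref{lem:bilin-form}. Since $M_a$ is diagonalizable by Proposition \ref{prop:non-neg}, the sum of its eigenvalues (counted with multiplicity) equals $\tr(M_a)$. First I would reduce to the case $\norm{a} = 1$, which is harmless because scaling $a$ by a positive real does not change the eigenvalues. Then $M_a = L_{a^*} L_a$, and applying Lemma \ref{lem:bilin-form} with $x = y = a$ gives $\tr(L_{a^*} L_a) = 2^n \langle a, a \rangle_{\R} = 2^n \norm{a}^2 = 2^n$.

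The only subtlety is the bookkeeping of multiplicities: Proposition \ref{prop:eig-sum} presumably intends ``the sum of the eigenvalues'' to mean the sum over eigenvalues each weighted by the dimension of its eigenspace (equivalently, the sum of all diagonal entries after diagonalizing), and it is exactly this quantity that equals the trace. Since $M_a$ is diagonalizable over $\R$ by Proposition \ref{prop:non-neg}, its trace is basis-independent and equals $\sum_\lambda \lambda \cdot \dim \Eig_\lambda(a)$, so there is nothing more to check once the trace has been identified with $2^n$.

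I do not anticipate a genuine obstacle here: the heavy lifting has already been done in Lemma \ref{lem:bilin-form} (whose proof uses that standard basis elements are alternative) and in Proposition \ref{prop:non-neg} (diagonalizability). The proof is essentially a two-line application: normalize, then quote the trace computation. If one wanted to be slightly more careful, one could remark that the formula $\tr(L_{a^*}L_a) = 2^n\langle a, a\rangle_{\R}$ together with $M_a = \frac{1}{\norm{a}^2} L_{a^*}L_a$ gives $\tr(M_a) = 2^n$ for \emph{every} nonzero $a$ without assuming $\norm{a} = 1$, making even the normalization step unnecessary.
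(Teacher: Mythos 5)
Your proof is correct and follows the same route as the paper: identify the sum of the eigenvalues with $\tr(M_a)$ using diagonalizability (Proposition \ref{prop:non-neg}), then compute $\tr(M_a) = \frac{1}{\norm{a}^2}\tr(L_{a^*}L_a) = 2^n$ via Lemma \ref{lem:bilin-form}. Your closing observation that the normalization is unneeded is also accurate.
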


\begin{proof}
This follows immediately from Lemma \ref{lem:bilin-form}
because the trace of a diagonalizable operator equals the sum of its
eigenvalues.
\end{proof}

\begin{lemma}
\label{lem:L-restrict}
For any $a$ in $A_n$, the map $L_a$ takes $\Eig_\lambda(a)$ into
$\Eig_\lambda(a)$.  If $\lambda$ is non-zero, then $L_a$ restricts to
an automorphism of $\Eig_\lambda(a)$.
\end{lemma}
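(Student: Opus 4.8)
The plan is to show first that $L_a$ preserves each eigenspace, and then to deduce that the restriction is an automorphism when $\lambda \neq 0$ by a dimension/injectivity argument.

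For the invariance, I would argue as follows. The operator $M_a = \frac{1}{\norm a^2} L_{a^*} L_a$ commutes with $L_a$ (or rather, we have $M_a L_a = \frac{1}{\norm a^2} L_{a^*} L_a L_a$, and one checks $L_a M_a = \frac{1}{\norm a^2} L_a L_{a^*} L_a$; these agree because $M_a = M_{a^*}$ by Lemma~\ref{lem:M-conj}, i.e.\ $L_{a^*}L_a = L_a L_{a^*}$). Since $L_a$ commutes with $M_a$, it carries each eigenspace of $M_a$ to itself: if $M_a x = \lambda x$ then $M_a(L_a x) = L_a(M_a x) = \lambda L_a x$. This gives the first claim. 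Alternatively, and perhaps cleaner, one can avoid discussing commutation by noting that for $x \in \Eig_\lambda(a)$ one has $M_a(ax) = \frac{1}{\norm a^2} a^* \cdot a \cdot (ax)$, and using $a^*(a \cdot ax) = a \cdot a^*(ax)$... but this again amounts to $M_{a}=M_{a^*}$, so the commutation phrasing is the honest one.

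For the second claim, suppose $\lambda \neq 0$. By Lemma~\ref{lem:M_a-zd}, the kernel of $L_a$ equals the kernel of $M_a$, which is $\Eig_0(a)$. By Proposition~\ref{prop:non-neg}, $\Eig_0(a)$ is orthogonal to $\Eig_\lambda(a)$, so $\Eig_0(a) \cap \Eig_\lambda(a) = 0$. Hence $L_a$ restricted to $\Eig_\lambda(a)$ is injective. Since $\Eig_\lambda(a)$ is finite-dimensional and $L_a$ maps it into itself by the first part, an injective endomorphism of a finite-dimensional vector space is an automorphism, so $L_a|_{\Eig_\lambda(a)}$ is an automorphism.

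The only mild subtlety — really the main point to get right — is the commutation $L_a M_a = M_a L_a$, which rests on $L_{a^*} L_a = L_a L_{a^*}$; this is exactly the content of Lemma~\ref{lem:M-conj} (since $\norm a = \norm{a^*}$, the equality $M_a = M_{a^*}$ unwinds to $a^* \cdot ax = a \cdot a^* x$ for all $x$). Everything else is formal: eigenspace preservation under a commuting operator, and injective-implies-surjective in finite dimensions. I do not anticipate any real obstacle.
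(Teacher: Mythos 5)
Your proof is correct.  For the first claim (invariance of $\Eig_\lambda(a)$ under $L_a$), your commutation argument and the paper's direct computation of $M_a(ax)$ are really the same proof in different clothing: both hinge on $M_a = M_{a^*}$ from Lemma~\ref{lem:M-conj}, which (since $\norm{a}=\norm{a^*}$) unwinds to $L_{a^*}L_a = L_a L_{a^*}$, and the paper's chain of equalities is just this commutation applied to a specific $x$.  For the second claim you take a genuinely different route.  You argue abstractly: Lemma~\ref{lem:M_a-zd} identifies $\ker L_a$ with $\Eig_0(a)$, Proposition~\ref{prop:non-neg} gives $\Eig_0(a)\cap\Eig_\lambda(a)=0$ for $\lambda\neq 0$, so the restriction is injective, hence bijective by finite-dimensionality.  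The paper instead exhibits the explicit inverse $\tfrac{1}{\lambda\norm{a}^2}L_{a^*}$, observing that $L_{a^*}L_a$ acts as the scalar $\lambda\norm{a}^2$ on $\Eig_\lambda(a)$.  Your version is tidier and pulls in no new computation, but the paper's buys more: it produces the actual inverse map, which is the form of the statement that gets used elsewhere (for instance in the cancellation discussion in the introduction, where $x_i = \tfrac{1}{\lambda_i\norm{a}^2}a^*b_i$).  It is worth noting that the explicit-inverse argument implicitly uses that $L_{a^*}$ also preserves $\Eig_\lambda(a)$; this follows from the first claim applied to $a^*$ together with $\Eig_\lambda(a^*)=\Eig_\lambda(a)$, again via $M_a = M_{a^*}$.
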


\begin{proof}
Let $x$ belong to $\Eig_\lambda(a)$.  
Using that $M_a = M_{a^*}$ from Lemma \ref{lem:M-conj},
compute that
\[
M_a(ax) = M_{a^*}(ax) =
\frac{1}{\norm{a}^2}a \cdot a^* (ax) = 
a \cdot M_a x = \lambda ax.
\]
This shows that $ax$ also belongs to $\Eig_\lambda(a)$.

For the second claim, simply note that $L_{a^*} L_{a}$ is
scalar multiplication by $\lambda \norm{a}^2$ on $\Eig_\lambda(a)$.
Thus the inverse to $L_a$ is $\frac{1}{\lambda\norm{a}^2}L_{a^*}$.
\end{proof}

\begin{remark}
\label{rem:L-restrict}
For $\lambda \neq 0$, the restriction 
$L_a :\Eig_\lambda(a) \map \Eig_\lambda(a)$ is a similarity
in the sense that it is an isometry up to scaling.  This follows
from Lemma \ref{lem:M-L-adj}.

Also, an immediate consequence of Lemma \ref{lem:L-restrict} is
that the image of $L_a$ is equal to the orthogonal complement
of $\Eig_0(a)$.  This fact is used in \cite{DDDD}.
\end{remark}

\begin{prop}
\label{prop:eig-mod-4}
Let $n \geq 2$.
For any $a$ in $A_n$, 
every eigenspace of $a$ is even-dimensional over $\C_n$.
In particular, the real dimension of any eigenspace is a multiple of 4.
\end{prop}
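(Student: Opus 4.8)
The plan is to exhibit, for each eigenspace $\Eig_\lambda(a)$, a $\C_n$-linear map from that eigenspace to itself whose square is $-1$ (or more precisely, $-\norm{a}^2$ times the identity) and which is $\C_n$-conjugate-linear with respect to the natural complex structure — thereby endowing the eigenspace with a \emph{quaternionic} structure, which forces its $\C_n$-dimension to be even. The natural candidate for this map is $L_a$ itself, restricted to $\Eig_\lambda(a)$. By Lemma \ref{lem:L-restrict}, $L_a$ does map $\Eig_\lambda(a)$ into itself, and by Proposition \ref{prop:reduce-C-perp} together with Proposition \ref{prop:eig-C_n} we may reduce to the case where $a$ is a unit imaginary vector in $\C_n^\perp$. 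In that case Lemma \ref{lem:C-conj-linear} tells us $L_a$ is $\C_n$-conjugate-linear, and $L_{a^*}L_a = -L_a^2$ acts on $\Eig_\lambda(a)$ as multiplication by $\lambda$ (here $\norm{a}^2 = 1$), so $L_a^2 = -\lambda \cdot I$ on $\Eig_\lambda(a)$.

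The argument then runs as follows. First handle the case $\lambda \neq 0$: on $\Eig_\lambda(a)$, set $J = \tfrac{1}{\sqrt{\lambda}} L_a$. Then $J$ is $\C_n$-conjugate-linear and $J^2 = -I$. Combined with the action of $\C_n \cong \C$ (which is $\C_n$-linear and squares $i_n$ to $-1$), the operators $i_n$ and $J$ satisfy the quaternion relations — $J \circ i_n = i_n^* \circ J = -i_n \circ J$ — so $\Eig_\lambda(a)$ becomes a module over the quaternions $\HH$. Any $\HH$-module is even-dimensional over $\C$, so $\dim_{\C_n}\Eig_\lambda(a)$ is even. For $\lambda = 0$, i.e. $\Eig_0(a) = \ker L_a$, the map $L_a$ is zero and this trick fails directly; instead I would argue that the \emph{sum} of all the nonzero eigenspaces is even-dimensional over $\C_n$ (it is the orthogonal complement of $\Eig_0(a)$, by Remark \ref{rem:L-restrict}), while the whole space $A_n$ has $\C_n$-dimension $2^{n-1}$, which is even for $n \geq 2$; hence $\Eig_0(a)$ is also even-dimensional over $\C_n$. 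The final sentence of the proposition follows since $\dim_{\R} = 2\dim_{\C_n}$, so a $\C_n$-dimension that is even gives a real dimension divisible by $4$.

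The step I expect to be the main obstacle is verifying carefully that $i_n$ and $J$ really generate an $\HH$-action — specifically, checking the anticommutation $J i_n = -i_n J$ on $\Eig_\lambda(a)$. This comes down to the conjugate-linearity of $L_a$ from Lemma \ref{lem:C-conj-linear}: $L_a(i_n x) = i_n^* L_a(x) = -i_n L_a(x)$, which is exactly what is needed. The one subtlety is making sure the reduction to $a \in \C_n^\perp$ imaginary is legitimate for \emph{this} statement; Propositions \ref{prop:reduce-C-perp} and \ref{prop:eig-C_n} preserve eigenspaces on the nose (not merely up to isomorphism), and Lemma \ref{lem:M-decomp} lets us strip off the real part, so the eigenspace being studied is literally unchanged, and its $\C_n$-dimension is what we want. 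One should also note $\Eig_\lambda(a)$ is genuinely a $\C_n$-vector space by Lemma \ref{lem:eig-i_n}, so "even-dimensional over $\C_n$" makes sense. With these reductions in place the quaternionic-structure argument is short, and the $\lambda = 0$ case is handled by the dimension-count complement argument above.
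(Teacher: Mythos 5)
Your proof is correct, and it follows the same overall skeleton as the paper's: reduce to $a\in\C_n^\perp$ via Proposition \ref{prop:reduce-C-perp}, use Lemma \ref{lem:L-restrict} to get $L_a$ as a self-map of each nonzero eigenspace, invoke conjugate-linearity from Lemma \ref{lem:C-conj-linear}, and handle $\lambda=0$ by a dimension count against $\dim_{\C_n}A_n = 2^{n-1}$. The one place you diverge is the final step for $\lambda\neq 0$. The paper delegates it to an external citation, \cite[Lem.~6.6]{DDD}, which asserts that a conjugate-linear \emph{anti-Hermitian} automorphism can only exist on an even-dimensional $\C$-vector space. You instead observe that, with $a$ a unit imaginary vector, $L_a^2 = -\norm{a}^2 M_a = -\lambda I$ on $\Eig_\lambda(a)$, so $J = \lambda^{-1/2}L_a$ is a conjugate-linear operator with $J^2 = -I$; together with multiplication by $i_n$ this is a genuine quaternionic structure, forcing $\dim_\R$ to be a multiple of $4$ and hence $\dim_{\C_n}$ to be even. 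This is self-contained (needing only in-paper lemmas), avoids invoking the anti-Hermitian property entirely, and makes the mechanism concrete — the relation $J^2=-I$ is available precisely because you have restricted to a single eigenspace. The cost is having to check the quaternion relations, which you do correctly ($J i_n = -i_n J$ is exactly conjugate-linearity). One small remark: your citation of Proposition \ref{prop:eig-C_n} in the reduction step is harmless but unnecessary — any $a\in\C_n^\perp$ is automatically imaginary, since $1\in\C_n$, so Proposition \ref{prop:reduce-C-perp} and Lemma \ref{lem:M-decomp} already give you everything you need.
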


See also \cite[Thm.~4.6]{MG} for the second claim.

\begin{proof}
By Proposition \ref{prop:reduce-C-perp}, we may assume that 
$a$ is orthogonal to $\C_n$.  Let $\lambda$ be an eigenvalue
of $a$.

Recall from Lemma \ref{lem:C-conj-linear}
that $L_a$ is a conjugate-linear anti-Hermitian map.
If $\lambda$ is non-zero, then Lemma \ref{lem:L-restrict}
says that $L_a$ restricts to an automorphism of $\Eig_\lambda(a)$.
By \cite[Lem.~6.6]{DDD}, conjugate-linear anti-Hermitian 
automorphisms exist only on even-dimensional $\C$-vector spaces.

Now consider $\lambda = 0$.  The $\C_n$-dimension of $\Eig_0(a)$ is equal
to $2^{n-1}$ minus the dimensions of the other eigenspaces.
By the previous paragraph and the fact that $2^{n-1}$ is even,
it follows that $\Eig_0(a)$ is also even-dimensional.
\end{proof}

The previous proposition, together with 
Propositions \ref{prop:non-neg} and \ref{prop:eig-sum},
shows that if $a$ belongs
to $A_n$, then the eigenvalues of $a$ are at most $2^{n-2}$.  However,
this bound is not sharp.  Later in Corollary \ref{cor:eig-bound}
we will prove a stronger result.

\begin{prop}
\label{prop:eig-norm}
Let $a$ belong to $A_n$, and let $\lambda\geq 0$.  
Then $x$ belongs to $\Eig_\lambda(a)$
if and only if $\norm{ax}=\sqrt{\lambda}\norm{a} \norm{x}$ and
$\norm{M_a x} = \lambda \norm{x}$.
\end{prop}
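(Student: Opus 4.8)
The plan is to prove this by going back and forth across the three conditions, using the adjointness identity of Lemma~\ref{lem:M-L-adj} as the main engine. Set $\langle x,x\rangle_\R=\|x\|^2$ and recall that Lemma~\ref{lem:M-L-adj} gives
\[
\|ax\|^2 = \|a\|^2\langle M_a x, x\rangle_\R = \|a\|^2\langle x, M_a x\rangle_\R
\]
for all $x$. First I would dispose of the easy direction: if $M_a x = \lambda x$, then $\|M_a x\| = \lambda\|x\|$ is immediate (using $\lambda\geq 0$ so that $|\lambda|=\lambda$), and $\|ax\|^2 = \|a\|^2\langle M_a x, x\rangle_\R = \|a\|^2\lambda\|x\|^2$, so $\|ax\| = \sqrt{\lambda}\,\|a\|\,\|x\|$. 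Here I use Proposition~\ref{prop:non-neg} only to know eigenvalues are nonnegative, but in fact the hypothesis $\lambda\geq 0$ is handed to us.

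For the converse, suppose $\|ax\| = \sqrt{\lambda}\,\|a\|\,\|x\|$ and $\|M_a x\| = \lambda\|x\|$; I want to conclude $M_a x = \lambda x$. The key point is that $M_a$ is symmetric and diagonalizable with an orthonormal eigenbasis (Proposition~\ref{prop:non-neg}), so I can write $x = \sum_i x_i$ with $M_a x_i = \mu_i x_i$, the $x_i$ pairwise orthogonal, and the $\mu_i\geq 0$ distinct. Then the first hypothesis, rewritten via Lemma~\ref{lem:M-L-adj}, says $\langle M_a x, x\rangle_\R = \lambda\|x\|^2$, i.e.\ $\sum_i \mu_i\|x_i\|^2 = \lambda\sum_i\|x_i\|^2$. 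The second hypothesis says $\sum_i \mu_i^2\|x_i\|^2 = \|M_a x\|^2 = \lambda^2\sum_i\|x_i\|^2$. Combining these two, $\sum_i(\mu_i - \lambda)^2\|x_i\|^2 = \sum_i\mu_i^2\|x_i\|^2 - 2\lambda\sum_i\mu_i\|x_i\|^2 + \lambda^2\sum_i\|x_i\|^2 = \lambda^2\|x\|^2 - 2\lambda^2\|x\|^2 + \lambda^2\|x\|^2 = 0$. Hence $\mu_i = \lambda$ whenever $x_i\neq 0$, so every component of $x$ lies in $\Eig_\lambda(a)$, i.e.\ $M_a x = \lambda x$. (If $x=0$ the conclusion is trivial.)

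I expect the only genuinely delicate point is making sure both hypotheses are actually needed and used correctly: a single condition like $\|M_a x\| = \lambda\|x\|$ does not force $x$ to be an eigenvector (consider $x$ with components at eigenvalues straddling $\lambda$), and likewise $\langle M_a x,x\rangle_\R = \lambda\|x\|^2$ alone is just a mean-value statement. It is precisely the pairing of the first moment and the second moment that yields the sum-of-squares identity $\sum_i(\mu_i-\lambda)^2\|x_i\|^2 = 0$. So the proof is really the observation that $\|M_a x - \lambda x\|^2 = \|M_a x\|^2 - 2\lambda\langle M_a x, x\rangle_\R + \lambda^2\|x\|^2$, and that the two hypotheses are exactly what is needed to make the right-hand side vanish — one can even phrase the whole argument without diagonalizing, just expanding $\|M_a x - \lambda x\|^2$ directly and substituting $\langle M_a x, x\rangle_\R = \|ax\|^2/\|a\|^2 = \lambda\|x\|^2$. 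That is the cleanest route and is the version I would write up.
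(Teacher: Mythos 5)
Your proposal is correct and, in the version you identify at the end as the cleanest route (expanding $\norm{M_a x - \lambda x}^2$ directly and substituting via Lemma~\ref{lem:M-L-adj}), it is exactly the paper's proof. The spectral-decomposition detour you sketch first is a valid alternative presentation of the same computation, but the direct expansion you ultimately prefer is what appears in the text.
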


The above proposition is a surprisingly strong result.  We know that 
$L_{a^*}L_a$
scales an element of $\Eig_{\lambda}(a)$ by $\lambda\norm{a}^2$.  
The proposition
makes the non-obvious claim 
that this scaling occurs in two geometrically equal stages for the
two maps $L_{a^*}$ and $L_a$.
Moreover, it says that as long
as the norms of $L_a x$ and $M_a x$ are correct, then the direction
of $M_a x$ takes care of itself.  In practice, it is a very 
useful simplification not to have
to worry about the direction of $M_a x$.
One part of Proposition \ref{prop:eig-norm} is proved
in \cite[Prop.~4.20]{MG}.

\begin{proof}
First suppose that $x$ belongs to $\Eig_\lambda(a)$.
The second desired equality follows immediately.
For the first equality, use 
Lemma \ref{lem:M-L-adj} to compute that 
\[
\norm{ax}^2 = \norm{a}^2 \langle M_a x, x \rangle_{\R} =
\lambda \norm{a}^2 \norm{x}^2.
\]
Now take square roots.
This finishes one direction.

For the other direction, note that $x$ belongs to $\Eig_\lambda(a)$
if and only if the norm of $M_a x - \lambda x$ is zero.
Using the formulas in the proposition and Lemma \ref{lem:M-L-adj},
compute that
\begin{align*}
\langle M_a x - \lambda x, M_a x - \lambda x \rangle_{\R} & = & \\
\norm{M_a x}^2 + \lambda^2 \norm{x}^2 - 
2\lambda \langle M_a x, x \rangle_{\R} & = & \\
\lambda^2 \norm{x}^2 + \lambda^2 \norm{x}^2 - 
2\lambda \frac{\norm{ax}^2}{\norm{a}^2} &=&\\
2 \lambda^2 \norm{x}^2 - 
2\lambda \frac{\lambda \norm{a}^2 \norm{x}^2}{\norm{a}^2} &=0.&
\end{align*}
\end{proof}


\section{Maximum and minimum eigenvalues}

\begin{defn}
For $a$ in $A_n$, let \mdfn{$\lambda^-_a$} denote the minimum eigenvalue of
$a$, and let \mdfn{$\lambda^+_a$} denote the maximum eigenvalue of $a$.
\end{defn}

Recall from Proposition \ref{prop:eig-H_a} that $1$ is always an eigenvalue
of $a$ if $a$ is non-zero.  Therefore, $\lambda^+_a$ is always at least 1,
$\lambda^-_a$ is at most 1, and $\lambda^-_a = \lambda^+_a$ if and only if
$a$ is alternative.

\begin{defn}
\label{defn:eigen-decomp}
Let $a$ and $x$ belong to $A_n$.  The \mdfn{eigendecomposition of $x$
with respect to $a$} is the sum $x = x_1 + \cdots + x_k$ where
each $x_i$ is an eigenvector of $a$ with eigenvalue $\lambda_i$ such
that the $\lambda_i$ are distinct.
\end{defn}

Note that the eigendecomposition of $x$ with respect to $a$ is unique
up to reordering.  Note also from Proposition \ref{prop:non-neg}
that the eigendecomposition
of $x$ is an orthogonal decomposition in the sense that
$x_i$ and $x_j$ are orthogonal for distinct $i$ and $j$.

\begin{lemma}
\label{lem:eigen-decomp}
Let $a$ and $x$ belong to $A_n$, and let $x_1 + \cdots + x_k$ be
the eigendecomposition of $x$ with respect to $a$.  Then
$ax_1 + \cdots + ax_k$ is the eigendecomposition of $ax$ with 
respect to $a$, except that the term $ax_i$ must be removed if
$x_i$ belongs to $\Eig_0(a)$.
\end{lemma}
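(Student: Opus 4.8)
The plan is to push the eigendecomposition of $x$ through $L_a$ one term at a time, using Lemma \ref{lem:L-restrict} to control where each piece lands and Lemma \ref{lem:M_a-zd} to detect which pieces vanish. First I would observe that by linearity $ax = ax_1 + \cdots + ax_k$, and that Lemma \ref{lem:L-restrict} guarantees $ax_i \in \Eig_{\lambda_i}(a)$ for every $i$. Thus $ax$ is already exhibited as a sum of vectors lying in pairwise distinct eigenspaces of $a$, one for each original $\lambda_i$; the only remaining question is which of these summands are nonzero.

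Next I would split into two cases for each index $i$. If $\lambda_i = 0$, then $x_i \in \Eig_0(a) = \ker L_a$ by Lemma \ref{lem:M_a-zd}, so $ax_i = 0$ and that term contributes nothing and must be discarded. If $\lambda_i \neq 0$, then Lemma \ref{lem:L-restrict} says $L_a$ restricts to an automorphism of $\Eig_{\lambda_i}(a)$; since $x_i$ is a nonzero eigenvector, $ax_i$ is a nonzero element of $\Eig_{\lambda_i}(a)$, hence a genuine $\lambda_i$-eigenvector of $a$.

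Finally, deleting the $\lambda_i = 0$ term (if it occurs) leaves $ax$ written as a sum of nonzero eigenvectors of $a$ with pairwise distinct eigenvalues, which by the uniqueness noted just after Definition \ref{defn:eigen-decomp} is exactly the eigendecomposition of $ax$ with respect to $a$. I do not expect any real obstacle here: the whole argument reduces to Lemmas \ref{lem:L-restrict} and \ref{lem:M_a-zd}, and the only point that needs a moment's care is checking that $ax_i$ is genuinely nonzero when $\lambda_i \neq 0$, which is precisely the content of $L_a$ being an automorphism of $\Eig_{\lambda_i}(a)$ rather than merely mapping it into itself.
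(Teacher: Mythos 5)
Your proof is correct and is essentially the same argument the paper has in mind; the paper's proof simply says ``This follows immediately from Lemma \ref{lem:L-restrict},'' and you have spelled out the details (applying $L_a$ termwise, invoking Lemma \ref{lem:M_a-zd} to see that the $\lambda_i=0$ term dies, and using that $L_a$ is an automorphism of $\Eig_{\lambda_i}(a)$ for $\lambda_i \neq 0$ to see the remaining terms are nonzero).
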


\begin{proof}
This follows immediately from Lemma \ref{lem:L-restrict}.
\end{proof}

\begin{prop}
\label{prop:eig-boundnorm}
Let $a$ belong to $A_n$.  For any $x$ in $A_n$,
\begin{enumerate}
\item
$\sqrt{\lambda^-_a} \norm{a} \norm{x} \leq \norm{ax} \leq 
 \sqrt{\lambda^+_a}\norm{a} \norm{x}$.
\item
$\norm{ax}=\sqrt{\lambda^+_a} \norm{a} \norm{x}$ if and only if
$x$ belongs to $\Eig_{\lambda^+_a}(a)$.
\item
$\norm{ax}=\sqrt{\lambda^-_a} \norm{a} \norm{x}$ if and only if
$x$ belongs to $\Eig_{\lambda^-_a}(a)$.
\end{enumerate}
\end{prop}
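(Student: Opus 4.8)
The plan is to reduce everything to the spectral decomposition of $x$ with respect to $a$ and then apply the orthogonality of eigenspaces together with the norm formula of Proposition \ref{prop:eig-norm}. First I would write the eigendecomposition $x = x_1 + \cdots + x_k$ of $x$ with respect to $a$, with $x_i \in \Eig_{\lambda_i}(a)$ and the $\lambda_i$ distinct. By Proposition \ref{prop:non-neg} this is an orthogonal decomposition, so $\norm{x}^2 = \sum_i \norm{x_i}^2$. Applying $L_a$ and using Lemma \ref{lem:eigen-decomp}, the $ax_i$ are again mutually orthogonal (they lie in distinct eigenspaces, dropping any with $\lambda_i = 0$), so $\norm{ax}^2 = \sum_i \norm{ax_i}^2$. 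By Proposition \ref{prop:eig-norm} (or directly by Lemma \ref{lem:M-L-adj}), $\norm{ax_i}^2 = \lambda_i \norm{a}^2 \norm{x_i}^2$, hence
\[
\norm{ax}^2 = \norm{a}^2 \sum_i \lambda_i \norm{x_i}^2.
\]

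For part (1), I would bound $\sum_i \lambda_i \norm{x_i}^2$ between $\lambda^-_a \sum_i \norm{x_i}^2$ and $\lambda^+_a \sum_i \norm{x_i}^2$, since every eigenvalue $\lambda_i$ satisfies $\lambda^-_a \leq \lambda_i \leq \lambda^+_a$; then $\sum_i \norm{x_i}^2 = \norm{x}^2$ gives the claim after taking square roots. For parts (2) and (3), the point is that equality in the upper (resp.\ lower) estimate forces $\lambda_i = \lambda^+_a$ (resp.\ $\lambda_i = \lambda^-_a$) for every index $i$ with $x_i \neq 0$: if some $x_j \neq 0$ had $\lambda_j < \lambda^+_a$, then $\sum_i \lambda_i \norm{x_i}^2 < \lambda^+_a \norm{x}^2$ strictly, contradicting equality. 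Conversely, if $x \in \Eig_{\lambda^+_a}(a)$ then the eigendecomposition has a single term and the formula above gives $\norm{ax}^2 = \lambda^+_a \norm{a}^2 \norm{x}^2$ directly. The case $\norm{x} = 0$ is trivial and can be noted in passing.

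I do not expect any serious obstacle here; the result is essentially the statement that a symmetric positive semidefinite operator's Rayleigh-type quotient is controlled by its extreme eigenvalues, with equality characterizing the extreme eigenspaces. The only point requiring a little care is the bookkeeping when $0$ is an eigenvalue of $a$: in that case some $ax_i$ vanishes, but since $\lambda_i \norm{x_i}^2 = 0$ for that term as well, it contributes nothing to either side of the identity $\norm{ax}^2 = \norm{a}^2 \sum_i \lambda_i \norm{x_i}^2$, so the argument goes through unchanged. Alternatively, one could prove (1) more slickly by noting $\norm{ax}^2 = \norm{a}^2 \langle M_a x, x\rangle_\R$ from Lemma \ref{lem:M-L-adj} and diagonalizing $M_a$ (Proposition \ref{prop:non-neg}), but the eigendecomposition phrasing makes the equality cases in (2) and (3) most transparent, so that is the route I would take.
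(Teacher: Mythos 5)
Your proof is correct and follows essentially the same route as the paper: eigendecompose $x$ with respect to $a$, use orthogonality of eigenspaces (Proposition \ref{prop:non-neg}) and the norm formula from Proposition \ref{prop:eig-norm} to get $\norm{ax}^2 = \norm{a}^2 \sum_i \lambda_i \norm{x_i}^2$, then bound this sum and analyze the equality cases. Your extra remark on the bookkeeping when $0$ is an eigenvalue is a point the paper handles implicitly by noting that a term "must be dropped" from the eigendecomposition of $ax$; both treatments are equivalent.
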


\begin{proof}
Let $x=x_1+\cdots+x_k$ be the eigendecomposition of $x$ with respect to $a$,
so $ax=ax_1+\cdots+ax_k$ is the eigendecomposition of $ax$ with respect to $a$
by Lemma \ref{lem:eigen-decomp}
(except possibly that one term must be dropped).
Let $\lambda_i$ be the eigenvalue of $x_i$ with respect to $a$.
Using Proposition \ref{prop:eig-norm} and using that
eigendecompositions are orthogonal decompositions by
Proposition \ref{prop:non-neg},
we have
\begin{align*}
 \norm{ax}^2=\norm{ax_1}^2 + \cdots + \norm{ax_k}^2  
&= \lambda_1 \norm{a}^2\norm{x_1}^2 + \cdots + 
\lambda_k \norm{a}^2\norm{x_k}^2 \\
&\leq \lambda^+_a \norm{a}^2 
\left( \norm{x_1}^2+\cdots + \norm{x_k}^2 \right) \\
&= \lambda^+_a  \norm{a}^2 \norm{x}^2,
\end{align*}
where equality holds if and only if $x$ belongs to $\Eig_{\lambda_a^+}(a)$.
This proves half of part (1) and also part (2).

The remaining parts of the lemma, involving $\lambda^-_a$,
are derived similarly. 
\end{proof}

\begin{lemma} 
\label{lem:eig-boundnorm}
Let $a$ belong to $A_n$.  For any $x$ in $A_n$,
\[
\sqrt{\lambda^-_a} \norm{a} \norm{x} \leq \norm{ax} \leq 
 \sqrt{\lambda^+_a}\norm{a} \norm{x}.
\]
\end{lemma}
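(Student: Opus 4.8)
The plan is to observe that this lemma is nothing more than part (1) of Proposition \ref{prop:eig-boundnorm}, which was just proved. So the entire content of the proof is a single-sentence citation.

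More precisely, I would write: ``This is part (1) of Proposition \ref{prop:eig-boundnorm}.'' There is genuinely nothing to do here; the statement has been isolated as a standalone lemma presumably because it is the part that gets cited repeatedly in later sections (or in the companion papers \cite{DDD}, \cite{DDDD}), whereas the sharper parts (2) and (3) about when equality holds are needed less often.

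If one wanted to be slightly more self-contained rather than citing the proposition, one could instead recall the one-line argument: take the eigendecomposition $x = x_1 + \cdots + x_k$ with respect to $a$, use Proposition \ref{prop:eig-norm} to get $\norm{ax_i}^2 = \lambda_i \norm{a}^2 \norm{x_i}^2$ for each $i$, use orthogonality of the eigendecomposition (Proposition \ref{prop:non-neg}) to sum these as $\norm{ax}^2 = \sum_i \lambda_i \norm{a}^2 \norm{x_i}^2$, and then bound each $\lambda_i$ between $\lambda^-_a$ and $\lambda^+_a$ while using $\sum_i \norm{x_i}^2 = \norm{x}^2$. But this simply reproduces the proof of Proposition \ref{prop:eig-boundnorm}, so the clean move is just to cite it.

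I do not anticipate any obstacle: the ``hard part'' was already carried out in the preceding proposition, and extracting part (1) as a separate lemma is a purely organizational step.

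\begin{proof}
This is part (1) of Proposition \ref{prop:eig-boundnorm}.
\end{proof}
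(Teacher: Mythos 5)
Your proposal is correct. The paper's own proof of this lemma is in fact empty (the \verb|\begin{proof}...\end{proof}| contains nothing), which confirms your reading: the lemma is just part (1) of Proposition \ref{prop:eig-boundnorm} restated for convenient citation, and a one-line reference to that proposition is exactly the right move.
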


\begin{proof}
\end{proof}

\begin{prop}
\label{prop:eig-maxboundnorm}
Let $a$ belong to $A_n$.
Then $x$ belongs to $\Eig_{\lambda^+_a}(a)$ if and only if
$\norm{ax}=\sqrt{\lambda^+_a} \norm{a} \norm{x}$.
Also, $x$ belongs to $\Eig_{\lambda^-_a}(a)$ if and only if
$\norm{ax}=\sqrt{\lambda^-_a} \norm{a} \norm{x}$.
\end{prop}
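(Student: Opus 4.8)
The plan is to deduce Proposition~\ref{prop:eig-maxboundnorm} from Proposition~\ref{prop:eig-boundnorm}, which already contains the statement as parts (2) and (3). So the "proof" is essentially a pointer: parts (2) and (3) of Proposition~\ref{prop:eig-boundnorm} say exactly that $\norm{ax}=\sqrt{\lambda^+_a}\norm{a}\norm{x}$ holds if and only if $x\in\Eig_{\lambda^+_a}(a)$, and similarly for $\lambda^-_a$. Thus one would simply invoke that proposition and be done.

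The only subtlety — and the step I would actually write out — is the degenerate case $a=0$, where the statement is vacuous or needs a convention (the norm inequalities read $0\le 0\le 0$ and every $x$ trivially satisfies the equality); one should note that $\Eig$ and $\lambda^\pm$ were only defined for non-zero $a$, so we assume $a\neq 0$ throughout, matching the standing convention of the section. A second minor point worth a sentence: when $0$ is an eigenvalue and $\lambda^-_a=0$, the forward direction of part (3) still makes sense because $\norm{ax}=0$ forces $ax=0$, hence $x\in\ker L_a=\ker M_a=\Eig_0(a)$ by Lemma~\ref{lem:M_a-zd}; but this is already subsumed in Proposition~\ref{prop:eig-boundnorm}, so no new argument is needed.

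I do not anticipate a genuine obstacle here: the content is identical to the earlier proposition, and the present statement is presumably stated separately only for ease of reference. If one wanted a self-contained argument rather than a citation, one would repeat the eigendecomposition argument: write $x=x_1+\cdots+x_k$ with $x_i\in\Eig_{\lambda_i}(a)$, use orthogonality of the $x_i$ (Proposition~\ref{prop:non-neg}) together with Proposition~\ref{prop:eig-norm} to get $\norm{ax}^2=\sum_i\lambda_i\norm{a}^2\norm{x_i}^2\le\lambda^+_a\norm{a}^2\norm{x}^2$, and observe that equality forces $\lambda_i=\lambda^+_a$ for every $i$ with $x_i\neq 0$, i.e.\ $x\in\Eig_{\lambda^+_a}(a)$; the $\lambda^-_a$ case is symmetric. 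Either way the write-up is a single short paragraph.
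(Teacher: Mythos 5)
Your proposal is correct, and your main observation is a sharp one: Proposition~\ref{prop:eig-maxboundnorm} is word-for-word contained in parts (2) and (3) of the earlier Proposition~\ref{prop:eig-boundnorm}, so a bare citation to that result would suffice (and in fact exposes a structural redundancy in the text---note also the intervening Lemma~\ref{lem:eig-boundnorm}, which restates part (1) with an empty proof). The paper does not take the citation shortcut; instead it re-runs the argument from scratch, and its proof is essentially the fallback you sketch at the end: one direction is immediate from Proposition~\ref{prop:eig-norm}, and for the converse one takes the eigendecomposition $x=x_1+\cdots+x_k$, uses Lemma~\ref{lem:eigen-decomp} to see $ax=ax_1+\cdots+ax_k$ is the eigendecomposition of $ax$ (hence an orthogonal sum), applies Proposition~\ref{prop:eig-norm} termwise, and rearranges $\norm{ax}^2 = \lambda^+_a\norm{a}^2\norm{x}^2$ into $\sum_i(\lambda^+_a-\lambda_i)\norm{x_i}^2=0$, forcing $x_i=0$ whenever $\lambda_i\neq\lambda^+_a$. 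Your sketch compresses this slightly (you cite Proposition~\ref{prop:non-neg} for orthogonality but the step $\norm{ax}^2=\sum\norm{ax_i}^2$ really wants orthogonality of the $ax_i$, which is where Lemma~\ref{lem:eigen-decomp} or Lemma~\ref{lem:L-restrict} comes in), but the reasoning is sound and matches the paper. Your aside about $a\neq 0$ and the $\lambda^-_a=0$ case is unobjectionable but unnecessary, since $M_a$ and hence $\lambda^\pm_a$ are only defined for $a\neq 0$ and the $\lambda^-_a$ case is handled symmetrically.
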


The reader should compare this result to Proposition \ref{prop:eig-norm}.
We are claiming that 
for the minimum and maximum eigenvalues, the second condition is redundant.

\begin{proof}
We give the proof of the first statement; the proof of the second
statement is the same.

One direction is an immediate consequence of Proposition \ref{prop:eig-norm}.
For the other direction,
suppose that $\norm{ax}=\sqrt{\lambda^+_a} \norm{a} \norm{x}$.   
Let $x=x_1+\cdots+x_k$ be the eigendecomposition of $x$ with
respect to $a$, so 
$ax=ax_1+\cdots+ax_k$ is the eigendecomposition of $ax$ with
respect to $a$ (except possibly that one term must be dropped).
Let $\lambda_i$ be the eigenvalue of $x_i$ with respect to $a$.

By Proposition \ref{prop:eig-norm}, we have
\begin{align*}
\lambda^+_a \norm{a}^2 \left( \norm{x_1}^2+\cdots+\norm{x_k}^2 \right) =
\lambda^+_a \norm{a}^2\norm{x}^2 
&=\norm{ax}^2\\
&=\norm{ax_1}^2+\cdots+\norm{ax_k}^2 \\
&=\norm{a}^2 \left( \lambda_1 \norm{x_1}^2+\cdots + 
\lambda_k \norm{x_k}^2 \right).
\end{align*}
Rearrange this equality to get
\[ (\lambda^+_a-\lambda_1)\norm{x_1}^2+\cdots + 
   (\lambda^+_a-\lambda_k)\norm{x_k}^2=0.
\]
The coefficients $\lambda^+_a-\lambda_i$ are all positive except for the
one value of $j$ for which $\lambda_j=\lambda^+_a$.  
It follows that $x_i = 0$ for $i \neq j$ and hence
$x=x_j$, so $x$ belongs to $\Eig_{\lambda^+_a}(a)$.  
\end{proof}

\begin{prop}
\label{prop:eig-double}
Let $a = (b,c)$ belong to $A_n$, where $b$ and $c$ are elements of $A_{n-1}$.
Then $\lambda^+_{a} \leq 2\max\{\lambda^+_b,\lambda^+_c\}$.
\end{prop}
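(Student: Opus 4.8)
The plan is to use the norm characterization of the maximum eigenvalue from Proposition~\ref{prop:eig-maxboundnorm}, together with the explicit Cayley-Dickson multiplication formula, to bound $\norm{ax}$ in terms of $\norm{x}$. Write $x = (u,v)$ with $u,v \in A_{n-1}$, so that $\norm{x}^2 = \norm{u}^2 + \norm{v}^2$. Using the multiplication rule $(b,c)(u,v) = (bu - v^*c,\ vb + cu^*)$, I would first estimate
\[
\norm{ax}^2 = \norm{bu - v^*c}^2 + \norm{vb + cu^*}^2.
\]
Applying the triangle inequality to each component and then the inequality $(p+q)^2 \leq 2p^2 + 2q^2$ gives
\[
\norm{ax}^2 \leq 2\big(\norm{bu}^2 + \norm{v^*c}^2 + \norm{vb}^2 + \norm{cu^*}^2\big).
\]
Here I would use Lemma~\ref{lem:norm-comm} to rewrite $\norm{v^*c} = \norm{cv}$ (since $\norm{v^*c} = \norm{vc} = \norm{cv}$) and $\norm{vb} = \norm{bv}$, $\norm{cu^*} = \norm{cu}$, so that every term is of the form $\norm{b\,(\cdot)}$ or $\norm{c\,(\cdot)}$, ready for Proposition~\ref{prop:eig-boundnorm}(1) applied inside $A_{n-1}$.

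Next, the key step: by Proposition~\ref{prop:eig-boundnorm}(1) in $A_{n-1}$ (or by Lemma~\ref{lem:eig-boundnorm}), $\norm{bw} \leq \sqrt{\lambda^+_b}\,\norm{b}\,\norm{w}$ and $\norm{cw} \leq \sqrt{\lambda^+_c}\,\norm{c}\,\norm{w}$ for all $w$. Substituting, and writing $\mu = \max\{\lambda^+_b,\lambda^+_c\}$, I get
\[
\norm{ax}^2 \leq 2\mu\big(\norm{b}^2\norm{u}^2 + \norm{c}^2\norm{v}^2 + \norm{b}^2\norm{v}^2 + \norm{c}^2\norm{u}^2\big)
= 2\mu\,(\norm{b}^2 + \norm{c}^2)(\norm{u}^2 + \norm{v}^2),
\]
which is exactly $2\mu\,\norm{a}^2\,\norm{x}^2$. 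Taking $x$ to be a unit vector in $\Eig_{\lambda^+_a}(a)$, the left side is $\lambda^+_a\,\norm{a}^2$, so $\lambda^+_a \leq 2\mu$ after cancelling $\norm{a}^2$ (assuming $a \neq 0$; the statement is vacuous or trivial otherwise).

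I expect the main obstacle to be nothing deep, but rather getting the norm bookkeeping right: the multiplication formula mixes $b,c$ with $u,v$ and their conjugates, and one must be careful that Lemma~\ref{lem:norm-comm} genuinely converts each cross term into the form $\norm{(\text{scalar-slot element})\cdot(\text{vector})}$ with the \emph{same} $b$ or $c$ on the left, so that the $A_{n-1}$ eigenvalue bounds apply. One subtlety worth checking is whether $b$ or $c$ might be zero (e.g. $a = (b,0)$); in that case $\lambda^+_b$ may be undefined, but then $a \in A_{n-1}$ and the relevant terms drop out, so the bound $\lambda^+_a \leq 2\lambda^+_c$ — or just $\lambda^+_a = \lambda^+_b$ — still holds and the argument goes through with the degenerate terms omitted. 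No step requires more than the triangle inequality, the elementary inequality $(p+q)^2 \le 2(p^2+q^2)$, and the cited results.
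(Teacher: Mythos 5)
Your proof is correct and takes essentially the same route as the paper: expand the Cayley--Dickson product, bound each component by the triangle inequality together with the elementary estimate $2pq \le p^2+q^2$ (your $(p+q)^2 \le 2(p^2+q^2)$ is the same fact, applied a step earlier to avoid carrying cross terms), apply Lemma~\ref{lem:eig-boundnorm} inside $A_{n-1}$ after using Lemma~\ref{lem:norm-comm} to normalize the factors, and conclude via the norm characterization of $\lambda^+_a$. Your remark about the degenerate case $b=0$ or $c=0$ (where $M_b$ or $M_c$ is undefined) is a small point the paper leaves implicit, and you handle it correctly.
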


\begin{proof}
Let $x = (y,z)$ belong to $A_n$, and
consider the product
\[ 
ax = (b,c)(y,z) = (by-z^*c,cy^*+zb).
\]
The triangle inequality and Lemma \ref{lem:norm-comm} says that
\begin{align*}
\norm{ax}
& = \sqrt{ \norm{by-z^*c}^2 + \norm{cy^* + zb}^2 } \\
& \leq \sqrt{ \norm{by}^2 + \norm{cz^*}^2 + \norm{cy^*}^2 + \norm{bz}^2 
+ 2 \norm{by}\norm{cz^*} + 2 \norm{cy^*} \norm{bz} }.
\end{align*}
Writing $B = \norm{b}$, $C = \norm{c}$, $Y = \norm{y}$, and $Z = \norm{z}$,
repeated use of Lemma \ref{lem:eig-boundnorm} 
(and the facts that $\norm{z^*} = \norm{z}$ and $\norm{y^*} = \norm{y}$)
gives the inequality
\[
\norm{ax} \leq
\sqrt{ \lambda^+_b B^2 Y^2 + \lambda^+_c C^2 Z^2 +
\lambda^+_c C^2 Y^2 + \lambda^+_b B^2 Z^2
+ 4\sqrt{\lambda^+_b \lambda^+_c} BCYZ}.
\]
Replacing $\lambda^+_b$, $\lambda^+_c$, and $\sqrt{\lambda^+_b \lambda^+_c}$
with $\max\{\lambda^+_b, \lambda^+_c \}$, we obtain
\[
\norm{ax} \leq \sqrt{\max\{\lambda^+_b, \lambda^+_c \} }
\sqrt{ B^2 Y^2 + C^2 Z^2 + C^2 Y^2 + B^2 Z^2
+ 4 BCYZ}.
\]
Next, use the inequalities $2BC \leq B^2 + C^2$ and
$2YZ \leq Y^2 + Z^2$ to get
\[
\norm{ax} \leq 
\sqrt{\max\{\lambda^+_b, \lambda^+_c \} }
\sqrt{ 2 (B^2 + C^2) (Y^2 + Z^2)} = 
\sqrt{2\max\{\lambda^+_b, \lambda^+_c \} }\norm{a}\norm{x}.
\]
Since this inequality holds for all $x$, we conclude by
Proposition \ref{prop:eig-maxboundnorm} that 
$\lambda^+_a \leq 2\max\{\lambda^+_b, \lambda^+_c \}$.
\end{proof}

\begin{cor}
\label{cor:eig-bound}
Let $n \geq 3$.
If $a$ belongs to $A_n$, then all eigenvalues of $a$ are in the 
interval $[0,2^{n-3}]$.
\end{cor}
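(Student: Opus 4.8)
The plan is to induct on $n$, using Proposition \ref{prop:eig-double} for the inductive step. The base case is $n = 3$, where the octonions are alternative, so by Proposition \ref{prop:eig-H_a} and the discussion following Definition \ref{defn:eigen-decomp}, every element $a$ of $A_3$ has $\lambda^-_a = \lambda^+_a = 1$; in particular all eigenvalues lie in $\{0\} \cup \{1\} \subseteq [0, 1] = [0, 2^{3-3}]$. (Strictly, for $a \neq 0$ the only nonzero eigenvalue is $1$, and for $a = 0$ the statement is vacuous or trivial depending on convention; either way the eigenvalues lie in $[0,1]$.)

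For the inductive step, suppose $n \geq 4$ and that every element of $A_{n-1}$ has all eigenvalues in $[0, 2^{n-4}]$; in particular $\lambda^+_b \leq 2^{n-4}$ for every $b$ in $A_{n-1}$. Write $a = (b,c)$ with $b, c \in A_{n-1}$. By Proposition \ref{prop:eig-double},
\[
\lambda^+_a \leq 2 \max\{\lambda^+_b, \lambda^+_c\} \leq 2 \cdot 2^{n-4} = 2^{n-3}.
\]
Since all eigenvalues of $a$ are non-negative by Proposition \ref{prop:non-neg} and are at most $\lambda^+_a$, they all lie in $[0, 2^{n-3}]$, completing the induction.

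The only subtlety I anticipate is bookkeeping at the base case: Proposition \ref{prop:eig-double} is stated for $a \in A_n$ with $b, c \in A_{n-1}$, so to apply it with $n = 4$ I need the bound $\lambda^+_b \leq 2^0 = 1$ for $b \in A_3$, which is exactly the alternativity of the octonions (or, equivalently, Proposition \ref{prop:eig-H_a} combined with $\lambda^-_a \le 1 \le \lambda^+_a$). One should also note that Proposition \ref{prop:eig-double} as stated gives a bound on $\lambda^+_a$ only, but that is all that is needed since non-negativity of the remaining eigenvalues is already guaranteed by Proposition \ref{prop:non-neg}. No genuinely hard step arises; the work is entirely in the already-proved Propositions \ref{prop:non-neg}, \ref{prop:eig-H_a}, and \ref{prop:eig-double}.
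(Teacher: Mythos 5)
Your proof is correct and follows the same route as the paper: induction on $n$ with base case $n=3$ handled by alternativity of the octonions, and the inductive step supplied by Proposition \ref{prop:eig-double} together with non-negativity from Proposition \ref{prop:non-neg}. The extra bookkeeping remarks are accurate but not needed beyond what the paper's terse proof already implies.
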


\begin{proof}
The proof is by induction,
using Propositions \ref{prop:non-neg} and \ref{prop:eig-double}.
The base case is $n = 3$.
Recall that $A_3$ is alternative, so 1 is the only eigenvalue of 
any $a$ in $A_3$.  
\end{proof}

\begin{remark}
Corollary \ref{cor:eig-bound} is sharp in the following sense.
For $n \geq 4$, every real number in the interval $[0, 2^{n-3}]$
occurs as the eigenvalue of some element of $A_n$.  
See Theorem \ref{thm:eig-exist} for more details.
\end{remark}


\section{Cross-product}

\begin{defn}
\label{defn:cross}
Given $a$ and $b$ in $A_n$, let the \mdfn{cross-product $a \times b$}
be the imaginary part of $ab^*$.
\end{defn}

If $\R^3$ is identified with the imaginary part of $A_2$,
then this definition restricts to the usual notion of cross-product 
in physics.  The cross-product has also been previously studied
for $A_3$; see \cite[Sec.~4.1]{B} for example.
We shall see that cross-products are indispensible in describing 
eigenvalues and eigenvectors, especially for $A_4$.

\begin{lemma}
\label{lem:cross}
Let $a$ and $b$ belong to $A_n$, and let $\theta$ be the angle
between $a$ and $b$.  If $b$ belongs to $\Eig_1(a)$ or $a$ 
belongs to $\Eig_1(b)$, then 
\[
\norm{a \times b} = \norm{a} \norm{b} \sin \theta.
\]
\end{lemma}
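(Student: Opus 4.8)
The plan is to reduce to a norm computation using the identity $\norm{ab^*}^2 = \norm{a}^2\norm{b}^2$, which would follow from Proposition \ref{prop:eig-norm} once we know $b\in\Eig_1(a)$ (or symmetrically $a\in\Eig_1(b)$, using $\norm{ab^*}=\norm{ba^*}$ from Lemma \ref{lem:norm-comm} together with the fact that conjugation preserves norms). Indeed, if $b\in\Eig_1(a)$ then Proposition \ref{prop:eig-norm} with $\lambda=1$ gives $\norm{ab}=\norm{a}\norm{b}$, and combining with Lemma \ref{lem:norm-comm} (which says $\norm{ab}=\norm{ab^*}$) yields $\norm{ab^*}=\norm{a}\norm{b}$. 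So the ``hypotenuse'' $ab^*$ has length exactly $\norm{a}\norm{b}$.

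Next I would split $ab^*$ into its real and imaginary parts, which are orthogonal: $ab^* = \Re(ab^*) + \Im(ab^*) = \langle a,b\rangle_{\R} + (a\times b)$, using the definition $\langle a,b\rangle_{\R}=\Re(ab^*)$ and Definition \ref{defn:cross}. By the Pythagorean theorem,
\[
\norm{a\times b}^2 = \norm{ab^*}^2 - \langle a,b\rangle_{\R}^2 = \norm{a}^2\norm{b}^2 - \langle a,b\rangle_{\R}^2.
\]
Since $\theta$ is the angle between $a$ and $b$, we have $\langle a,b\rangle_{\R} = \norm{a}\norm{b}\cos\theta$, so the right-hand side equals $\norm{a}^2\norm{b}^2(1-\cos^2\theta) = \norm{a}^2\norm{b}^2\sin^2\theta$. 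Taking square roots (and noting $\sin\theta\geq 0$ for $\theta\in[0,\pi]$) gives the claim.

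The only real content is the equality $\norm{ab^*}=\norm{a}\norm{b}$, so the main obstacle is handling the two alternative hypotheses uniformly. If $b\in\Eig_1(a)$ the argument above is direct. If instead $a\in\Eig_1(b)$, I would apply the same reasoning to get $\norm{ba^*}=\norm{b}\norm{a}$, then observe $\norm{ab^*}=\norm{(ab^*)^*}=\norm{ba^*}$ since conjugation is norm-preserving; alternatively one can invoke $\norm{ab^*}=\norm{ba^*}$ directly as an instance of the $\norm{xy}=\norm{yx}$ part of Lemma \ref{lem:norm-comm}. Either way the two cases collapse to the same norm identity, and the rest is the elementary Pythagorean computation.
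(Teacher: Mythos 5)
Your proposal is correct and follows essentially the same route as the paper: orthogonally decompose $ab^*$ into real and imaginary parts, use Proposition \ref{prop:eig-norm} to establish $\norm{ab^*}=\norm{a}\norm{b}$, and finish by the Pythagorean identity. Your handling of the two alternative hypotheses via Lemma \ref{lem:norm-comm} is a bit more explicit than the paper, which applies Proposition \ref{prop:eig-norm} more tersely, but the substance is identical.
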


\begin{proof}
Note that $ab^* = \Re(ab^*) + \Im(ab^*)$ is an orthogonal
decomposition of $ab^*$.  Therefore,
\[
\norm{\Im(ab^*)}^2 = \norm{ab^*}^2 - \norm{\Re(ab^*)}^2 =
\norm{ab^*}^2 - \langle a, b \rangle_{\R}^2.
\]
By Proposition \ref{prop:eig-norm}, we know that
$\norm{ab^*}^2 = \norm{a}^2 \norm{b^*}^2 = \norm{a}^2 \norm{b}^2$, 
so we get that
\[
\norm{\Im(ab^*)}^2 = \norm{a}^2 \norm{b}^2 - 
\norm{a}^2 \norm{b}^2 \cos^2 \theta =
\norm{a}^2 \norm{b}^2 \sin^2 \theta.
\]
\end{proof}

\begin{lemma}
\label{lem:cross2}
Let $a$ and $b$ belong to $A_n$
such that $b$ belongs to $\Eig_1(a)$ or $a$ 
belongs to $\Eig_1(b)$.  Then
$\norm{a \times b} \leq \frac{1}{2}( \norm{a}^2 + \norm{b}^2)$.
Moreover, $\norm{a \times b} = \frac{1}{2}(\norm{a}^2 + \norm{b}^2)$ 
if and only if $a$
and $b$ are orthogonal and have the same norm.
Also $a \times b = 0$ if and only if $a$ and $b$ are linearly dependent.
\end{lemma}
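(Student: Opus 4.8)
The plan is to reduce everything to Lemma~\ref{lem:cross}, which under the stated hypothesis identifies $\norm{a \times b}$ with $\norm{a}\norm{b}\sin\theta$, where $\theta$ is the angle between $a$ and $b$. First I would dispose of the degenerate cases: if $a = 0$ or $b = 0$, then $ab^* = 0$, so $a \times b = 0$ and all three claims are immediate. (For the equality claim, observe that if both $a$ and $b$ are zero they are vacuously orthogonal of equal norm, while if exactly one is zero the inequality is strict and the norms differ; for the last claim, $0$ is linearly dependent with anything.) So from now on assume $a$ and $b$ are both nonzero, so $\theta$ is well defined and Lemma~\ref{lem:cross} applies.

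For the inequality I would simply chain $\norm{a \times b} = \norm{a}\norm{b}\sin\theta \leq \norm{a}\norm{b} \leq \frac{1}{2}(\norm{a}^2 + \norm{b}^2)$, where the first step uses $\sin\theta \leq 1$ and the second is the AM--GM inequality $\norm{a}\norm{b} \leq \frac{1}{2}(\norm{a}^2 + \norm{b}^2)$, equivalently $0 \leq \frac{1}{2}(\norm{a} - \norm{b})^2$. Equality in the conclusion forces equality at both steps, hence $\sin\theta = 1$ (that is, $a$ and $b$ are orthogonal) and $\norm{a} = \norm{b}$. Conversely, if $a$ and $b$ are orthogonal with $\norm{a} = \norm{b}$, then $\sin\theta = 1$ and $\norm{a}\norm{b} = \frac{1}{2}(\norm{a}^2 + \norm{b}^2)$, so equality holds; this proves the second assertion.

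For the last assertion, again by Lemma~\ref{lem:cross} we have $a \times b = 0$ exactly when $\sin\theta = 0$, i.e.\ when $\theta \in \{0, \pi\}$, which for nonzero $a$ and $b$ happens precisely when $b$ is a (nonzero) real scalar multiple of $a$, hence exactly when $a$ and $b$ are linearly dependent. Together with the degenerate cases handled above, this gives the equivalence in general. Alternatively one can check the ``if'' direction directly: if $b = ca$ with $c \in \R$, then $ab^* = c\,aa^* = c\norm{a}^2$ is real, so $\Im(ab^*) = 0$.

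I expect no serious obstacle here; the only points requiring care are the bookkeeping of the zero-vector cases and correctly separating the single equality condition into the two independent conditions (orthogonality and equal norm) coming from the two steps of the inequality chain.
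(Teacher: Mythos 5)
Your proposal is correct and follows essentially the same approach as the paper: both reduce to Lemma \ref{lem:cross} and then chain $\norm{a}\norm{b}\sin\theta \leq \norm{a}\norm{b} \leq \tfrac{1}{2}(\norm{a}^2+\norm{b}^2)$, identifying the equality and vanishing conditions from there. The only difference is that you spell out the zero-vector bookkeeping a bit more explicitly, which the paper leaves implicit.
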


\begin{proof}
The inequality follows from Lemma \ref{lem:cross} together with the
simple observation that
\[
\norm{a}^2 + \norm{b}^2 \geq 2\norm{a} \norm{b} \geq 
2\norm{a} \norm{b} \sin \theta.
\]
It then follows that $\norm{a \times b} = \frac{1}{2}(\norm{a}^2 + \norm{b}^2)$
if and only if
$\norm{a}^2 + \norm{b}^2 = 2\norm{a}\norm{b}$ and $\sin \theta = 1$.  These
two conditions occur if and only if $\norm{a} = \norm{b}$ and 
$\theta = \frac{\pi}{2}$.

Finally, Lemma \ref{lem:cross} shows that $a \times b = 0$ if and only if
$a = 0$, $b = 0$, $\theta = 0$, or $\theta = \pi$.  
\end{proof}

\begin{lemma}
\label{lem:cross3}
Let $a$ belong to $\C_{n-1}^\perp$, and let $\alpha$ and $\beta$
belong to $\C_{n-1}$.  Then 
\[
\alpha a \times \beta a = \norm{a}^2  (\alpha \times \beta).
\]
\end{lemma}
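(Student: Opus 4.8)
The plan is to reduce the statement to a clean identity for the product $(\alpha a)(\beta a)^*$ and then take imaginary parts. First I would record the structural facts that make the manipulation possible. Since $\C_{n-1}$ contains $1$, an element $a$ of $\C_{n-1}^\perp$ is orthogonal to $1$, hence imaginary, so $a^* = -a$ and $a^2 = aa = -\norm{a}^2$. Moreover $\C_{n-1}^\perp$ is a $\C_{n-1}$-vector space, so $\alpha a$ and $\beta a$ also lie in $\C_{n-1}^\perp$ and are in particular imaginary; thus $(\beta a)^* = -\beta a$ and therefore $(\alpha a)(\beta a)^* = -(\alpha a)(\beta a)$. (The case $a = 0$ is trivial, both sides being zero.)

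Next I would compute $(\alpha a)(\beta a)$ using the conjugate-linearity rules of Exercise \ref{ex:multiply}, applied inside the Cayley-Dickson algebra $A_{n-1}$ with $\C_{n-1}$ in the role that $\C_n$ plays there. By Exercise \ref{ex:multiply}(3) (with $a$, $\alpha$, and $b = \beta a$ as inputs), $(\alpha a)(\beta a) = \bigl(a(\beta a)\bigr)\alpha$. By Exercise \ref{ex:multiply}(2) (with $a$, $\beta$, and $b = a$), $a(\beta a) = \beta^*(aa) = \beta^*(-\norm{a}^2) = -\norm{a}^2\beta^*$. Substituting back, and using that $\C_{n-1}$ is commutative, $(\alpha a)(\beta a) = (-\norm{a}^2\beta^*)\alpha = -\norm{a}^2\,\alpha\beta^*$, hence $(\alpha a)(\beta a)^* = \norm{a}^2\,\alpha\beta^*$. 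Taking imaginary parts and pulling out the real scalar $\norm{a}^2$ gives $\alpha a \times \beta a = \Im\bigl((\alpha a)(\beta a)^*\bigr) = \norm{a}^2\,\Im(\alpha\beta^*) = \norm{a}^2(\alpha\times\beta)$, as claimed.

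I do not expect a genuine obstacle here: the content is bookkeeping with the anti-associative, conjugate-linear multiplication rules of Exercise \ref{ex:multiply}. The points requiring care are keeping track of which factors acquire a conjugate when those rules are invoked, noting that $\C_{n-1}^\perp$ is closed under left multiplication by $\C_{n-1}$ so that an intermediate expression such as $\beta a$ is still eligible for those rules, and observing that orthogonality to $1$ forces $a$ (and hence $\alpha a$ and $\beta a$) to be imaginary, which is exactly what makes the conjugations collapse.
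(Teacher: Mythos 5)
Your proof is correct and takes essentially the same route as the paper: the paper's one-line proof simply cites Lemma \ref{lem:C-conj-linear} to obtain $(\alpha a)(\beta a)^* = \norm{a}^2\,\alpha\beta^*$ and then takes imaginary parts, and your argument is just a careful unwinding of that computation via the consequences recorded in Exercise \ref{ex:multiply} (together with the observations that $a$, $\alpha a$, $\beta a$ are imaginary and that $\C_{n-1}$ is commutative). Nothing is missing; the only difference is the level of detail.
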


\begin{proof}
Use Lemma \ref{lem:C-conj-linear} to 
compute that 
$(\alpha a)(\beta a)^* = \norm{a}^2 \alpha \beta^*$.
\end{proof}

\begin{lemma}
\label{lem:cross-product-ortho}
Let $a$ and $b$ be imaginary elements of $A_n$.  Then
$a \times b$ is orthogonal to both $a$ and $b$.
\end{lemma}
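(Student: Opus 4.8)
The plan is to reduce the statement to a single inner-product computation, using the formula $a \times b = \Im(ab^*)$ and the fact that for imaginary $a$ and $b$ we have $a^* = -a$ and $b^* = -b$, so that $ab^* = -ab$ and hence $a \times b = -\Im(ab)$. To show $a \times b$ is orthogonal to $a$, I must show $\langle \Im(ab), a \rangle_{\R} = 0$, and since $a$ is imaginary it suffices to show that the imaginary part of $ab$ is orthogonal to $a$; equivalently, since $a$ is imaginary, $\langle ab, a \rangle_{\R} = \langle \Im(ab), a\rangle_{\R}$, so I really just need $\langle ab, a \rangle_{\R} = 0$. By Lemma~\ref{lem:adjoint} (adjointness of $L_a$), $\langle ab, a \rangle_{\R} = \langle a \cdot b, a \rangle_{\R} = \langle b, a^* a \rangle_{\R}$. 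Now $a^* a = \norm{a}^2$ is real, while $b$ is imaginary, so this pairing vanishes. This disposes of orthogonality to $a$.

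For orthogonality to $b$, the cleanest route is symmetry: note that $b \times a = \Im(ba^*) = -\Im((ab^*)^*)$, and since conjugation negates imaginary parts, $b \times a = \Im(ab^*) = a \times b$ — wait, one must check the sign carefully. Actually $(ab^*)^* = b a^*$, and $\Im(w^*) = -\Im(w)$, so $\Im(ba^*) = \Im((ab^*)^*) = -\Im(ab^*)$, giving $b \times a = -(a \times b)$. Thus $a \times b$ is orthogonal to $a$ if and only if $b \times a$ is orthogonal to $a$, but applying the first half of the argument with the roles of $a$ and $b$ exchanged shows $b \times a$ is orthogonal to $b$; combining, $a \times b = -(b\times a)$ is orthogonal to $b$ as well. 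So the whole proof is: establish $\langle ab, a\rangle_{\R} = 0$ via adjointness and the reality of $a^*a$, observe that this gives orthogonality of $a\times b = -\Im(ab)$ to $a$ (using that $a$ is imaginary to discard the real part of $ab$), and then invoke the antisymmetry $a\times b = -(b\times a)$ to get orthogonality to $b$.

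I do not anticipate a serious obstacle here — the only thing to be careful about is bookkeeping with the various sign conventions: the definition $a \times b = \Im(ab^*)$, the identity $\Im(x) = x - \Re(x)$ from the paper, the relation $(xy)^* = y^* x^*$, and the fact that for imaginary $a$ one has $a^* = -a$ so that $\langle \Im(ab), a\rangle_{\R} = \langle ab, a\rangle_{\R}$ because the real part of $ab$ is a real multiple of $1$, which is orthogonal to the imaginary element $a$. If desired, one could also handle orthogonality to $b$ directly rather than by antisymmetry: $\langle a\times b, b\rangle_\R = \langle \Im(ab^*), b\rangle_\R = \langle ab^*, b\rangle_\R$ (again using $b$ imaginary), and $\langle ab^*, b\rangle_\R = \langle a, b\,b^*\rangle_\R$ is pairing $a$ against the real number $\norm{b}^2$, hence zero. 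Either version is a three-line computation; I would present the direct one for both $a$ and $b$ to avoid the antisymmetry digression.
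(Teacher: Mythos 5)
Your proof is correct and follows essentially the same route as the paper's: both reduce to the real-inner-product computations $\langle ab, a\rangle_\R = \langle b, a^*a\rangle_\R = 0$ (and likewise for $b$) via the adjointness of Lemma~\ref{lem:adjoint}, then discard the real part $\Re(ab)$ using that $a$ and $b$ are imaginary, and finally use $b^* = -b$ to pass between $\Im(ab)$ and $\Im(ab^*)$; the paper merely packages the first step through Lemma~\ref{lem:ortho1} rather than rederiving it inline. One small bookkeeping slip in your direct check of orthogonality to $b$: adjointness of $R_{b^*}$ gives $\langle ab^*, b\rangle_\R = \langle a, R_b(b)\rangle_\R = \langle a, b^2\rangle_\R$, not $\langle a, bb^*\rangle_\R$; since $b$ is imaginary, $b^2 = -\norm{b}^2$ is still real, so the conclusion is unaffected, but the intermediate expression as written is off by a conjugate.
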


\begin{proof}
Lemma \ref{lem:ortho1} says that $ab$ is orthogonal to both $a$ and $b$.
Also, $a$ and $b$ are orthogonal to $\Re(ab)$ because they are 
imaginary.  Therefore, $a$ and $b$ are orthogonal to $\Im(ab) = ab - \Re(ab)$.
Finally, observe that $\Im(ab^*) = -\Im(ab)$ because $b$ is imaginary.
\end{proof}

In $\HH$ and $\OO$, cross products are useful for producing unit vectors
that are orthogonal to two given vectors.  
Unfortunately, cross products
are not as useful in the higher Cayley-Dickson algebras.
Even though $a \times b$ is always orthogonal to $a$ and $b$ by the previous
lemma, beware that $a \times b$ may equal zero.


\section{Eigenvalues and basic constructions}

Throughout this section, the reader should keep the following
ideas in mind.
We will consider elements of $A_n$ of the form
$(\alpha a, \beta a)$, where $a$ belongs to $\C_{n-1}^\perp$
and $\alpha$ and $\beta$ belong to $\C_{n-1}$.  Under these
circumstances, Lemma \ref{lem:cross3} applies, and we conclude
that $\alpha a \times \beta a$ always belongs to $\C_{n-1}$.  
Moreover, since $\alpha a \times \beta a$
is imaginary, it is in fact an $\R$-multiple of $i_{n-1}$.  
Even more precisely, $\alpha a \times \beta a$
equals $\pm \norm{a}^2 \norm{\alpha \times \beta} i_{n-1}$.

For $a$ in $\C_{n-1}^\perp$, recall from Section \ref{subsctn:subalg}
that $\llangle a,i_{n-1}\rrangle$ is the subalgebra of $A_{n-1}$ 
generated by $a$ and $i_{n-1}$.  It is isomorphic to $\HH$.

\begin{lemma}
\label{lem:mult-alpha}
Let $a$ belong to $\C_{n-1}^\perp$, and let $\alpha$ and $\beta$
belong to $\C_{n-1}$.  If $x$ and $y$ are orthogonal to
$\llangle a,i_{n-1}\rrangle$, then
$(\alpha a, \beta a) \cdot (\alpha a, \beta a) (x,y)$ equals
\[
\Big( (\norm{\alpha}^2 + \norm{\beta}^2) a \cdot ax + 
   2 (\alpha \times \beta) (a \cdot ay),
(\norm{\alpha}^2 + \norm{\beta}^2) a \cdot ay - 
   2 (\alpha \times \beta) (a \cdot ax) \Big). 
\]
\end{lemma}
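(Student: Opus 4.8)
The plan is to compute $(\alpha a, \beta a)\cdot(\alpha a,\beta a)(x,y)$ directly from the Cayley–Dickson multiplication formula, doing it in two stages. First I would apply the formula once to get $(\alpha a,\beta a)(x,y) = (\alpha a\cdot x - y^*\cdot\beta a,\ \beta a\cdot x^* + y\cdot\alpha a)$; call this $(u,v)$. Then I would apply it a second time to $(\alpha a,\beta a)(u,v)$, again using the formula, so that the two output coordinates are $\alpha a\cdot u - v^*\cdot\beta a$ and $\beta a\cdot u^* + v\cdot\alpha a$. At this stage everything is expressed in terms of products in $A_{n-1}$ of the elements $a$, $\alpha$, $\beta$ (all in the subalgebra structure near $\C_{n-1}$) and $x$, $y$ (orthogonal to $\llangle a, i_{n-1}\rrangle$).

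The key simplification comes from the hypothesis that $x$ and $y$ are orthogonal to $\llangle a,i_{n-1}\rrangle$, which is the copy of $\HH$ generated by $a$ and $i_{n-1}$; in particular $x,y$ lie in $\C_{n-1}^\perp$ and are $\C$-orthogonal to $a$. This lets me bring in the whole toolkit of conjugate-linearity: Lemma \ref{lem:C-conj-linear} (so $L_a(\alpha z) = \alpha^* L_a z$ and products pick up conjugates), Exercises \ref{ex:multiply} and \ref{ex:multiply2} (the bi-conjugate-linearity of multiplication for $\C$-orthogonal elements of $\C_{n-1}^\perp$), and the basic relations $\alpha^*\alpha = \norm{\alpha}^2$, $\beta^*\beta = \norm{\beta}^2$. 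The cross terms in $\alpha$ and $\beta$ should organize themselves into expressions involving $\alpha\beta^* - \beta\alpha^*$, which is (twice) the imaginary part of $\alpha\beta^*$, i.e. $2(\alpha\times\beta)$ by Definition \ref{defn:cross}; this is where the coefficient $2(\alpha\times\beta)$ in the statement comes from. I would also need that $a\cdot ax$ and $a\cdot ay$ remain orthogonal to $\C_{n-1}$ (equivalently $\C$-orthogonal to $a$), which follows from the $\C$-orthogonality of $a$ and $x$, so that when $(\alpha\times\beta) \in \C_{n-1}$ multiplies them the conjugate-linearity rules keep applying consistently.

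The main obstacle will be bookkeeping: reliably tracking which factors get conjugated as I move scalars in $\C_{n-1}$ past elements of $\C_{n-1}^\perp$ through a triple product, and correctly collecting the sign of each of the four types of term ($\norm{\alpha}^2$, $\norm{\beta}^2$, $\alpha\times\beta$ acting on the $x$-slot, $\alpha\times\beta$ acting on the $y$-slot) in each of the two output coordinates. There is no conceptual difficulty — every individual move is licensed by a cited lemma or exercise — but the computation has enough terms that the risk is an arithmetic slip rather than a gap in the argument. I would carry it out by expanding symbolically, grouping the terms with no $\alpha,\beta$ mismatch (which give the $(\norm{\alpha}^2+\norm{\beta}^2)$ pieces) separately from the mixed terms (which give the $\alpha\times\beta$ pieces), and checking consistency at the end against the known special case where $\alpha$ and $\beta$ are real, where the formula must reduce to the obvious doubling of $M_a$.
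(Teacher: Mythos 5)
Your proposal is correct and takes essentially the same approach as the paper: the paper's proof is just the single line ``Compute using Lemma~\ref{lem:C-conj-linear},'' and your plan — expand via the Cayley--Dickson formula twice, then move $\C_{n-1}$-scalars past elements of $\C_{n-1}^\perp$ using conjugate-linearity, collecting the diagonal terms into $\norm{\alpha}^2+\norm{\beta}^2$ and the cross terms into $\alpha\beta^*-\alpha^*\beta = 2(\alpha\times\beta)$ — is precisely that computation.
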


\begin{proof}
Compute using Lemma \ref{lem:C-conj-linear}.  This is a generalized
version of the computations in \cite[Sec.~10]{DDD}.
\end{proof}

\begin{prop}
\label{prop:eig-alpha}
Let $a$ belong to $\C_{n-1}^\perp$, and let $\alpha$ and $\beta$
belong to $\C_{n-1}$ such that $\norm{a}$ and 
$\norm{\alpha}^2 + \norm{\beta}^2$
both equal $1$ (so that $(\alpha a, \beta a)$ is a unit vector).
Suppose that $\alpha \times \beta$ is non-zero (i.e., $\alpha$ and $\beta$
are $\R$-linearly independent). Let 
$\gamma = \alpha \times \beta/\norm{\alpha \times \beta}$.
\begin{enumerate}[(a)]
\item 
$\llangle a, i_{n-1}, i_n \rrangle$ is contained in the $1$-eigenspace of 
$(\alpha a, \beta a)$;
\item
$\{(x,-\gamma x) : x\in \Eig_1(a)\cap \llangle a, i_{n-1}\rrangle^\perp\}$ 
is contained in
the $(1 + 2 \norm{\alpha \times \beta})$-eigenspace of $(\alpha a, \beta a)$;
\item
$\{(x, \gamma x) : x\in \Eig_1(a)\cap \llangle a,i_{n-1}\rrangle^\perp\}$ 
is contained in
the $(1 - 2 \norm{\alpha \times \beta})$-eigenspace of $(\alpha a, \beta a)$;
\item
$\{(x, -\gamma x): x\in \Eig_{\lambda}(a) \}$ is contained in
the $(1 + 2 \norm{\alpha \times \beta})\lambda$-eigenspace of 
$(\alpha a, \beta a)$;
\item
$\{(x, \gamma x): x\in \Eig_{\lambda}(a) \}$ is contained in
the $(1 - 2 \norm{\alpha \times \beta})\lambda$-eigenspace of 
$(\alpha a, \beta a)$;
\end{enumerate}
\end{prop}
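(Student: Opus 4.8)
The plan is to reduce everything to the explicit multiplication formula of Lemma~\ref{lem:mult-alpha}, after recording a few preliminary facts about $w := (\alpha a, \beta a)$ and $\gamma$. First I would note that $w$ is a \emph{unit imaginary} vector: the hypotheses give $\norm{w}=1$ at once, and since $a$ lies in $\C_{n-1}^\perp$ it is imaginary and $\alpha a = a\alpha^*$ by Exercise~\ref{ex:multiply}(1), so $(\alpha a)^* = \alpha a^* = -\alpha a$ and hence $w^* = (-\alpha a, -\beta a) = -w$. Consequently, since $w$ is imaginary of unit norm, an element $z$ lies in $\Eig_\mu(w)$ if and only if $w\cdot wz = -\mu z$. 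I would also record: $\alpha\times\beta = \norm{\alpha\times\beta}\,\gamma$ by definition of $\gamma$; $\gamma$ is a unit imaginary element of $\C_{n-1}\iso\C$, hence $\gamma^2 = -1$; and $\C_{n-1}$ acts associatively on $A_{n-1}$ (Lemma~\ref{lem:C-vs}), so $\gamma(\gamma x) = -x$ for all $x$. Finally, since $M_a$ is $\C_{n-1}$-linear (Lemma~\ref{lem:eig-i_n}), each $\Eig_\lambda(a)$ is a $\C_{n-1}$-subspace, so $x\in\Eig_\lambda(a)$ forces $\gamma x\in\Eig_\lambda(a)$, i.e. $a\cdot ax = -\lambda x$ and $a\cdot a(\gamma x) = -\lambda\gamma x$.

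For part (a), I would observe that $\llangle a, i_{n-1}, i_n\rrangle$ is an octonion subalgebra of $A_n$: inside $A_n = A_{n-1}\times A_{n-1}$ it is the Cayley--Dickson double of the quaternion subalgebra $\llangle a, i_{n-1}\rrangle$ of $A_{n-1}$ (here $a\notin\C_{n-1}$, so $\llangle a, i_{n-1}\rrangle\iso\HH$ by \cite[Lem.~5.6]{DDD}). Since $\alpha a$ and $\beta a$ both lie in $\llangle a, i_{n-1}\rrangle$, the vector $w = (\alpha a, 0) + (\beta a, 0)\,i_n$ lies in $\llangle a, i_{n-1}, i_n\rrangle$. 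Octonions are alternative, so for any $z$ in this subalgebra $w^*\cdot wz = (w^* w)z = \norm{w}^2 z = z$; that is, $M_w$ is the identity on $\llangle a, i_{n-1}, i_n\rrangle$, which is (a). (Alternatively one could quote Proposition~\ref{prop:eig-H_a} for the four-dimensional piece $\llangle w, i_n\rrangle$ and extend using alternativity.)

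For parts (b)--(e), fix $x\in\Eig_\lambda(a)$ and set $z := (x, -\gamma x)$ for (b),(d) and $z := (x, \gamma x)$ for (c),(e). To apply Lemma~\ref{lem:mult-alpha} I need $x$ and $\gamma x$ orthogonal to $\llangle a, i_{n-1}\rrangle$. Since $\llangle a, i_{n-1}\rrangle\subseteq\Eig_1(a)$ by Proposition~\ref{prop:eig-H_a} and distinct eigenspaces are orthogonal by Proposition~\ref{prop:non-neg}, this holds automatically when $\lambda\neq 1$ (the case covered by (d),(e)); when $\lambda = 1$ it is precisely the extra hypothesis $x\in\llangle a, i_{n-1}\rrangle^\perp$ imposed in (b),(c). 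Then Lemma~\ref{lem:mult-alpha}, combined with $\norm{\alpha}^2 + \norm{\beta}^2 = 1$, the substitution $\alpha\times\beta = \norm{\alpha\times\beta}\gamma$ with $\gamma^2 = -1$, and $a\cdot ax = -\lambda x$, $a\cdot a(\gamma x) = -\lambda\gamma x$, collapses $w\cdot wz$ to $-\lambda\bigl(1 \pm 2\norm{\alpha\times\beta}\bigr)z$, the sign being $+$ for $z=(x,-\gamma x)$ and $-$ for $z=(x,\gamma x)$. Since $w$ is a unit imaginary vector this says $z\in\Eig_{\lambda(1\pm 2\norm{\alpha\times\beta})}(w)$, giving (d) and (e); parts (b),(c) are the case $\lambda = 1$.

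I expect the only real friction to be bookkeeping rather than mathematics: keeping straight which of the orthogonality hypotheses of Lemma~\ref{lem:mult-alpha} are actually in force (this is exactly what separates (b),(c) from (d),(e)), and carrying out the reduction of the cross-product term via $\alpha\times\beta = \norm{\alpha\times\beta}\gamma$ and $\gamma^2 = -1$ carefully enough that the two coordinates of $w\cdot wz$ recombine into a single scalar multiple of $z$. Everything genuinely nonassociative has been packaged into Lemma~\ref{lem:mult-alpha}, so once the reductions above are in place each of (b)--(e) is a two-line computation.
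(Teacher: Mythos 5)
Your argument is correct and follows essentially the same route as the paper: part (a) via alternativity of the octonionic subalgebra $\llangle a, i_{n-1}, i_n\rrangle$, and parts (b)--(e) by applying Lemma~\ref{lem:mult-alpha} together with the substitution $\alpha\times\beta = \norm{\alpha\times\beta}\,\gamma$ and $\gamma^2 = -1$. Your explicit bookkeeping of when the orthogonality hypothesis of Lemma~\ref{lem:mult-alpha} holds automatically (namely $\lambda\neq 1$, via orthogonality of distinct eigenspaces) versus when it must be assumed (parts (b),(c)) makes transparent a point the paper's proof handles only implicitly by restricting to $x$ orthogonal to $\llangle a, i_{n-1}\rrangle$ from the outset.
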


\begin{proof}
Note that $\llangle a, i_{n-1}, i_n\rrangle$ is an algebra 
that contains $(\alpha a, \beta a)$ and is isomorphic to the 
octonions. 
This establishes part (a) because the octonions are alternative.  

Now suppose that $x$ is a $\lambda$-eigenvector of $a$ and
is orthogonal to $\llangle a, i_{n-1}\rrangle$.
By Lemma \ref{lem:eig-i_n}, 
$\pm \gamma x$ is also a $\lambda$-eigenvector of $a$ and is
orthogonal to $\llangle a, i_{n-1}\rrangle$.
Hence Lemma \ref{lem:mult-alpha} applies, and we compute that
$(\alpha a,\beta a) \cdot (\alpha a,\beta a)(x,\pm \gamma x)$ equals
\[
\Big( -\lambda x \mp 2 (\alpha \times \beta) \lambda \gamma x,
\mp \lambda \gamma x + 2 (\alpha \times \beta) \lambda x \Big). 
\]
Recall that $\gamma$ is an imaginary unit vector, so $\gamma^2 = - 1$.
It follows that the above expression equals
\[
-\lambda \left( 1 \mp 2\norm{\alpha \times \beta} \right) 
( x, \pm \gamma x ).
\]
Parts (b) through (e) are direct consequences of this formula.
\end{proof}

\begin{remark}
By counting dimensions, it is straightforward to check that $A_n$
is the direct sum of the subspaces listed in the proposition.  Thus,
the proposition completely describes the eigentheory of 
$(\alpha a, \beta a)$.  
Note also that $\llangle a, i_{n-1}, i_n \rrangle$ consists
of elements of the form $(x,y)$, where $x$ and $y$ both belong to
$\llangle a, i_{n-1} \rrangle$.
Finally, it is important to keep in mind that $\gamma$ always equals
$i_{n-1}$ or $-i_{n-1}$.
\end{remark}

\begin{cor}
\label{cor:gamma-not-0}
Let $a$ belong to $\C_{n-1}^\perp$, and let $\alpha$ and $\beta$
belong to $\C_{n-1}$.
Suppose that $\alpha \times \beta$ is non-zero (i.e., $\alpha$ and $\beta$
are $\R$-linearly independent). 
Every eigenvalue of $(\alpha a, \beta a)$ either equals $1$ or is
of the form
\[
\left( 1 \pm \frac{2\norm{\alpha \times \beta}}
                  {\norm{\alpha}^2 + \norm{\beta}^2} \right) \lambda,
\]
where $\lambda$ is an eigenvalue of $a$.
\end{cor}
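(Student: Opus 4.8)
The plan is to reduce to Proposition~\ref{prop:eig-alpha} by a normalization argument, and then read off the list of eigenvalues using the remark following that proposition, which asserts that the subspaces it describes span $A_n$.

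First I would use that the eigenvalues of an element are unchanged under real rescaling (as noted right after the definition of $M_a$), together with the fact that a real scalar may be pulled across each coordinate. Writing $A = \norm{a}$ and $N = \sqrt{\norm{\alpha}^2 + \norm{\beta}^2}$ (both nonzero, since $\alpha\times\beta \neq 0$ forces $\alpha,\beta$ linearly independent, and we take $a \neq 0$), set $a' = a/A \in \C_{n-1}^\perp$ and $\alpha'' = \alpha/N$, $\beta'' = \beta/N$ in $\C_{n-1}$. Because real scalars factor out of products, $(\alpha a, \beta a) = AN\,(\alpha'' a', \beta'' a')$, so the two elements have the same eigenvalues and eigenspaces. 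Now $a'$ is a unit vector in $\C_{n-1}^\perp$, $\norm{\alpha''}^2 + \norm{\beta''}^2 = 1$, and $\alpha'' \times \beta'' = N^{-2}(\alpha \times \beta) \neq 0$ since the cross-product is $\R$-bilinear (multiplication and conjugation being $\R$-linear); moreover $\norm{\alpha'' \times \beta''} = \norm{\alpha\times\beta}/(\norm{\alpha}^2 + \norm{\beta}^2)$. Thus $(\alpha'' a', \beta'' a')$ satisfies all the hypotheses of Proposition~\ref{prop:eig-alpha}, with $\gamma = \pm i_{n-1}$.

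Next I would invoke Proposition~\ref{prop:eig-alpha} together with the remark immediately following it, that $A_n$ is the direct sum of the subspaces listed there. Each such subspace lies in a single eigenspace of $(\alpha'' a', \beta'' a')$ whose eigenvalue is either $1$ (part~(a)) or $(1 \pm 2\norm{\alpha'' \times \beta''})\lambda$ for some eigenvalue $\lambda$ of $a'$ (parts~(b)--(e), noting that (b) and (c) are the $\lambda = 1$ instances of (d) and (e), and that $1$ is always an eigenvalue of $a'$ by Proposition~\ref{prop:eig-H_a}). Since $M_{(\alpha'' a', \beta'' a')}$ is diagonalizable by Proposition~\ref{prop:non-neg} and $A_n$ is the internal direct sum of these pieces, a dimension count forces each eigenspace to be the sum of those pieces assigned to its eigenvalue; in particular every eigenvalue of $(\alpha'' a', \beta'' a')$ is realized on one of the listed subspaces, hence lies in $\{1\} \cup \{(1 \pm 2\norm{\alpha'' \times \beta''})\lambda : \lambda \text{ an eigenvalue of } a'\}$.

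Finally I would translate back: $a' = a/A$ has the same eigenvalues as $a$, and substituting $\norm{\alpha'' \times \beta''} = \norm{\alpha\times\beta}/(\norm{\alpha}^2+\norm{\beta}^2)$ gives exactly the asserted form. The only point needing care is the bookkeeping in the normalization step --- checking that real scalars pass through the coordinatewise expressions and that the cross-product rescales by $N^{-2}$ --- but this is routine; all the substantive content is already contained in Proposition~\ref{prop:eig-alpha} and the dimension-count remark that follows it.
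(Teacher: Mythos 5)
Your proof is correct and follows essentially the same route as the paper: normalize $(\alpha a, \beta a)$ to a unit vector by dividing out real scalars, apply Proposition~\ref{prop:eig-alpha}, and rescale the cross-product to obtain the stated formula. The only difference is that you make explicit the step the paper leaves implicit, namely invoking the direct-sum remark after Proposition~\ref{prop:eig-alpha} together with diagonalizability to conclude that the listed eigenvalues are exhaustive.
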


\begin{proof}
Let $N = \norm{\alpha}^2 + \norm{\beta}^2$.  Note that
$\norm{(\alpha a, \beta a)}^2$ equals $N\norm{a}^2$ by Lemma \ref{lem:C-norm}.

Now consider the unit vector 
$\left( \frac{\alpha}{\sqrt{N}} \frac{a}{\norm{a}}, 
   \frac{\beta}{\sqrt{N}} \frac{a}{\norm{a}} \right)$.
This vector is an $\R$-multiple of $(\alpha a, \beta a)$, 
so we just need to compute the eigenvalues of this unit vector.

Apply Proposition \ref{prop:eig-alpha} and conclude that the 
eigenvalues of 
$\left( \frac{\alpha}{\sqrt{N}} \frac{a}{\norm{a}}, 
   \frac{\beta}{\sqrt{N}} \frac{a}{\norm{a}} \right)$
are either $1$ or of the form 
$\left(1 \pm 2 \left| \frac{\alpha}{\sqrt{N}} \times 
  \frac{\beta}{\sqrt{N}} \right| \right)\lambda$,
where $\lambda$ is an eigenvalue of $a$.  This expression equals
$\left(1 \pm \frac{2 \norm{\alpha \times \beta}}{N} \right)\lambda$, 
as desired.
\end{proof}

In practice, the multiplicities of the eigenvalues in
Corollary \ref{cor:gamma-not-0} can be computed by inspection of
Proposition \ref{prop:eig-alpha}.  However, precise results are difficult
to state because of various special cases.  For example,
$1 \pm \frac{2\norm{\alpha \times \beta}}{\norm{\alpha}^2 + \norm{\beta}^2}$ 
are eigenvalues of $(\alpha a, \beta a)$ only
if $\Eig_1(a)$ strictly contains $\llangle a, i_{n-1} \rrangle$.
Also, it is possible that 
\[
\left(1-\frac{2\norm{\alpha \times\beta}}
             {\norm{\alpha}^2 +\norm{\beta}^2}\right)\lambda
= \left(1+\frac{2\norm{\alpha \times\beta}}
             {\norm{\alpha}^2 +\norm{\beta}^2}\right)\mu
\]
for distinct eigenvalues $\lambda$ and $\mu$ of $a$.

Because of part (a) of Proposition \ref{prop:eig-alpha},
the dimension of $\Eig_1(\alpha a, \beta a)$ is always at least 8.

\begin{cor}
\label{cor:gamma-max}
Let $a$ belong to $\C_{n-1}^\perp$, and let $\alpha$ and $\beta$
belong to $\C_{n-1}$ such that $\norm{a}$ and 
$\norm{\alpha}^2 + \norm{\beta}^2$
both equal $1$ (so that $(\alpha a, \beta a)$ is a unit vector).
Suppose that $\norm{\alpha \times \beta} = \frac{1}{2}$ 
(equivalently, by Lemma \ref{lem:cross2}, 
$\alpha$ and $\beta$ are orthogonal and have the same norm).
Every eigenvalue of $(\alpha a, \beta a)$ equals 
$0$ or $1$, or is of the form 
$2\lambda$ where $\lambda$ is an eigenvalue of $a$.
Moreover,
\begin{enumerate}[(a)]
\item
the multiplicity of $0$ is equal to $2^{n-1} - 4 + \dim \Eig_0(a)$;
\item
the multiplicity of $1$ is equal to $8 + \dim \Eig_{\frac{1}{2}}(a)$; 
\item
the multiplicity of $2$ is equal to $\dim \Eig_1(a) - 4$;
\item
the multiplicity of any other $\lambda$ is equal to 
$\dim \Eig_{\frac{\lambda}{2}}(a)$.
\end{enumerate}
\end{cor}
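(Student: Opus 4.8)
The plan is to specialize Proposition \ref{prop:eig-alpha} to the case $\norm{\alpha \times \beta} = \frac{1}{2}$ and then to verify that the subspaces listed in that proposition actually exhaust $A_n$, so that their dimensions give the full multiplicity count. Since $\norm{\alpha}^2 + \norm{\beta}^2 = 1$, the eigenvalues produced in parts (b)--(e) become $1 + 2\cdot\frac{1}{2} = 2$, $1 - 2\cdot\frac{1}{2} = 0$, $(1+1)\lambda = 2\lambda$, and $(1-1)\lambda = 0$ respectively, while part (a) contributes the eigenvalue $1$ on an $8$-dimensional (real) subspace. Corollary \ref{cor:gamma-not-0} already guarantees that every eigenvalue of $(\alpha a, \beta a)$ is $0$, $1$, or of the form $2\lambda$ for $\lambda$ an eigenvalue of $a$, so the only remaining work is bookkeeping of dimensions.

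First I would set up the ambient dimension count: $A_n$ has real dimension $2^n$, and by Lemma \ref{lem:eig-i_n} it is a $\C_n$-vector space of dimension $2^{n-1}$, with $A_{n-1}$ correspondingly a $\C_{n-1}$-space of real dimension $2^{n-1}$. Decompose $A_{n-1}$ orthogonally according to the eigenspaces of $a$: the $1$-eigenspace contains $\llangle a, i_{n-1}\rrangle$ (a $4$-dimensional real subspace, by Proposition \ref{prop:eig-H_a} and \cite[Lem.~5.6]{DDD}), and we further split $\Eig_1(a) = \llangle a, i_{n-1}\rrangle \oplus (\Eig_1(a)\cap\llangle a,i_{n-1}\rrangle^\perp)$. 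Then I would assemble the list of subspaces of $A_n$ from Proposition \ref{prop:eig-alpha}: the $8$-dimensional $\llangle a, i_{n-1}, i_n\rrangle = \{(x,y) : x,y\in\llangle a,i_{n-1}\rrangle\}$ sitting in $\Eig_1$; the graphs $\{(x,\mp\gamma x)\}$ over $\Eig_1(a)\cap\llangle a,i_{n-1}\rrangle^\perp$ sitting in $\Eig_2$ and $\Eig_0$ respectively; and the graphs $\{(x,\mp\gamma x)\}$ over $\Eig_\lambda(a)$ for each eigenvalue $\lambda\neq 1$ of $a$, sitting in $\Eig_{2\lambda}$ and $\Eig_0$. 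The crucial observation (already noted in the remark after Proposition \ref{prop:eig-alpha}) is that these subspaces are pairwise orthogonal and their real dimensions sum to $2^n$, hence they give a complete eigenbasis; this is where one must be slightly careful, since $\gamma = \pm i_{n-1}$ and one should check that $(x,\gamma x)$ and $(x,-\gamma x)$ together span all of $\{(x,y) : y\in\C_{n-1}x\}$ when $x$ ranges over a fixed eigenspace.

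With completeness in hand, the multiplicities follow by addition. For eigenvalue $0$: contributions come from $\{(x,\gamma x) : x\in\Eig_1(a)\cap\llangle a,i_{n-1}\rrangle^\perp\}$, from $\{(x,\gamma x) : x\in\Eig_\lambda(a)\}$ for every eigenvalue $\lambda\neq 1$ including $\lambda = 0$, and also from the part of $\Eig_0(a)\times\Eig_0(a)$ not yet accounted for. Summing the real dimensions: $(\dim\Eig_1(a) - 4) + \sum_{\lambda\neq 1}\dim\Eig_\lambda(a) + \dim\Eig_0(a) = (2^{n-1} - 4) + \dim\Eig_0(a)$, using Proposition \ref{prop:eig-sum}-style total $\sum_\lambda \dim\Eig_\lambda(a) = 2^{n-1}$ over $\C_{n-1}$. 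That gives (a). For eigenvalue $1$: the $8$ dimensions from $\llangle a,i_{n-1},i_n\rrangle$, plus the contribution of $\Eig_{1/2}(a)$ via parts (d) and (e) when $\frac{\lambda}{2} = \frac{1}{2}$, namely $2\dim_{\C}\Eig_{1/2}(a) = \dim_{\R}\Eig_{1/2}(a)$; this yields $8 + \dim\Eig_{1/2}(a)$, which is (b). For eigenvalue $2$: only part (b)'s graph over $\Eig_1(a)\cap\llangle a,i_{n-1}\rrangle^\perp$ contributes, giving $\dim\Eig_1(a) - 4$, which is (c). For any other $\lambda$: only parts (d) and (e) with $\frac{\lambda}{2}$ an eigenvalue of $a$ contribute, giving $\dim\Eig_{\lambda/2}(a)$, which is (d).

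The main obstacle is handling the potential coincidences among these eigenvalues cleanly — for instance if $2\lambda = 1$ for some eigenvalue $\lambda$ of $a$ (this is exactly why $\Eig_{1/2}(a)$ appears in clause (b)), or if $2\lambda = 0$ (forcing $\lambda = 0$, which is why $\Eig_0(a)$ reappears in clause (a)), or if two distinct eigenvalues $\lambda,\mu$ of $a$ satisfy $2\lambda = 2\mu$ (impossible) versus matching across the $\pm$ — and making sure no dimension is double-counted or omitted. I would organize this by fixing a target eigenvalue $\mu$ of $(\alpha a, \beta a)$ and listing every clause of Proposition \ref{prop:eig-alpha} that can land in $\Eig_\mu$, treating $\mu\in\{0,1,2\}$ as the exceptional cases and $\mu\notin\{0,1,2\}$ as generic; the orthogonality from Proposition \ref{prop:non-neg} ensures the listed subspaces with distinct $(a$-$)$eigenvalue labels are independent, so summing dimensions is legitimate. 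The total-dimension check against $2^n$ then confirms nothing was missed.
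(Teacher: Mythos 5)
Your proof is correct and takes the same route as the paper's: specialize Proposition~\ref{prop:eig-alpha} to $\norm{\alpha\times\beta}=\tfrac12$, observe (as the remark following that proposition notes) that the resulting subspaces form a complete orthogonal decomposition of $A_n$, and tally real dimensions using $\sum_\lambda \dim\Eig_\lambda(a)=2^{n-1}$. The paper's own proof is two sentences and leaves all the bookkeeping to the reader, so your careful accounting is exactly what is being elided; the only slip is a wording one — in your eigenvalue-$1$ count, the $\Eig_{1/2}(a)$ contribution comes solely from part~(d) of Proposition~\ref{prop:eig-alpha}, since part~(e) sends that eigenspace into $\Eig_0(\alpha a,\beta a)$, but since you count it only once your final tally $8+\dim\Eig_{1/2}(a)$ is unaffected.
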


Beware that if $\Eig_1(a)$ is 4-dimensional (i.e., if $\Eig_1 (a)$
equals $\llangle a, i_{n-1}\rrangle$, 
then $2$ is not an eigenvalue of $(\alpha a, \beta a)$.

\begin{proof}
Note that $1 - 2\norm{\alpha \times \beta} = 0$ 
and $1 + 2\norm{\alpha \times \beta} = 2$,
so parts (c) and (e) of Proposition
\ref{prop:eig-alpha} describe $\Eig_0(\alpha a, \beta a)$.
The analysis of the other eigenvalues follows from the other parts
of Proposition \ref{prop:eig-alpha}.
\end{proof}

We end this section by considering the case when $\alpha \times \beta = 0$;
this is excluded in Proposition \ref{prop:eig-alpha},
Corollary \ref{cor:gamma-not-0}, and Corollary \ref{cor:gamma-max}.

\begin{prop}
\label{prop:gamma=0}
Let $a$ belong to $\C_{n-1}^\perp$, and let $\alpha$ and $\beta$
belong to $\C_{n-1}$.
Suppose that $\alpha \times \beta = 0$
(equivalently, $\alpha$ and $\beta$ are linearly dependent).
Then $\Eig_\lambda(a) \times \Eig_\lambda(a)$ is contained in
$\Eig_\lambda(\alpha a, \beta a)$.
In particular, the eigenvalues of $(\alpha a, \beta a)$ are the
same as the eigenvalues of $a$, but the multiplicities are doubled.
\end{prop}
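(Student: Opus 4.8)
The strategy is to split $\Eig_\lambda(a)$ into its part inside the quaternion subalgebra $\llangle a,i_{n-1}\rrangle$ and its part orthogonal to that subalgebra, handle the two pieces by different tools (the octonion subalgebra $\llangle a,i_{n-1},i_n\rrangle$ in the first case, Lemma \ref{lem:mult-alpha} in the second), and then add. First I would dispose of the trivial case $a=0$ and record preliminaries. Since $a\in\C_{n-1}^\perp$ is imaginary, Exercise \ref{ex:multiply}(1) gives $(\alpha a)^*=-\alpha a$ and $(\beta a)^*=-\beta a$, so $(\alpha a,\beta a)$ is imaginary; Lemma \ref{lem:C-norm} gives $\norm{(\alpha a,\beta a)}^2=N\norm{a}^2$ where $N=\norm{\alpha}^2+\norm{\beta}^2$; and for any imaginary $b$, the condition $z\in\Eig_\mu(b)$ is equivalent to $b\cdot bz=-\mu\norm{b}^2z$.

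Because $\llangle a,i_{n-1}\rrangle\subseteq\Eig_1(a)$ by Proposition \ref{prop:eig-H_a} and eigenspaces for distinct eigenvalues are orthogonal by Proposition \ref{prop:non-neg}, for each eigenvalue $\lambda$ of $a$ one has $\Eig_\lambda(a)=\bigl(\Eig_\lambda(a)\cap\llangle a,i_{n-1}\rrangle\bigr)\oplus\bigl(\Eig_\lambda(a)\cap\llangle a,i_{n-1}\rrangle^\perp\bigr)$, the first summand being $0$ unless $\lambda=1$. So given $x,y\in\Eig_\lambda(a)$, I would write $x=x_0+x_1$, $y=y_0+y_1$ with $x_0,y_0\in\llangle a,i_{n-1}\rrangle$ and $x_1,y_1\in\Eig_\lambda(a)\cap\llangle a,i_{n-1}\rrangle^\perp$, so that $(x,y)=(x_0,y_0)+(x_1,y_1)$.

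For $(x_0,y_0)$: this element lies in $\llangle a,i_{n-1},i_n\rrangle$ (whose elements are exactly the pairs with both coordinates in $\llangle a,i_{n-1}\rrangle$), an alternative subalgebra of $A_n$ containing $(\alpha a,\beta a)$, so exactly as in the proof of Proposition \ref{prop:eig-alpha}(a) we get $(x_0,y_0)\in\Eig_1(\alpha a,\beta a)$; since $x_0,y_0$ vanish unless $\lambda=1$, in fact $(x_0,y_0)\in\Eig_\lambda(\alpha a,\beta a)$. For $(x_1,y_1)$: its coordinates are orthogonal to $\llangle a,i_{n-1}\rrangle$, so Lemma \ref{lem:mult-alpha} applies, and since $\alpha\times\beta=0$ it gives $(\alpha a,\beta a)\cdot(\alpha a,\beta a)(x_1,y_1)=N\,(a\cdot ax_1,\ a\cdot ay_1)$; using $a\cdot ax_1=-\norm{a}^2M_ax_1=-\lambda\norm{a}^2x_1$ (as $a^*=-a$) and likewise for $y_1$, the right side equals $-\lambda N\norm{a}^2(x_1,y_1)=-\lambda\norm{(\alpha a,\beta a)}^2(x_1,y_1)$, whence $(x_1,y_1)\in\Eig_\lambda(\alpha a,\beta a)$. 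Adding the two pieces gives $(x,y)\in\Eig_\lambda(\alpha a,\beta a)$, which is the asserted containment.

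For the final assertion, over the eigenvalues $\lambda$ of $a$ the subspaces $\Eig_\lambda(a)\times\Eig_\lambda(a)$ are independent with total real dimension $2\sum_\lambda\dim\Eig_\lambda(a)=2\cdot 2^{n-1}=2^n=\dim A_n$ (Proposition \ref{prop:non-neg} applied in $A_{n-1}$); since they sit inside the independent eigenspaces $\Eig_\lambda(\alpha a,\beta a)$, all the containments must be equalities and $(\alpha a,\beta a)$ can have no other eigenvalues, so its spectrum is that of $a$ with every multiplicity doubled. The one step needing care is the bookkeeping around $\llangle a,i_{n-1}\rrangle$: Lemma \ref{lem:mult-alpha} only covers vectors orthogonal to this quaternion subalgebra, so the $1$-eigenspace contribution has to be peeled off and routed through the octonion subalgebra $\llangle a,i_{n-1},i_n\rrangle$, and one has to check this introduces no spurious eigenvalue — which is automatic since every element of $\llangle a,i_{n-1}\rrangle$ is a $1$-eigenvector of $a$.
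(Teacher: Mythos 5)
Your proof is correct, and it uses the same key tool as the paper: Lemma~\ref{lem:mult-alpha} with $\alpha\times\beta=0$. But the paper's proof is literally the single sentence ``This follows immediately from the formula in Lemma~\ref{lem:mult-alpha},'' which quietly glosses over the hypothesis in that lemma that both coordinates $x,y$ be orthogonal to $\llangle a,i_{n-1}\rrangle$. For $\lambda\ne 1$ the hypothesis is automatic, since $\Eig_\lambda(a)\perp\Eig_1(a)\supseteq\llangle a,i_{n-1}\rrangle$ by Propositions~\ref{prop:non-neg} and \ref{prop:eig-H_a}; but for $\lambda=1$ the eigenspace strictly contains $\llangle a,i_{n-1}\rrangle$, and the lemma cannot be applied directly. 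You correctly identify this and patch it exactly as the earlier argument for Proposition~\ref{prop:eig-alpha}(a) does: the component of $(x,y)$ inside $\llangle a,i_{n-1},i_n\rrangle$ is routed through alternativity of that octonion subalgebra, and the orthogonal component is routed through Lemma~\ref{lem:mult-alpha}. (An alternative shortcut, if you want to hew closer to the paper's brevity: handle $\lambda\ne 1$ directly via the lemma, and then get the $\lambda=1$ containment for free from the dimension count, since the spaces $\Eig_\lambda(a)\times\Eig_\lambda(a)$ already fill out $A_n$.) Your closing dimension argument for equality of the spectra with doubled multiplicities is exactly what is needed and is standard. So: same approach as the paper, but your version is more careful and actually fills a small gap the paper leaves implicit.
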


\begin{proof}
This follows immediately from the formula in Lemma \ref{lem:mult-alpha}.
\end{proof}


\section{Eigentheory of $A_4$}
\label{sctn:A4-eig}

In this section we will completely describe the eigentheory of 
every element of $\C_4^\perp$.  
The eigentheory of an arbitrary element of $A_4$ can then
be described with
Proposition \ref{prop:reduce-C-perp}.

\begin{prop}
\label{prop:A4-lin-dep}
Let $a$ be an imaginary element of $A_3$.
Then $(a\cos \theta, a\sin \theta)$
is alternative in $A_4$.
\end{prop}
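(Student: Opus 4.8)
The plan is to check the alternativity identity $w\cdot wx = w^2x$ directly for $w=(a\cos\theta,\,a\sin\theta)$, exploiting that the ``inner'' algebra $A_3$ is alternative. First I would normalize: if $a=0$ the statement is trivial, and rescaling $a$ by a positive real affects neither side, so I may assume $\norm{a}=1$, whence $\norm{w}=1$. Since $a$ is imaginary, $w^*=(a^*\cos\theta,-a\sin\theta)=-w$, so $w$ is imaginary as well, and from $ww^*=\norm{w}^2$ together with $w^*=-w$ we get $w^2=-\norm{w}^2=-1$. Hence the proposition reduces to the single statement $w(wx)=-x$ for all $x=(y,z)\in A_4=A_3\times A_3$, i.e. $L_w^2=-I$.

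Second, I would expand using $(a,b)(c,d)=(ac-d^*b,\,da+bc^*)$ and the fact that $\cos\theta,\sin\theta$ are real scalars. Writing $c=\cos\theta$, $s=\sin\theta$, one gets $wx=(c\,ay-s\,z^*a,\ c\,za+s\,ay^*)$, and applying the formula a second time expresses each coordinate of $w(wx)$ as a sum of triple products in $A_3$ in $a$, $y$, $z$ and their conjugates. Since conjugation on $A_3$ is an anti-automorphism with $a^*=-a$, every conjugated subexpression rewrites with no conjugate on the outermost factor, and after collecting terms each coordinate becomes a combination of ``diagonal'' terms $a(av)$ and $(va)a$ (coefficients $c^2$ and $s^2$) and a pair of ``cross'' terms $a(ua)$ and $(au)a$ (coefficients $cs$ and $-cs$), with $v\in\{y,z\}$ and $u\in\{y^*,z^*\}$; concretely the first coordinate is $c^2\,a(ay)+s^2\,(ya)a-cs\big(a(z^*a)-(az^*)a\big)$ and the second is of the same shape with $z$ in place of $y$.

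Third, I would invoke that $A_3$ is alternative, hence left alternative, right alternative, and flexible. Left/right alternativity give $a(av)=a^2v=-v$ and $(va)a=va^2=-v$ (using $a^2=-1$), so the diagonal terms collapse to $-(c^2+s^2)v=-v$; flexibility, $a(ua)=(au)a$ for all $u$, annihilates the cross terms. Thus the first coordinate is $-y$, the second $-z$, and $w(wx)=(-y,-z)=-x=w^2x$, proving alternativity. I expect the cross-term cancellation to be the only real point: it is exactly there that flexibility of $A_3$ is used, and it is why $w$ stays alternative even though $A_4$ is not. (A slicker but less self-contained variant: $\Aut(A_3)=G_2$ acts transitively on imaginary unit vectors, and any automorphism of $A_3$ extends coordinatewise to one of $A_4$ — an immediate check from the multiplication formula — so one may assume $a\in\C_3^\perp$; then the result follows from Lemma~\ref{lem:mult-alpha} or Proposition~\ref{prop:gamma=0} with $\alpha=\cos\theta$, $\beta=\sin\theta$, $\alpha\times\beta=0$, together with the fact that $1$ is the only eigenvalue of $a$ in the alternative algebra $A_3$.)
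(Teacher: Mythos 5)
Your argument is correct and takes a genuinely different route from the paper's. The paper first reduces to the case $a \in \C_3^\perp$ using the transitivity of $\Aut(A_3) = G_2$ on imaginary unit vectors (together with the observation that an automorphism of $A_3$ extends coordinatewise to one of $A_4$), and then notes that the statement is the special case $\alpha = \cos\theta$, $\beta = \sin\theta$ of Proposition~\ref{prop:gamma=0}: since $\alpha\times\beta = 0$, the eigenvalues of $(\alpha a, \beta a)$ coincide with those of $a$, and $A_3$ being alternative means $1$ is the only eigenvalue of $a$ --- this is exactly the ``slicker variant'' you sketch in your final parenthesis. Your main proof instead normalizes so $w = (a\cos\theta, a\sin\theta)$ is an imaginary unit vector with $w^2=-1$, and verifies $L_w^2 = -I$ by direct expansion of the Cayley--Dickson product. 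The heart of it is that after expansion each coordinate splits into diagonal terms of the form $a(av)$ and $(va)a$, collapsed by left- and right-alternativity of $A_3$, plus a cross term of the form $a(ua)-(au)a$, annihilated by flexibility of $A_3$. This is more elementary and self-contained --- it needs neither the $G_2$-transitivity reduction nor the eigentheory machinery behind Proposition~\ref{prop:gamma=0} --- at the cost of a hands-on computation. One trivial slip: the second coordinate actually expands as $c^2\,(za)a + s^2\, a(az) - cs\bigl(a(y^*a)-(ay^*)a\bigr)$, with the $c^2$/$s^2$ diagonal factors interchanged relative to the first coordinate rather than literally ``the same shape with $z$ in place of $y$''; this is immaterial, since left- and right-alternativity both collapse those terms to $-z$.
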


In other words, $1$ is the only eigenvalue of $(a,b)$ if
$a$ and $b$ are imaginary and linearly dependent elements of $A_3$.

\begin{proof}
As explained in Section \ref{subsctn:oct}, we may assume that
$a$ is orthogonal to $\C_3$.  Using that $A_3$ is alternative,
the result is a special case of Proposition \ref{prop:gamma=0}.
\end{proof}

Having dispensed with the linearly dependent case, we will now focus
our attention on elements $(a,b)$ of $A_4$ such that $a$ and $b$
are imaginary and linearly independent.

\begin{thm}
\label{thm:A4}
Let $a$ and $b$ be imaginary linearly independent elements of $A_3$
such that $\norm{a}^2 + \norm{b}^2 = 1$ 
(so that $(a,b)$ is a unit vector in $A_4$).
Set $c = a \times b/\norm{a \times b}$.  Then:
\begin{enumerate}[(a)]
\item
$\llangle a, b \rrangle \times \llangle a, b \rrangle$
is contained in the $1$-eigenspace of $(a,b)$;
\item
$\{ (x, - c x)\ |\ x \in \llangle a, b\rrangle^\perp \}$ is contained
in the $(1 + 2\norm{a \times b})$-eigenspace of $(a,b)$;
\item
$\{ (x, c x)\ |\ x \in \llangle a, b\rrangle^\perp \}$ is contained
in the $(1 - 2\norm{a \times b})$-eigenspace of $(a,b)$.
\end{enumerate}
\end{thm}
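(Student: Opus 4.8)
The plan is to obtain Theorem~\ref{thm:A4} as essentially a special case of Proposition~\ref{prop:eig-alpha}, after using the $G_2$-action on $A_3$ to put the pair $(a,b)$ into the normal form required by that proposition. First I would reduce to the case $\norm{a\times b} \neq 0$: this is exactly the assumption that $a$ and $b$ are $\R$-linearly independent (by Lemma~\ref{lem:cross2}, since $a$ and $b$ are imaginary). Next, rescale so that $\norm a^2 + \norm b^2 = 1$, which is harmless for eigentheory and is the hypothesis of the theorem.

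The key reduction is to exploit that $\Aut(A_3) = G_2$ acts transitively on ordered pairs of orthogonal imaginary unit vectors, and in fact on ordered triples $(x,y,z)$ of pairwise orthogonal imaginary unit vectors with $z \perp xy$ (stated in Section~\ref{subsctn:oct}). Writing $a = \norm a\, \hat a$ and decomposing $b$ into its component along $\hat a$ and a perpendicular imaginary component, I would apply an automorphism of $A_3$ carrying $\hat a$ to some fixed imaginary unit vector $a'$ orthogonal to $\C_3$ — say $a' = i_3^{\perp}$-type element — while simultaneously arranging that the $\C_3$-component of the picture lines up: concretely, one wants $a \in \C_3^\perp$ and the extra imaginary direction needed to express $b$ to be $i_3 = i_{n-1}$ (here $n=4$). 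Since automorphisms of $A_3$ fix $1$, preserve norms, the inner products, and the cross-product (being algebra maps), and extend to automorphisms of $A_4$ acting diagonally on $(x,y)$, they carry eigenspaces of $(a,b)$ to eigenspaces of the transformed element and carry $\llangle a,b\rrangle$ to the transformed subalgebra. So it suffices to prove the theorem for the normalized pair.

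Once $(a,b)$ is in normal form, I would write $a$ as a scalar multiple of a unit vector $a_0 \in \C_3^\perp$ and $b$ as $\beta a_0$ for a suitable $\beta \in \C_3$ — more precisely, after the automorphism, both $a$ and $b$ lie in $\llangle a_0, i_3\rrangle \cong \HH$, and since $\llangle a_0,i_3\rrangle$ is the $\R$-span of $1, a_0, i_3, i_3 a_0$ with $a,b$ imaginary, one can write $a = \alpha a_0$ and $b = \beta a_0$ with $\alpha,\beta \in \C_3$ using Exercise~\ref{ex:multiply} to absorb $i_3$-factors. Then $(a,b) = (\alpha a_0, \beta a_0)$ is precisely the shape to which Proposition~\ref{prop:eig-alpha} (and Corollary~\ref{cor:gamma-not-0}) applies, with $\alpha\times\beta \neq 0$. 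Parts (a), (b), (c) of the theorem then match parts (a), (b), (c) of Proposition~\ref{prop:eig-alpha} once one checks the bookkeeping: that $\llangle a_0, i_3, i_4\rrangle$ corresponds to $\llangle a,b\rrangle \times \llangle a,b\rrangle$ inside $A_4$, that $\Eig_1(a_0) = A_3 \supseteq \llangle a_0, i_3\rrangle$ is everything (since $A_3$ is alternative, by Proposition~\ref{prop:eig-H_a} or Lemma~\ref{lem:basis-alt}), so $\Eig_1(a_0)\cap\llangle a_0,i_3\rrangle^\perp = \llangle a,b\rrangle^\perp$, and that $\gamma = (\alpha\times\beta)/\norm{\alpha\times\beta}$ pulls back to $c = (a\times b)/\norm{a\times b}$ via Lemma~\ref{lem:cross3}, which gives $\alpha a_0 \times \beta a_0 = \norm{a_0}^2(\alpha\times\beta)$.

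The main obstacle I anticipate is the normal-form step: justifying cleanly that a pair of imaginary linearly independent elements of $A_3$ can be moved by an automorphism into a single copy of $\HH$ of the form $\llangle a_0, i_3\rrangle$ with $a_0 \in \C_3^\perp$, and tracking that the cross-product direction $c$ lands on $\pm i_3$ so that the identifications in Proposition~\ref{prop:eig-alpha} go through verbatim. One must be a little careful that $\llangle a,b\rrangle$ for linearly independent imaginary $a,b$ is genuinely a quaternion subalgebra (it contains $1, a, b, $ and $ab$, whose imaginary part $a\times b$ is a nonzero vector orthogonal to $a,b$ by Lemmas~\ref{lem:cross-product-ortho} and~\ref{lem:cross2}), so it is $4$-dimensional and isomorphic to $\HH$; then the containing octonion $\llangle a,b,i_4\rrangle$ is the right $8$-dimensional $1$-eigenspace piece. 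Alternatively, if one prefers to avoid invoking $G_2$-transitivity, the whole computation can be done directly by the same recipe as Lemma~\ref{lem:mult-alpha} and Proposition~\ref{prop:eig-alpha}, expanding $(a,b)\cdot(a,b)(x,y)$ with the Cayley-Dickson multiplication formula and the conjugate-linearity of $L_a$ from Lemma~\ref{lem:C-conj-linear}; but the automorphism route is shorter and reuses Proposition~\ref{prop:eig-alpha} intact.
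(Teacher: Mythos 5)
Your proposal follows the same route as the paper: reduce by the $G_2$-action and then invoke Proposition~\ref{prop:eig-alpha}. The paper organizes the normalization a bit more economically: it applies an automorphism carrying $a\times b$ to a scalar multiple of $i_3$, after which Lemma~\ref{lem:cross-product-ortho} immediately gives $a,b\in\C_3^\perp$, and hence $(a,b)=(\alpha a_0,\beta a_0)$ in the shape required by Proposition~\ref{prop:eig-alpha}. One caution about your write-up: the phrase ``one can write $a=\alpha a_0$ and $b=\beta a_0$ \ldots\ using Exercise~\ref{ex:multiply} to absorb $i_3$-factors'' is not quite right as a mechanism. An imaginary element of $\llangle a_0,i_3\rrangle$ generally has a nonzero $i_3$-component, which cannot be absorbed into a $\C_3$-coefficient of $a_0$; what actually forces $b=\beta a_0$ is that $b$ is orthogonal to $i_3$, and that orthogonality is exactly what you get from putting the cross-product direction $c$ onto $\pm i_3$, since $a\times b$ is orthogonal to both $a$ and $b$. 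You do flag this as the delicate step, and the direct-computation fallback you mention would also succeed; but the cleanest version is the paper's, which normalizes $c$ first and lets everything else fall out.
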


By counting dimensions, it is straightforward to check that $A_4$
is the direct sum of the subspaces listed in the theorem.  Thus,
the theorem completely describes the eigentheory of $(a,b)$.

Note that the definition of $c$ makes sense because $a \times b$
is always non-zero when $a$ and $b$ are imaginary and linearly
independent.

\begin{proof}
The element $a \times b$ is a non-zero imaginary element of $A_3$.
As explained in Section \ref{subsctn:oct}, we may assume that
$a \times b$ is a non-zero scalar multiple of $i_3$.  Then 
Lemma \ref{lem:cross-product-ortho} implies that $a$ and $b$
belong to $\C_3^\perp$.
Now Proposition \ref{prop:eig-alpha} applies.  
\end{proof}

Another approach to Theorem \ref{thm:A4} is to compute directly
using octonionic arithmetic that
for $x$ in $\llangle a, b \rrangle^\perp$, \[
(a,b) \cdot (a,b) (x, \pm c x ) =
-(1 \mp 2\norm{a \times b})(x, \pm cx).
\]

\begin{cor}
\label{cor:A4-spec}
Let $a$ and $b$ be imaginary linearly independent elements of $A_3$,
and let $\theta$ be the angle between $a$ and $b$.
The eigenvalues of $(a,b)$ are
\[
1,\ 1 + \frac{2 \norm{a} \norm{b} \sin \theta}{\norm{a}^2 + \norm{b}^2},\
1 - \frac{2 \norm{a} \norm{b} \sin \theta}{\norm{a}^2 + \norm{b}^2}.
\]
The multiplicities are $8$, $4$, and $4$ respectively.
\end{cor}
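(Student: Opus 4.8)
The plan is to read everything off Theorem~\ref{thm:A4} after two routine reductions. First, since $M_{r(a,b)} = M_{(a,b)}$ for every $r > 0$, the eigenvalues and eigenspaces of $(a,b)$ are unchanged if I replace $(a,b)$ by $(a,b)/\norm{(a,b)}$; the rescaled pair is again imaginary and linearly independent, the angle between its components is still $\theta$, and now $\norm{a}^2 + \norm{b}^2 = 1$, so Theorem~\ref{thm:A4} applies directly. Second, because $A_3$ is alternative, $\Eig_1(a) = A_3 \ni b$, so the hypothesis of Lemma~\ref{lem:cross} holds and $\norm{a \times b} = \norm{a}\norm{b}\sin\theta$. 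Combining these, the three eigenvalues $1$, $1 + 2\norm{a\times b}$, $1 - 2\norm{a\times b}$ produced by Theorem~\ref{thm:A4} become $1$ and $1 \pm \tfrac{2\norm{a}\norm{b}\sin\theta}{\norm{a}^2+\norm{b}^2}$; this last expression is scale-invariant, so it is the correct answer for the original, un-normalized $(a,b)$ as well. These three values are genuinely distinct because $a$ and $b$, being imaginary and linearly independent, have $a \times b \neq 0$ (Lemma~\ref{lem:cross2}).

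For the multiplicities I would invoke the dimension count stated immediately after Theorem~\ref{thm:A4}, which says $A_4$ is the internal direct sum of the three subspaces (a), (b), (c) appearing there. Since those three subspaces lie in the three distinct eigenspaces and together span $A_4$, the containments in Theorem~\ref{thm:A4} are in fact equalities, so the multiplicity of $1$ is $2\dim_\R\llangle a,b\rrangle$ and the multiplicities of $1 \pm 2\norm{a\times b}$ are each $\dim_\R\llangle a,b\rrangle^\perp$, the orthogonal complement being taken inside $A_3$. It remains to check that $\dim_\R\llangle a,b\rrangle = 4$. Here I note that $1, a, b$ are linearly independent (as $a, b$ are imaginary and linearly independent), so $\dim_\R\llangle a,b\rrangle \geq 3$; that $\llangle a,b\rrangle \neq A_3$ because $A_3$ is not generated by two elements (Section~\ref{subsctn:subalg}), so $\dim_\R\llangle a,b\rrangle \leq 7$; and that every subalgebra of $A_3 \iso \OO$ has real dimension $1$, $2$, $4$, or $8$, its proper subalgebras being isomorphic to $\R$, $\C$, or $\HH$. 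Hence $\dim_\R\llangle a,b\rrangle = 4$ and $\dim_\R\llangle a,b\rrangle^\perp = 4$, giving multiplicities $8$, $4$, $4$. Alternatively, one could apply a $G_2$-automorphism of $A_3$ carrying $b$ to a scalar multiple of $i_3$ — such a map extends to the automorphism $\phi \times \phi$ of $A_4$ and hence preserves the eigentheory — and then $\llangle a,b\rrangle = \llangle a, i_3\rrangle \iso \HH$ by \cite[Lem.~5.6]{DDD}.

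The only step that is not a one-line substitution is the determination of $\dim_\R\llangle a,b\rrangle$, i.e., the assertion that two linearly independent imaginary octonions generate a quaternion subalgebra and never all of $\OO$; I expect that to be the main (if modest) obstacle, and everything else to follow mechanically from Theorem~\ref{thm:A4}, Lemma~\ref{lem:cross}, and Lemma~\ref{lem:cross2}.
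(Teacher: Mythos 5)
Your proposal follows the paper's own route exactly: normalize to a unit vector, invoke Theorem~\ref{thm:A4}, and use Lemma~\ref{lem:cross} (valid since $A_3$ is alternative) to convert $\norm{a\times b}$ into $\norm{a}\norm{b}\sin\theta$. The one thing you add beyond the paper's terse proof is an explicit justification that $\dim_\R\llangle a,b\rrangle = 4$, which the paper leaves implicit (it follows from the proof of Theorem~\ref{thm:A4}, where after a $G_2$-reduction one identifies $\llangle a,b\rrangle$ with a copy of $\llangle a',i_3\rrangle\iso\HH$). Both of your justifications for the dimension are fine; the $G_2$ argument is the more self-contained one relative to what the paper establishes, while the first relies on the standard classification of proper subalgebras of $\OO$, which the paper does not itself prove.
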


\begin{proof}
See the proof of Corollary \ref{cor:gamma-not-0} 
to reduce to the case in which $(a,b)$ is a unit vector.
Then apply Theorem \ref{thm:A4}.  One also needs
Lemma \ref{lem:cross} to compute the norm of the cross-product;
note that the hypothesis of this lemma is satisfied because $A_3$
is alternative.
\end{proof}

Properly interpreted, the corollary is also valid when
$a$ and $b$ are linearly dependent.  In this case, $\sin \theta = 0$,
and all three eigenvalues are equal to 1.  This agrees with
Proposition \ref{prop:A4-lin-dep}.

Recall 
that $(a,b)$ is a zero-divisor in $A_4$
if and only if $a$ and $b$ are orthogonal imaginary elements of $A_3$ that
have the same norm
\cite[Cor.~2.14]{M1} \cite[Prop.~12.1]{DDD}.

\begin{prop}
\label{prop:A4-zd-spec}
Let $a$ and $b$ be orthogonal, imaginary, non-zero elements of $A_3$ such that
$\norm{a}=\norm{b}$.
Then the eigenvalues of $(a,b)$ in $A_4$ are $0$, $1$, and $2$
with multiplicities $4$, $8$, and $4$ respectively.
Moreover,
\begin{enumerate}[(a)]
\item
$\Eig_0(a,b) = \{(x,-ab \cdot x/\norm{ab}):x\in \llangle a,b\rrangle^\perp \}$.
\item
$\Eig_1(a,b) = \llangle a,b\rrangle \times \llangle a,b\rrangle$.
\item
$\Eig_2(a,b) = \{(x, ab \cdot x/\norm{ab}):x\in \llangle a,b\rrangle^\perp \}$.
\end{enumerate}
\end{prop}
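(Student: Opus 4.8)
The plan is to obtain Proposition~\ref{prop:A4-zd-spec} as the special case of Theorem~\ref{thm:A4} in which $a$ and $b$ are orthogonal and have equal norm, after rescaling to a unit vector. First I would note that multiplying $(a,b)$ by a positive real scalar changes neither the eigenvalues nor the eigenspaces, so there is no loss in assuming $\norm{a}^2+\norm{b}^2=1$; then $(a,b)$ is a unit vector in $A_4$ and Theorem~\ref{thm:A4} applies. Since $a$ and $b$ are orthogonal imaginary elements of $A_3$ with $\norm{a}=\norm{b}$, we have $\norm{a}=\norm{b}=1/\sqrt2$ and, by Lemma~\ref{lem:cross} (whose hypothesis holds because $A_3$ is alternative, so $b\in\Eig_1(a)$), $\norm{a\times b}=\norm{a}\norm{b}\sin\theta=\frac12\cdot 1=\frac12$. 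Hence $1+2\norm{a\times b}=2$ and $1-2\norm{a\times b}=0$, and Theorem~\ref{thm:A4} immediately gives that $\llangle a,b\rrangle\times\llangle a,b\rrangle\subseteq\Eig_1(a,b)$, that $\{(x,-cx):x\in\llangle a,b\rrangle^\perp\}\subseteq\Eig_2(a,b)$, and that $\{(x,cx):x\in\llangle a,b\rrangle^\perp\}\subseteq\Eig_0(a,b)$, where $c=a\times b/\norm{a\times b}$.

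Next I would pin down the multiplicities by a dimension count. Since $a$ and $b$ are imaginary and linearly independent, $\llangle a,b\rrangle$ is the copy of $\HH$ described in Section~\ref{subsctn:subalg}, so $\dim_\R\llangle a,b\rrangle=4$ and $\dim_\R\llangle a,b\rrangle^\perp=4$. Therefore the three subspaces in Theorem~\ref{thm:A4} have real dimensions $4\cdot 4=16$ (wait---more carefully, $\llangle a,b\rrangle\times\llangle a,b\rrangle$ has dimension $8$), $4$, and $4$; these add up to $16=\dim_\R A_4$. Because distinct eigenspaces are orthogonal (Proposition~\ref{prop:non-neg}) and hence the sum of all eigenspaces is internal, the containments above must be equalities, the eigenvalues are exactly $0,1,2$, and the multiplicities are $4,8,4$. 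This simultaneously proves the displayed formulas (a)--(c) and the multiplicity claim, once we identify $c$ with $ab/\norm{ab}$: indeed, since $a$ and $b$ are orthogonal imaginary elements, $\Re(ab^*)=-\Re(ab)=-\langle a,b\rangle_\R=0$, so $ab^*$ is imaginary and $a\times b=\Im(ab^*)=ab^*=-ab$; thus $c=a\times b/\norm{a\times b}=-ab/\norm{ab}$, and the sign flips in the $\pm cx$ expressions match the statement.

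The only genuinely delicate point is the bookkeeping of the identification $c=-ab/\norm{ab}$ and making sure the signs in (a) and (c) come out as stated; everything else is formal. In particular I should double-check that $ab^*$ really is imaginary in this case: with $a,b$ imaginary we have $b^*=-b$, so $ab^*=-ab$, and $\Re(ab)=\langle a,b\rangle_\R$ by the definition of the real inner product on $A_3$---wait, $\langle a,b\rangle_\R=\Re(ab^*)$---so $\Re(ab^*)=\langle a,b\rangle_\R=0$ by orthogonality, confirming $ab^*$ is imaginary and $\norm{a\times b}=\norm{ab^*}=\norm{ab}$. Hence $c=ab^*/\norm{ab^*}=-ab/\norm{ab}$, and substituting into Theorem~\ref{thm:A4}(b),(c) yields exactly the descriptions of $\Eig_2$ and $\Eig_0$ in the proposition. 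I expect this sign-chasing to be the main (minor) obstacle; the structural content is entirely supplied by Theorem~\ref{thm:A4} and Lemma~\ref{lem:cross}.
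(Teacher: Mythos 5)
Your proof is correct and follows essentially the same route the paper takes: the paper's proof reads in full ``This is a special case of Theorem~\ref{thm:A4}. Note that $ab$ is already imaginary because $a$ and $b$ are orthogonal; therefore $a\times b=-ab$,'' with the dimension count delegated to the remark following Theorem~\ref{thm:A4}. You have simply made explicit the rescaling to a unit vector, the computation $\norm{a\times b}=\tfrac12$ via Lemma~\ref{lem:cross}, the dimension count, and the sign-chasing through $c=a\times b/\norm{a\times b}=-ab/\norm{ab}$ that turns the containments of Theorem~\ref{thm:A4} into the stated equalities --- all of which match what the paper leaves implicit.
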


\begin{proof}
This is a special case of Theorem \ref{thm:A4}.  Note that
$ab$ is already imaginary because $a$ and $b$ are orthogonal;
therefore $a \times b = -ab$.
\end{proof}

See \cite[Section 4]{MG} for a generic example of the computation in
Proposition \ref{prop:A4-zd-spec}.

\section{Further Results}

We now establish precisely which real numbers
occur as eigenvalues in Cayley-Dickson algebras.
Recall from \cite[Prop.~9.10]{DDD} that if $a$ belongs to $A_n$,
then the dimension of 
$\Eig_0(a)$ is at most $2^n - 4n + 4$, and 
this bound is sharp.  

\begin{defn}
A \mdfn{top-dimensional zero-divisor} of $A_n$ is a 
zero-divisor whose $0$-eigenspace has dimension $2^n - 4n + 4$.
\end{defn}

\begin{thm}
\label{thm:spec-top}
Let $a$ be a top-dimensional zero-divisor in $A_n$, where $n\geq 3$.
Then the eigenvalues of $a$ are $0$ or $2^k$, where $0 \leq k \leq n-3$.
Moreover,
\begin{enumerate}[(a)]
\item
the multiplicity of $0$ is $2^n - 4n + 4$;
\item
the multiplicity of $1$ is $8$;
\item
the multiplicity of all other eigenvalues is $4$.
\end{enumerate}
\end{thm}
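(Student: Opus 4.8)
The plan is to argue by induction on $n$, with the base case $n = 3$ being the alternative algebra $A_3$ (where the only top-dimensional zero-divisor situation is vacuous/trivial since $2^3 - 4\cdot 3 + 4 = 0$, so the only eigenvalue is $1$ with multiplicity $8$, matching the claim). For the inductive step, I would first need to understand the structure of a top-dimensional zero-divisor $a = (b,c)$ in $A_n$. The key structural input is the description of $\Eig_0(a)$ and how it relates to $\Eig_0$ of elements of $A_{n-1}$; since the dimension bound $2^n - 4n + 4$ for $\Eig_0$ is sharp and inductively $\dim \Eig_0$ of a top-dimensional zero-divisor in $A_{n-1}$ is $2^{n-1} - 4(n-1) + 4 = 2^{n-1} - 4n + 8$, a dimension count suggests that a top-dimensional zero-divisor of $A_n$ should be expressible (up to automorphism and the reductions of Proposition~\ref{prop:reduce-C-perp} and Proposition~\ref{prop:eig-C_n}) in the form $(\alpha a', \beta a')$ with $\alpha, \beta \in \C_{n-1}$ orthogonal of equal norm and $a'$ a top-dimensional zero-divisor in $A_{n-1}$, or in a form to which Corollary~\ref{cor:gamma-max} applies. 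The real work is pinning down exactly this structural claim.

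Granting that structure, the computation of the spectrum is then a direct application of Corollary~\ref{cor:gamma-max}: with $\norm{\alpha \times \beta} = \tfrac12$, every eigenvalue of $(\alpha a', \beta a')$ is $0$, $1$, or $2\lambda$ for $\lambda$ an eigenvalue of $a'$. By the inductive hypothesis the eigenvalues of $a'$ are $0$ and $2^k$ for $0 \le k \le n-4$, so the eigenvalues of $(\alpha a', \beta a')$ are $0$, $1$, and $2 \cdot 2^k = 2^{k+1}$ for $0 \le k \le n-4$, i.e. $0$ and $2^j$ for $0 \le j \le n-3$ — exactly as claimed. For the multiplicities: part~(a) of Corollary~\ref{cor:gamma-max} gives multiplicity of $0$ equal to $2^{n-1} - 4 + \dim\Eig_0(a') = 2^{n-1} - 4 + (2^{n-1} - 4n + 8) = 2^n - 4n + 4$, confirming (a); part~(b) gives multiplicity of $1$ equal to $8 + \dim\Eig_{1/2}(a')$, and one must observe that $\tfrac12$ is never an eigenvalue of $a'$ (by Corollary~\ref{cor:eig-bound} all eigenvalues of $a'$ lie in $[0, 2^{n-4}]$ and inductively are powers of $2$ or $0$), so this is exactly $8$, confirming (b); finally, parts (c) and (d) give multiplicity $\dim\Eig_1(a') = 8$ contributing to $\Eig_2$ wait — more carefully, part~(c) gives multiplicity of $2$ equal to $\dim\Eig_1(a') - 4 = 8 - 4 = 4$, and part~(d) gives multiplicity of any other eigenvalue $2^j$ ($j \ge 2$) equal to $\dim\Eig_{2^{j-1}}(a') = 4$ by induction, confirming (c).

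The main obstacle is the structural classification of top-dimensional zero-divisors — establishing that, after applying the automorphism-group reductions available in each $A_n$ and the normalizations from Propositions~\ref{prop:reduce-C-perp} and~\ref{prop:eig-C_n}, a top-dimensional zero-divisor necessarily has the form $(\alpha a', \beta a')$ with $a'$ itself top-dimensional in $A_{n-1}$. I expect this requires combining the sharp bound $\dim\Eig_0(a) \le 2^n - 4n + 4$ from \cite[Prop.~9.10]{DDD} with Proposition~\ref{prop:eig-double} (to force the eigenvalues of the components to be controlled) and a careful dimension-counting argument using Proposition~\ref{prop:eig-alpha}/Corollary~\ref{cor:gamma-max}: the only way for $\Eig_0(a)$ of the doubled element to attain the extremal dimension is for $\alpha \times \beta$ to be maximal (norm $\tfrac12$), for $a = (\alpha a', \beta a')$ to have the split form with $a' \in \C_{n-1}^\perp$, and for $\Eig_0(a')$ to itself be extremal in $A_{n-1}$. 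One should also handle the degenerate branch $\alpha \times \beta = 0$ (Proposition~\ref{prop:gamma=0}) and check it cannot produce a top-dimensional zero-divisor, and dispose of the case where the component $a'$ is not orthogonal to $\C_{n-1}$ via Proposition~\ref{prop:reduce-C-perp}. Once the structure is in hand, everything else is the bookkeeping sketched above.
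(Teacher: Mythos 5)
Your overall plan — induction on $n$ using Corollary~\ref{cor:gamma-max} and the sharp bound on $\dim \Eig_0$ — is exactly the paper's approach, and your multiplicity bookkeeping (including the observation that $\tfrac12$ is never an eigenvalue of $a'$) is correct. The difference is in how the structural classification of top-dimensional zero-divisors is handled, and this is where the gap lies.

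The paper does not prove the structural input inside this theorem: it cites \cite[Prop.~15.6]{DDD}, which says that every unit-length top-dimensional zero-divisor of $A_n$ has the form $\tfrac{1}{\sqrt{2}}(a', \pm i_{n-1}a')$ with $a'$ a unit-length top-dimensional zero-divisor of $A_{n-1}$. You correctly identify this as the crux, but the sketch you offer for deriving it — ``automorphism-group reductions available in each $A_n$'' plus dimension counting with Proposition~\ref{prop:eig-alpha} — does not go through as stated: the paper explicitly warns (Section~\ref{subsctn:oct}) that $\Aut(A_n)$ does not act transitively on imaginary unit vectors for $n \geq 4$, so the kind of normalization you can do freely in $A_3$ is unavailable precisely in the range where you need it. Without that citation or a genuine replacement argument, the proof has a hole. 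Relatedly, the paper treats $n = 4$ as a second base case (via Proposition~\ref{prop:A4-zd-spec}), since the cited structural result requires a top-dimensional zero-divisor in $A_{n-1}$ and none exist in $A_3$; your proposal runs the induction starting from $n = 3$ only, which papers over that degeneracy. Your post-structure computation happens to produce the right numbers at $n = 4$, but the logical justification for the split form at $n = 4$ needs the separate argument the paper supplies.

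Once the structural form is granted, your application of Corollary~\ref{cor:gamma-max} and the resulting multiplicity calculation matches the paper's intent and is correct.
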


\begin{proof}
The proof is by induction, using Corollary \ref{cor:gamma-max}.
The base case $n = 3$ follows from the fact that $A_3$ is alternative.
The base case $n = 4$ follows from Proposition \ref{prop:A4-zd-spec}.

For the induction step,
recall from \cite[Prop.~15.6]{DDD} 
(see also \cite{DDDD})
that every unit length
top-dimensional zero-divisor
of $A_n$ is of the form $\frac{1}{\sqrt{2}}(a, \pm i_{n-1} a)$, where $a$
is a unit length top-dimensional zero-divisor of $A_{n-1}$.
Finally, apply Corollary \ref{cor:gamma-max}.
\end{proof}

\begin{thm}
\label{thm:eig-exist}
Let $n \geq 4$, and let $\lambda$ be any real number in the
interval $[0,2^{n-3}]$.  There exists an element of $A_n$
that possesses $\lambda$ as an eigenvalue.
\end{thm}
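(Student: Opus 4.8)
The plan is to prove by induction on $n$ the slightly stronger statement that for every $n \geq 4$ and every $\lambda \in [0, 2^{n-3}]$ there is an element of $\C_n^\perp$ having $\lambda$ as an eigenvalue; this implies the theorem, and strengthening it in this way is important because Proposition \ref{prop:eig-alpha}, the only tool for manufacturing new eigenvalues in $A_n$, needs its input to lie in $\C_{n-1}^\perp$. Recall that an element $(x,y)$ of $A_n = A_{n-1} \times A_{n-1}$ lies in $\C_n^\perp$ exactly when both $x$ and $y$ are imaginary. For the base case $n = 4$ I would take $a = i$ and $b = i\cos\theta + j\sin\theta$ in $A_3$: both are imaginary, so $(a,b)$ lies in $\C_4^\perp$, and for $\theta \in (0, \tfrac{\pi}{2}]$ the elements $a$ and $b$ are linearly independent, so Corollary \ref{cor:A4-spec} gives the eigenvalues of $(a,b)$ as $1$ and $1 \pm \sin\theta$. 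Letting $\theta$ run over $(0, \tfrac{\pi}{2}]$, and using Proposition \ref{prop:A4-lin-dep} at $\theta = 0$, every number in $[0, 2] = [0, 2^{4-3}]$ is attained.

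For the inductive step, assume the claim for $n - 1$, where $n \geq 5$. The key observation is that Proposition \ref{prop:eig-alpha}(d) converts an eigenvalue $\lambda$ of a unit vector $a \in \C_{n-1}^\perp$ into the eigenvalue $(1 + 2\norm{\alpha \times \beta})\lambda$ of the unit vector $(\alpha a, \beta a) \in \C_n^\perp$, where $\alpha, \beta \in \C_{n-1}$ satisfy $\norm{\alpha}^2 + \norm{\beta}^2 = 1$; moreover the factor $2\norm{\alpha \times \beta}$ can be given any value in $[0,1]$, for instance via $\alpha = \tfrac{1}{\sqrt 2}$ and $\beta = \tfrac{1}{\sqrt 2}(\cos\delta + i_{n-1}\sin\delta)$, which yields $2\norm{\alpha \times \beta} = \sin\delta$. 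Now fix a target $\mu \in [0, 2^{n-3}]$. If $\mu \leq 2^{n-4} = 2^{(n-1)-3}$, the induction hypothesis gives $a \in \C_{n-1}^\perp$ with eigenvalue $\mu$, and Proposition \ref{prop:gamma=0} (applied with a linearly dependent pair such as $\alpha = 1$, $\beta = 0$) shows $\mu$ is an eigenvalue of $(a, 0) \in \C_n^\perp$. If instead $2^{n-4} < \mu \leq 2^{n-3}$, the induction hypothesis gives a unit vector $a \in \C_{n-1}^\perp$ with eigenvalue $2^{n-4}$, and choosing $\delta$ with $\sin\delta = \mu/2^{n-4} - 1 \in (0, 1]$ makes $\mu = (1 + \sin\delta)\, 2^{n-4}$ an eigenvalue of $(\alpha a, \beta a) \in \C_n^\perp$ by Proposition \ref{prop:eig-alpha}(d). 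Since $[0, 2^{n-4}] \cup [2^{n-4}, 2^{n-3}] = [0, 2^{n-3}]$, the induction is complete.

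There is no genuinely hard step here; the proof is an assembly of earlier results. The one point requiring care — and the reason both for the strengthened statement and for splitting at $2^{n-4}$ — is that Proposition \ref{prop:eig-alpha}(d) can only multiply an available eigenvalue by a factor in $[1,2]$, so reaching the maximum value $2^{n-3}$ in $A_n$ forces us to already have $2^{n-4}$ in hand as an eigenvalue of an element of $\C_{n-1}^\perp$; the strengthened hypothesis provides exactly this, and it also ensures that every smaller value is realized by an element of $\C_{n-1}^\perp$, so that the elements $(\alpha a, \beta a)$ and $(a,0)$ are defined and lie in $\C_n^\perp$. Along the way one uses only the elementary facts that eigenvalues are unchanged under real scaling (so $a$ may be normalized) and that $\alpha a$ and $\beta a$ are imaginary whenever $a \in \C_{n-1}^\perp$ and $\alpha, \beta \in \C_{n-1}$.
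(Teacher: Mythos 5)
Your proof is correct, but it takes a genuinely different route from the paper's. The paper uses Theorem \ref{thm:spec-top} to produce a single element of $\C_n^\perp$ having both $0$ and $2^{n-3}$ as eigenvalues, and then applies the interpolation formula from Proposition \ref{prop:reduce-C-perp} twice: the family $a\cos\theta + \sin\theta$ has $\sin^2\theta$ as an eigenvalue (coming from the $0$-eigenspace of $a$), covering $[0,1]$, and also $\sin^2\theta + 2^{n-3}\cos^2\theta$ as an eigenvalue (from the $2^{n-3}$-eigenspace), covering $[1, 2^{n-3}]$. You instead run a fresh induction on $n$, manufacturing new eigenvalues at each stage via Proposition \ref{prop:eig-alpha}(d) with the continuous parameter $2\norm{\alpha\times\beta} = \sin\delta$, together with Proposition \ref{prop:gamma=0} to carry small eigenvalues up from $\C_{n-1}^\perp$ to $\C_n^\perp$. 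Your strengthened inductive hypothesis (realizing eigenvalues with elements of $\C_n^\perp$) is exactly the right bookkeeping to make the doubling construction applicable at the next level, and your split at $2^{n-4}$ correctly accounts for the fact that the multiplier $1 + 2\norm{\alpha\times\beta}$ only ranges over $[1,2]$; note also that since $2^{n-4} > 1$ for $n \geq 5$, the eigenspace $\Eig_{2^{n-4}}(a)$ is automatically orthogonal to $\llangle a, i_{n-1}\rrangle \subseteq \Eig_1(a)$, which is the condition actually used in the proof of Proposition \ref{prop:eig-alpha}. What each approach buys: the paper's proof is shorter because it reuses Theorem \ref{thm:spec-top}, which it needed anyway and which already encapsulates the hard inductive work; your proof sidesteps Theorem \ref{thm:spec-top} entirely (in particular it does not need the classification of top-dimensional zero-divisors from \cite[Prop.~15.6]{DDD}), at the cost of essentially re-running a parallel induction. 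Both are valid; yours is more self-contained but longer, and it also avoids the mild detour through Proposition \ref{prop:reduce-C-perp} because all of your constructed elements already lie in $\C_n^\perp$.
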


\begin{proof}
From Theorem \ref{thm:spec-top}, there exists an element $a$
in $\C_n^\perp$ that possesses both $0$ and $2^{n-3}$ as an eigenvalue.
Proposition \ref{prop:reduce-C-perp} shows that $a \cos \theta + \sin \theta$
is an element that possesses $\sin^2 \theta$ as an eigenvalue.  This
takes care of the case when $\lambda \leq 1$.

Now suppose that $\lambda \geq 1$.  There exists a value of $\theta$
for which $\sin^2\theta + 2^{n-3}\cos^2\theta = \lambda$.  
Proposition \ref{prop:reduce-C-perp} shows that $a \cos \theta + \sin \theta$
is an element that possesses $\lambda$ as an eigenvalue.
\end{proof}

%
%

\section{Some questions for further study}
\label{sctn:ques}

\begin{ques}
Relate the minimum eigenvalue of $(b,c)$ to the minimum eigenvalues
of $b$ and $c$.
\end{ques}

One might hope for an inequality for $\lambda^-_a$ similar
to the inequality given in Proposition \ref{prop:eig-double}.
However, beware that $(b,c)$ can be a zero-divisor, even if neither
$b$ nor $c$ are zero-divisors.  In other words, $\lambda^-_{(b,c)}$
can equal zero even if $\lambda^-_b$ and $\lambda^-_c$ are both non-zero.

\begin{ques}
Let $a$ belong to $A_n$.  Show that the dimension of $\Eig_1(a)$
cannot equal $2^n - 12$ or $2^n - 4$.
Show that all other multiples of 4 do occur.
\end{ques}

This guess is supported by computer calculations.  It is easy to
see that $2^n-4$ cannot be the dimension of $\Eig_1(a)$.  Just use
Proposition \ref{prop:eig-sum}.

Computer calculations indicate that the element
\[
\big( (0, t), (t + it, 1 + i + j) \big)
\]
of $A_5$ possesses $1$ as an eigenvalue, and the multiplicity is 4.  
See Section \ref{subsctn:oct} for an explanation of the notation.

\begin{ques}
Fix $n$.  Describe the space of all possible spectra of elements
in $A_n$.
\end{ques}

Results such as Theorem \ref{thm:spec-top}
suggest that the answer is complicated.  We don't even have a guess.
A possibly easier question is the following.

\begin{ques}
Fix $n$.  Describe the space of all possible spectra of zero-divisors
in $A_n$.
\end{ques}

\begin{ques}
Study the characteristic polynomial of elements of $A_n$.
\end{ques}


\bibliographystyle{amsalpha}

\end{document}